\documentclass[11pt]{amsart}
\baselineskip=7.0mm
\usepackage{amsmath}
\usepackage[dvips]{epsfig}
\setlength{\baselineskip}{1.09\baselineskip}

\theoremstyle{plain}
\newtheorem{theorem}{Theorem}[section]
\newtheorem{lemma}[theorem]{Lemma}
\newtheorem{prop}[theorem]{Proposition}
\newtheorem{cor}[theorem]{Corollary}

\theoremstyle{definition}
\newtheorem{remark}[theorem]{Remark}

\newtheorem{example}[theorem]{Example}
\numberwithin{equation}{section}

\setlength{\textwidth}{6.6in} \setlength{\textheight}{8.6in}
\hoffset=-0.6truein \voffset=-0.1truein

\begin{document}

\title[The Generalized Jang Equation]
{Existence and Blow-Up Behavior for Solutions\\
of the Generalized Jang Equation}

\author[Han]{Qing Han}
\address{Department of Mathematics\\
University of Notre Dame\\
Notre Dame, IN 46556} \email{qhan@nd.edu}

\address{Beijing International Center for Mathematical Research\\
Peking University\\
Beijing, 100871, China} \email{qhan@math.pku.edu.cn}

\author[Khuri]{Marcus Khuri}
\address{Department of Mathematics\\
Stony Brook University\\ Stony Brook, NY 11794}
\email{khuri@math.sunysb.edu}
\thanks{The first author acknowledges the support of NSF
Grant DMS-1105321. The second author acknowledges the support of
NSF Grant DMS-1007156 and a Sloan Research Fellowship. MSC Codes: 35Q75, 35J70, 83C99, 58J32}
\begin{abstract}
The generalized Jang equation was introduced in an attempt to prove
the Penrose inequality in the setting of general initial data
for the Einstein equations. In this paper we give an extensive
study of this equation, proving existence, regularity, and blow-up
results. In particular, precise asymptotics for the blow-up behavior
are given, and it is shown that blow-up solutions are not unique.
\end{abstract}
\maketitle

\section{Introduction}\label{sec1}

In 1978 the physicist P.-S. Jang \cite{Jang} introduced a quasilinear
elliptic equation in connection with the positive energy
conjecture of General Relativity.
Since then it has had numerous applications from quasilocal mass to
the existence of black holes. An excellent survey of these applications
may be found in \cite{AnderssonEichmairMetzger}.
However, Jang's equation has been shown to be unsuitable for an
application to the Penrose inequality \cite{MalecOMurchadha}, and
for this reason a generalization of this equation has
been proposed in \cite{BrayKhuri} and \cite{BrayKhuri1}. Further
applications may be found in \cite{DisconziKhuri} and \cite{KhuriWeinstein}. The purpose of
this paper is to give a complete analysis of the generalized
Jang equation, in a setting suitable for the Penrose inequality.

Consider an initial data set $(M,g,k)$ for the Einstein equations.
This consists of a Riemannian 3-manifold $M$ with metric $g$,
and a symmetric 2-tensor $k$, which satisfy the constraint
equations:
\begin{align}\label{1}
\begin{split}
2\mu &= R+(Trk)^{2}-|k|^{2},\\
J &= div(k+(Trk)g).
\end{split}
\end{align}
Here $\mu$ and $J$ are the energy and momentum densities of
the matter fields, respectively, and $R$ is the scalar curvature
of $g$. If all measured energy densities are nonnegative then
$\mu\geq|J|$, which will be referred to as the dominant energy
condition. This condition may be viewed as
a quasi nonnegativity of the scalar curvature. In fact in the
time symmetric ($k=0$) or maximal case ($Trk=0$), one does have
$R\geq 0$, and this condition plays a central role in the proof
of several results such as the positive energy theorem
\cite{SchoenYau0}, \cite{Witten} and the Penrose inequality
\cite{Bray}, \cite{HuiskenIlmanen}. Moreover, the primary difficulty
in establishing these
results for general initial data is the lack of nonnegative scalar
curvature. Thus, in the general case, one is motivated to deform
the initial data in an appropriate way (depending on the problem
at hand) such that the dominant energy condition yields nonnegativity
of the scalar curvature for the deformed metric. The deformation
chosen by Jang \cite{Jang} is given by $\overline{g}=g+df^{2}$,
for some function $f$ defined on $M$. Notice that $\overline{g}$
is the induced metric on the graph $\Sigma=\{t=f(x)\}$ in the
product 4-manifold $(M\times\mathbb{R},g+dt^{2})$. When
viewed in this way, the optimal Jang deformation is given by a
hypersurface $\Sigma\subset(M\times\mathbb{R},g+dt^{2})$ which
has scalar curvature that is as nonnegative as possible. It turns
out that this optimal deformation arises as the hypersurface
which satisfies Jang's equation
\begin{equation}\label{2}
H_{\Sigma}-Tr_{\Sigma}k=0,
\end{equation}
where $H_{\Sigma}$ is mean curvature and $Tr_{\Sigma}k$ denotes the
trace of $k$ (extended trivially to the 4-manifold) over $\Sigma$.
While this deformation does not necessarily yield
nonnegative scalar curvature, it is sufficient to make a further
conformal deformation to zero scalar curvature, as was carried out
by Schoen and Yau \cite{SchoenYau} in their proof of the positive
energy theorem.
Jang's approach also beautifully handles the case of equality for
the positive energy theorem. In the case of equality, the Jang
surface $(\Sigma,\overline{g})$ is isometric to Euclidean space
$(\mathbb{R}^{3},\delta)$, and hence $g=\delta-df^{2}$. It follows
that the map $x\mapsto (x,f(x))$ yields an isometric embedding of
$(M,g)$ into the Minkowski spacetime $\mathbb{M}^{4}$; it can also
be shown that the second
fundamental form of this embedding agrees with $k$, as desired. In
some sense, Jang's procedure seems to be tailor made for the
positive energy theorem. In fact it is so well calibrated to the
positive energy theorem,
that this method is rendered inapplicable to the Penrose inequality.
The reason for this is as follows. As explained in \cite{BrayKhuri}
and \cite{BrayKhuri1}, in the case of equality for the Penrose
inequality, the Jang
metric $\overline{g}$ should coincide with $g_{sc}$, the induced
metric on the $t=0$ slice of the Schwarzschild spacetime
$\mathbb{SC}^{4}$. It follows that $g=g_{sc}-df^{2}$, and the map
$x\mapsto (x,f(x))$ cannot yield an embedding of $(M,g)$ into
$\mathbb{SC}^{4}$, since the Schwarzschild metric has the warped
product structure $-\phi_{sc}^{2}dt^{2}+g_{sc}$. However, this
observation leads to a natural modification of the Jang approach
from which
the generalized Jang equation arises.

We now search for a hypersurface $\Sigma$, given by a graph $t=f(x)$,
inside the warped product space $(M\times\mathbb{R},g+\phi^{2}dt^{2})$,
where $\phi$ is a nonnegative function defined on $M$. For certain
applications
the choice of $\phi$ may depend on $f$, however in this paper we will
assume that $\phi$ is fixed independent of $f$. The goal or motive
which leads to the generalized Jang equation is the same as in the
classical case.
That is, we search for a hypersurface which has the most positive
scalar curvature that is possible. In order to have any chance of
obtaining
a positivity property for the scalar curvature, we would like the
Jang surface $\Sigma$ to
satisfy an
equation with the same structure as in \eqref{2}, namely
\begin{equation}\label{3}
H_{\Sigma}-Tr_{\Sigma}K=0,
\end{equation}
where again $H_{\Sigma}$ is mean curvature and $Tr_{\Sigma}K$
denotes the trace of $K$ over $\Sigma$. Here, however, $K$
represents a nontrivial extension of the initial data $k$ to
all of $M\times\mathbb{R}$. In (\ref{2}) $k$ is extended
trivially in that $k(\partial_{t},\cdot)=0$, but it turns out
that the trivial extension is not appropriate for a Jang surface
inside the warped product
metric. This is due to the fact that for applications it is
desirable for solutions of the generalized Jang equation to blow-up
at apparent horizons, and as is shown in \cite{BrayKhuri},
this is possible when $K$ is extended as follows:
\begin{align}\label{4}
\begin{split}
K(\partial_{x^{i}},\partial_{x^{j}})=K(\partial_{x^{j}},\partial_{x^{i}})
&=k(\partial_{x^{i}},\partial_{x^{j}})\text{ }\text{ }\text{
}\text{ for }\text{ }\text{ }\text{ }1\leq i,j\leq
3,\\
K(\partial_{x^{i}},\partial_{t})=K(\partial_{t},\partial_{x^{i}})
&= 0\text{ }\text{ }\text{ }\text{ }\text{ }\text{ }\text{
}\text{ }\text{ }\text{ }\text{ }\text{ }\text{ }\text{ }\text{
}\text{ for }\text{ }\text{ }\text{ }1\leq i\leq
3,\\
K(\partial_{t},\partial_{t})&=
\frac{\phi^{2}g(\nabla f,\nabla\phi)}{\sqrt{1+\phi^{2}|\nabla f|^{2}}},
\end{split}
\end{align}
where $x^{i}$, $i=1,2,3$, are local coordinates on $M$. Moreover
this particular extension also yields an optimal positivity property
for the scalar curvature of solutions to \eqref{3}.
Namely, a long calculation (\cite{BrayKhuri}, \cite{BrayKhuri1})
gives the following formula for the scalar curvature
of $\Sigma$ satisfying equation \eqref{3} with extension \eqref{4}:
\begin{equation}\label{5}
\overline{R}=2(\mu-J(w))+
|A-K|_{\Sigma}|^{2}+2|q|^{2}
-2\phi^{-1}\overline{div}(\phi q).
\end{equation}
Here $\overline{g}=g+\phi^{2}df^{2}$ and $A$ are the induced metric
and second fundamental form of $\Sigma$, respectively, $K|_{\Sigma}$ is
the restriction to $\Sigma$ of the extended tensor $K$, $\overline{div}$
is the divergence operator with respect to $\overline{g}$, and $q$ and
$w$ are
1-forms given by
\begin{equation*}\label{6}
w_{i}=\frac{\phi f_{i}}{\sqrt{1+\phi^{2}|\nabla f|^{2}}},\text{
}\text{ }\text{ }\text{ }
q_{i}=\frac{\phi f^{j}}{\sqrt{1+\phi^{2}|\nabla f|^{2}}}(A_{ij}-(K|_{\Sigma})_{ij}),
\end{equation*}
with $f^{j}=g^{ij}f_{i}$. If the dominant energy condition is satisfied,
then all terms appearing on the right-hand side of \eqref{5} are
nonnegative, except possibly the last term.
However the last term has a special structure, and in many applications
it is clear that a specific choice
of $\phi$ will allow one to `integrate away' this divergence term, so
that in effect the scalar curvature
is weakly nonnegative (that is, nonnegative when integrated against
certain functions).

When the tensor $k$ is extended according to \eqref{4}, we will
refer to equation \eqref{3} as the generalized Jang equation, and
the solution $\Sigma=\{t=f(x)\}$ will be called the Jang surface.
In local coordinates the generalized Jang equation takes the
following form:
\begin{equation}\label{7}
\left(g^{ij}-\frac{\phi^{2}f^{i}f^{j}}{1+\phi^{2}|\nabla f|^{2}}\right)
\left(\frac{\phi\nabla_{ij}f+\phi_{i}f_{j}+\phi_{j}f_{i}}
{\sqrt{1+\phi^{2}|\nabla f|^{2}}}-k_{ij}\right)=0.
\end{equation}
This is a quasilinear elliptic equation, which degenerates
when $f$ blows-up or $\phi=0$. When $\phi=1$ this reduces to the classical
Jang equation studied by Schoen and Yau \cite{SchoenYau}.

Our study of \eqref{7} is naturally divided into two steps. In the first step, the setting consists of
a complete initial data set on which the function $\phi$ is strictly positive.
We will show that an analogue of the existence and
regularity result, as obtained by Schoen and Yau \cite{SchoenYau} for the classical Jang equation, holds in this case.
The proof follows the same general ideas present in \cite{SchoenYau}
with appropriate modification. The primary difference is that (especially for
applications) it is important
to track precisely how the estimates depend on $\phi$ and its
derivatives. These estimates will also be helpful
for the case in which $\phi$
is allowed to vanish. In the second and primary step, the setting consists of an initial data set with outermost
apparent horizon boundary along which $\phi$ vanishes. Our study in this case constitutes the main focus and purpose of this paper.
We will establish the existence of solutions which blow up
along the boundary, and will give precise descriptions of the blow-up rates by constructing upper and lower barriers. The fact that $\phi$
vanishes on the boundary adds an extra degeneracy to equation \eqref{7}, which makes the analysis in this case much more difficult.
The existence of blow-up solutions for the classical Jang equation has been proven
by Metzger in \cite{Metzger} (see also \cite{EichmairMetzger}).

In this paper we will always assume that the initial data are
asymptotically flat (with one end),
so that at spatial infinity the metric and extrinsic curvature
satisfy the following fall-off conditions
\begin{equation*}\label{8}
|\partial^{m}(g_{ij}-\delta_{ij})|=O(|x|^{-m-1}),\text{ }\text{
}\text{ }\text{ }|\partial^{m}k_{ij}|=O(|x|^{-m-2}),\text{ }\text{
}\text{ }m=0,1,2,\text{ }\text{ }\text{ as }\text{ }\text{
}|x|\rightarrow\infty.
\end{equation*}
Moreover if the initial data has boundary, it will be assumed to
consist of an outermost apparent horizon.
To explain this more precisely, recall that the strength of the
gravitational field in the vicinity of a
2-surface $S\subset M$ may be measured by the null expansions
\begin{equation}\label{9}
\theta_{\pm}:=H_{S}\pm Tr_{S}k,
\end{equation}
where $H_{S}$ is the mean curvature with respect to the unit
outward normal (pointing towards spatial infinity). The null
expansions measure the rate of change of area for a shell of light
emitted by the surface in the outward future direction
($\theta_{+}$), and outward past direction ($\theta_{-}$).  Thus
the gravitational field is interpreted as being strong near
$S$ if $\theta_{+}< 0$ or $\theta_{-}< 0$, in which case
$S$ is referred to as a future (past) trapped surface.
Future (past) apparent horizons arise as boundaries of future
(past) trapped regions and satisfy the equation $\theta_{+}=0$
($\theta_{-}=0$). In the setting of the initial data set
formulation of the Penrose inequality, apparent horizons take the
place of event horizons, in that the area of the event horizon is
replaced by the least area required to enclose the outermost apparent
horizon.  Here, an outermost future (past) apparent horizon refers to a future (past) apparent
horizon outside of which there is no other apparent horizon; such
a horizon may have several components, each having spherical
topology (\cite{Galloway}, \cite{GallowaySchoen}). In this paper we will refer to the union of the
outermost future apparent horizon and outermost past apparent horizon as the \textit{outermost apparent horizon},
and we will assume that past and future horizon components do \textit{not} intersect.
For the sake of applications it is desirable for solutions of the
generalized Jang equation to blow-up to $+\infty$ ($-\infty$) in
the form of a cylinder over
the future (past) components of an outermost apparent horizon, and
to vanish at spatial infinity. The fall-off rate for the solution
of the generalized
Jang equation depends on the asymptotics of the warping factor,
however here we will always assume that
\begin{equation}\label{10}
\phi(x)=1+\frac{C}{|x|}+O\left(\frac{1}{|x|^{2}}\right)
\text{ }\text{
}\text{ as }\text{ }\text{ }|x|\rightarrow\infty
\end{equation}
for some constant $C$, which yields
\begin{equation}\label{11}
|\nabla^{m}f|(x)=O(|x|^{-\frac{1}{2}-m})\text{ }\text{ }\text{
as }\text{ }\text{ }|x|\rightarrow\infty,\text{ }\text{ }\text{
}\text{ }m=0,1,2.
\end{equation}
These asymptotics ensure that the ADM energy of the Jang surface
agrees with that of the initial data.

As mentioned above, our primary goal is to study \eqref{7} in the setting of initial data with outermost
horizon boundary along which $\phi$ vanishes, and to describe the blow-up behavior of its solutions
near the boundary. To this end, we will need to know the asymptotics of
the warping factor $\phi$ and of the null expansions. That is, we assume that the warping factor
and the null expansions vanish in a
controlled way at the horizon. Set $\tau(x)=dist(x,\partial M)$.
In a neighborhood of $\partial M$, we impose the following structure
condition
\begin{equation}\label{14}
\phi(x)=\tau^{b}(x)\widetilde{\phi}(x),
\end{equation}
for some smooth (up to the boundary) strictly positive function $\widetilde{\phi}$,
with $b\geq 0$.
Next, we let $S_{\tau}$ denote level sets of
the geodesic flow emanating from $\partial M$, that is, each point
of $S_{\tau}$ is of distance $\tau$ from the boundary. Furthermore
decompose $\partial M=\partial_{+}M\cup\partial_{-}M$, where $\partial_{+}M$
($\partial_{-}M$) denotes the future (past) apparent horizon components.
We then stipulate that near $\partial_{\pm} M$
\begin{equation}\label{15}
|\theta_{\pm}(S_{\tau})|\leq c\tau^{l}
\end{equation}
for some constants $l,c>0$. Notice that when the initial data are smooth up to the boundary, as will always be
the case in this paper, $l\geq 1$. Whether or not $f$ actually blows up and approximates a cylinder over the horizon is highly
dependent on the relationship between the vanishing rates of
$\theta_{\pm}$ and $\phi$, that is, the relation between $l$ and $b$.
In some cases we require a more restrictive condition
\begin{equation}\label{12}
c^{-1}\tau^{l}\leq\theta_{\pm}(S_{\tau})\leq c\tau^{l},
\end{equation}
with constants $l,c>0$.

The main result in this paper is the following theorem.

\begin{theorem}\label{thm4}
Suppose that $(M,g,k)$ is a smooth, asymptotically flat initial data
set, with outermost apparent horizon boundary $\partial M=\partial_{+}M\cup\partial_{-}M$.
Suppose further that $\phi$ is smooth, strictly positive away from $\partial M$, and satisfies
\eqref{10} and \eqref{14} for some $b\geq 0$.

$\operatorname{(1)}$ If $-\frac{l-1}2\le b<\frac{l+1}2$ and \eqref{12} is valid for some $l\geq 1$,
then there exists a smooth solution $f$ of the generalized
Jang equation \eqref{7}, satisfying \eqref{11}, and such that
$f(x)\rightarrow\pm\infty$ as $x\rightarrow\partial_{\pm}M$. More
precisely, in a neighborhood of $\partial_{\pm}M$
\begin{align}\label{16a}
\begin{split}
\alpha^{-1}\tau^{-b-\frac{l-1}{2}}+\beta^{-1} &\leq \pm f \leq \alpha\tau^{-b-\frac{l-1}{2}}+\beta\text{ }\text{ }\textit{ if }\text{ }\text{ }
-\frac{l-1}{2}< b<\frac{l+1}{2},\\
-\alpha^{-1}\log\tau+\beta^{-1} &\leq \pm f
\leq -\alpha\log\tau+\beta\text{ }\text{ }\textit{ if }\text{ }\text{ }b=-\frac{l-1}{2},
\end{split}
\end{align}
for some positive constants $\alpha$ and $\beta$.

$\operatorname{(2)}$ If $\frac{1}2\le b<\frac{l+1}2$ and \eqref{15} is
valid for some $l\geq 1$, then there exists a smooth solution $f$ of the generalized
Jang equation \eqref{7}, satisfying \eqref{11}, and such that
$f(x)\rightarrow\pm\infty$ as $x\rightarrow\partial_{\pm}M$. More
precisely, in a neighborhood of $\partial_{\pm}M$
\begin{align}\label{16b}
\begin{split}
\alpha^{-1}\tau^{1-2b}+\beta^{-1} &\leq \pm f
\leq \alpha\tau^{1-2b}+\beta\text{ }\text{ }\textit{ if }\text{ }\text{ }
\frac{1}{2}< b<\frac{l+1}{2},\\
-\alpha^{-1}\log\tau+\beta^{-1} &\leq \pm f
\leq -\alpha\log\tau+\beta\text{ }\text{ }\textit{ if }\text{ }\text{ }b=\frac{1}{2},
\end{split}
\end{align}
for some positive constants $\alpha$ and $\beta$.
\end{theorem}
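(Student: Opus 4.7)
The plan is to adapt the Schoen--Yau regularization scheme for the classical Jang equation together with Metzger's barrier argument for blow-up at apparent horizons to the doubly-degenerate setting (degeneracy arising both from $|\nabla f|\to\infty$ and from $\phi\to 0$). Broadly, I approximate $M$ by a nested family of capped domains $M_\sigma = \{\tau > \sigma\}$ on which $\phi$ is strictly positive; on each $M_\sigma$ I solve the capillary regularization $H_\Sigma - Tr_\Sigma K = \epsilon f$ with Dirichlet data of opposite signs on $\partial_\pm M \cap \{\tau = \sigma\}$, invoking the first step of the paper to get smooth solutions $f_{\sigma,\epsilon}$ with estimates explicitly controlled in $\phi$. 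I then construct explicit radial barriers near $\partial_\pm M$ realizing the asserted blow-up rates, and a decay barrier at infinity from \eqref{10}, trap $f_{\sigma,\epsilon}$ between them uniformly in $\sigma,\epsilon$ by the comparison principle, and finally pass to the limit $\sigma,\epsilon\to 0$ using interior Schauder theory.

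The heart of the construction is the barrier analysis. In Fermi coordinates $(\tau,\vartheta)$ near $\partial M$ the metric splits as $g = d\tau^2 + \gamma_\tau$, and substituting a radial ansatz $f = h(\tau)$ into \eqref{7} yields an ODE whose principal part, in the regime $\phi|h'|\gg 1$, reduces to $\operatorname{sgn}(h')\,H_{S_\tau} - Tr_{S_\tau} k = \mp \theta_\pm$, consistent with the Jang surface approaching a cylinder over $S_\tau$. The sub-leading corrections come from the $\phi h''$ and $\phi_\tau h'$ contributions; under $\phi = \tau^b\widetilde\phi$ with $\theta_\pm$ of order $\tau^l$, a power ansatz $h = \alpha \tau^{-p}$ produces the competing orders
\begin{equation*}
\frac{h''}{\phi^{2}(h')^{3}}\sim \tau^{2p+1-2b},\qquad \frac{\phi_\tau}{\phi^{2}(h')^{2}}\sim \tau^{2p+1-b},\qquad \theta_\pm\sim \tau^{l}.
\end{equation*}
Matching the first against $\theta_\pm$ produces $p = b + (l-1)/2$ of case (1), while a different dominant balance yields the exponent $p = 2b-1$ of case (2). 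The logarithmic endpoints $b=-(l-1)/2$ and $b = 1/2$ are exactly the confluence of exponents that collapses the power ansatz to $-\alpha\log\tau$. The constant $\alpha$ is chosen to fix the sign of the leading coefficient of the ODE (so $\pm h$ becomes a sub- or super-solution), and an additive $\beta$ absorbs the lower-order remainders; in case (1) the two-sided condition \eqref{12} is exactly what is required to get matching upper and lower barriers at the common scale $\tau^{-b-(l-1)/2}$.

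With the barriers in place, maximum-principle comparison gives $\alpha^{-1}\tau^{-p}+\beta^{-1}\leq \pm f_{\sigma,\epsilon}\leq \alpha\tau^{-p}+\beta$ (or its logarithmic analogue) on a fixed tubular neighborhood of $\partial_\pm M$, while the asymptotic barrier from \eqref{10} gives the decay \eqref{11}. The interior gradient and $C^{2,\alpha}$ bounds from the first step apply uniformly on compact subsets of $M\setminus\partial M$, so a diagonal extraction yields a smooth limit $f$ solving \eqref{7} on $M$ and satisfying \eqref{11} together with \eqref{16a}--\eqref{16b}.

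I expect the main obstacle to be the lower barrier in case (2). Without a lower bound on $\theta_\pm$, there is no direct mechanism forcing $f$ to diverge by comparison with $\theta_\pm$; instead the argument has to exploit the degeneracy of the coefficient $\phi^{2} f^{i} f^{j}/(1+\phi^{2}|\nabla f|^{2})$ in \eqref{7} combined with $b\ge \tfrac{1}{2}$. Concretely, one has to show that any graph solving \eqref{7} with sufficiently large data on $\{\tau=\sigma\}$ is pushed to infinity at rate at least $\tau^{1-2b}$, which requires regrouping the correction terms in the ODE so that $\phi_\tau/\phi$ competes with $h''$ rather than with $\theta_\pm$, and a careful sign analysis at the critical exponent $b=1/2$. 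Treating the two logarithmic endpoints and verifying that errors produced by neglecting angular dependence can be absorbed into $\beta$ are additional technical but less fundamental steps.
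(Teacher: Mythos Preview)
Your barrier heuristics and the identification of the two exponents $p=b+\tfrac{l-1}{2}$ and $p=2b-1$ are correct and agree with the paper. The overall architecture (regularize, compare with radial barriers, pass to the limit) is also the paper's, but the implementation differs in two places, and the second is a genuine gap.

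First, the paper does \emph{not} truncate the domain. It regularizes $\phi$ instead, setting $\phi_\delta=(\tau+\delta)^b\widetilde{\phi}$ so that the equation is uniformly elliptic up to $\partial M$, and then uses Metzger's extension $\widetilde{M}\supset M$ by a foliation with $\theta_+<0$ to force the $\varepsilon$-regularized solutions to blow up at $\partial M$ (the outermost-horizon hypothesis pins the blow-up cylinder exactly at $\partial M$). Your domain-truncation scheme with Dirichlet data on $\{\tau=\sigma\}$ is a reasonable alternative, but you would need to specify the data carefully (it must diverge at the correct rate as $\sigma\to0$ for the lower-barrier comparison to apply), and you would lose the clean mechanism that identifies the blow-up locus with the outermost horizon.

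Second, and more importantly, your proposal treats cases (1) and (2) as arising from a \emph{single} approximation scheme with two possible barrier balances. In the paper they do not: they are two genuinely different solutions (indeed, for $\tfrac12\le b<\tfrac{l+1}{2}$ both exist simultaneously), produced by two different limiting procedures. The solution with asymptotics \eqref{16a} is the limit of blow-up solutions $f_\delta$ of the $\phi_\delta$-equation, trapped below by the translated power subsolution $\underline{\psi}_\delta(\tau)=\underline{\psi}(\tau+\delta)$, which is finite at $\tau=0$ so that the comparison closes (this is exactly the point you would have to reproduce with your Dirichlet data). The solution with asymptotics \eqref{16b} is instead the limit of solutions $h_\delta$ of the $\phi_\delta$-equation with \emph{finite} prescribed boundary values $h_\delta|_{\partial M}=T_\delta\sim\delta^{1-2b}$. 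The lower barrier here is not a power: it is obtained by integrating the ODE
\[
\frac{1}{\tau^{2b}\chi'^{2}}\Bigl(\frac{\chi''}{\chi'}+\frac{2b}{\tau}\Bigr)=-\lambda\tau^{\widetilde{l}},\qquad 2b-1<\widetilde{l}\le\min(l,2b),
\]
whose solution has $-\chi'\sim\Lambda^{-1}\tau^{-2b}$ for a nonzero integration constant $\Lambda$; it is precisely this constant that selects the rate $\tau^{1-2b}$ rather than $\tau^{-b-(l-1)/2}$, and taking $\lambda$ large is what beats the (only one-sided) $\theta_\pm$ term under \eqref{15}. Your description of ``regrouping so that $\phi_\tau/\phi$ competes with $h''$'' points in this direction but does not yet contain the ODE construction or the recognition that a separate Dirichlet problem is needed.
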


\begin{remark}
The hypothesis $b\geq-\frac{l-1}{2}$ is trivially satisfied under the primary assumptions of this paper, that is $b\geq 0$ and $l\geq 1$. However it is believed
that this theorem, and others having this hypothesis, continue to hold even when these primary assumptions are relaxed. For this reason, the inequality $b\geq-\frac{l-1}{2}$
is included in the statement of such results.
\end{remark}

We note that
there are at least two solutions with different asymptotics when
$\frac{1}{2}\leq b<\frac{l+1}{2}$. The estimates \eqref{16a}
and \eqref{16b} are established by constructing appropriate
sub and super solutions, however these estimates may not hold
when the particular inequalities between $l$ and $b$ are not satisfied.
Furthermore, we exhibit by example, a solution which does not have
cylindrical asymptotics near the outermost apparent horizon when $b=(l+1)/2$.

It should be pointed out that the lower bounds in \eqref{16a} are new even for the classical Jang equation, that is,
when $\phi=1$.

Theorem \ref{thm4} establishes the existence of solutions to the generalized Jang
equation which possess appropriate behavior for application to the
Penrose inequality. In particular, for an appropriate choice of $\phi$,
the blow-up rates \eqref{16b} show that the Jang
surface is a manifold
with boundary, and that the boundary is a minimal surface.
This allows techniques from the time symmetric proof of the Penrose
inequality to be applied, and is different from the behavior of the
blow-up solutions of the classical Jang equation which possess an
infinitely long neck at the horizon.

The assumption on $\phi$, that it is positive away from the boundary and satisfies \eqref{14}, may seem restrictive if one hopes to apply our
result to the Penrose inequality as described in \cite{BrayKhuri1}. However this is not the case. Three proposals for a coupling of the generalized Jang
equation with other equations were outlined in \cite{BrayKhuri1}, and it is the coupling with Bray's conformal flow which produces a warping factor $\phi$ that
satisfies our hypotheses. The one missing ingredient here, in this paper, is that $\phi$ is fixed, and does not depend on $f$. Nevertheless, Theorem \ref{thm4} provides the
important first step in the difficult problem of analyzing the coupled Jang/conformal flow system. More precisely, given $\phi_{0}$ satisfying \eqref{14}, Theorem \ref{thm4} produces $f_{0}$,
from which we may construct the Jang metric $\overline{g}_{0}=g+\phi_{0}^{2}df_{0}^{2}$. According to \cite{BrayKhuri1}, Bray's conformal flow with respect to $\overline{g}_{0}$
produces a new warping factor $\phi_{1}$, which will also satisfy the hypotheses of Theorem \ref{thm4}. We may then continue this procedure indefinitely to produce
sequences of functions $\{\phi_{i}\}$, $\{f_{i}\}$. The second and final step should entail establishing appropriate a priori estimates, to show that a subsequence converges to yield
a solution of the coupled system of equations. This last step will be quite involved.

This paper is organized as follows. In Section \ref{sec2} we derive
local a priori estimates, and in Section \ref{sec3} an important
Harnack inequality is proved. These two results are then combined with
further global estimates to establish the existence of solutions on complete initial data, in Section
\ref{sec4}. In Section \ref{sec5} we produce the blow-up solutions
of Theorem \ref{thm4}, and construct appropriate super solutions.
Lastly, the more difficult sub solutions are constructed in Section \ref{sec6}
and Theorem \ref{thm4} is proven.

\section{A Priori Estimates for the Generalized Jang Equation}\label{sec2}

In this section we begin the proof of existence for the generalized
Jang equation. Our first goal is to prove the same existence and
regularity result as obtained by Schoen and Yau \cite{SchoenYau},
for arbitrary positive warping factor $\phi$. This is fairly straight forward,
in that the methods of Schoen and Yau still apply with little
modification. The primary difference is that (especially for
applications) it is important
to track precisely how the estimates depend on $\phi$ and its
derivatives. The estimates of this section will also be helpful
for the case in which $\phi$
is allowed to vanish, but for now we assume that $\phi$ is strictly positive.

Let us set the notation. Consider a hypersurface $\Sigma\subset(M\times\mathbb{R},g+\phi^{2}dt^{2})$ given by the
graph of a function $t=f(x)$.
This surface has
induced metric $\overline{g}=g+\phi^{2}df^{2}$, and inverse matrix
\begin{equation*}
\overline{g}^{ij}=g^{ij}-\frac{\phi^{2}f^{i}f^{j}}{1+\phi^{2}|\nabla f|^{2}},
\end{equation*}
where $f^{i}=g^{ij}f_{j}$ and $f_{i}=\partial_{x^{i}}f$,
with $x^{i}$, $i=1,2,3$, local coordinates on $M$. Throughout the paper,
a bar will be placed over
geometric quantities associated with $\Sigma$. For instance
$\overline{\nabla}$ will denote the induced connection on $\Sigma$,
whereas $\nabla$ will denote the connection on the ambient manifold $(M\times\mathbb{R},g+\phi^{2}dt^{2})$. The unit normal to $\Sigma$ is given by
\begin{equation*}
N=\frac{\nabla f-\phi^{-2}\partial_{t}}{\sqrt{\phi^{-2}+|\nabla f|^{2}}},
\end{equation*}
and $X_{i}=\partial_{x^{i}}+f_{i}\partial_{t}$, $i=1,2,3$
form a basis for the tangent space. Moreover a calculation (\cite{BrayKhuri}, \cite{BrayKhuri1}) shows that the
second fundamental form of $\Sigma$ is given by
\begin{equation}\label{eq-SecondFF}
A_{ij}=\langle\nabla_{X_{i}}N,X_{j}\rangle=
\frac{\phi\nabla_{ij}f+\phi_{i}f_{j}+\phi_{j}f_{i}
+\phi^{2}f^{m}\phi_{m}f_{i}f_{j}}{\sqrt{1+\phi^{2}|\nabla f|^{2}}}.
\end{equation}
In this section it will be assumed that $\Sigma$ satisfies an equation of the form
\begin{equation}\label{eq-BasicEquation}
H-TrK=F,
\end{equation}
for some function $F$ to be specified, where $H=\overline{g}^{ij}A_{ij}$
is mean curvature and $TrK$ denotes the trace of $K$ over $\Sigma$. In
what follows, as in \eqref{eq-BasicEquation}, we will drop
the subscript $\Sigma$ when denoting the mean curvature and trace
operations with respect to $\Sigma$.

We first estimate the mean curvature and its derivatives.

\begin{lemma}\label{lemma-MeanCurvature}
Suppose that the surface $\Sigma$ satisfies \eqref{eq-BasicEquation}.
Then
\begin{equation}\label{eq-EstimateMeanCurv}
|H|\leq |F|+c(1+|\nabla\log\phi|),
\end{equation}
and
\begin{equation}\label{eq-EstimateGradientMeanCurv}
|\overline{\nabla}H|\leq c(1+|\nabla F|+|\nabla\log\phi|^{2}+\phi^{-1}|\nabla^{2}\phi|+|A|+|A||\nabla\log\phi|),
\end{equation}
where $c$ is a universal constant.
\end{lemma}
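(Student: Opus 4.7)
The plan is to use the PDE $H = F + TrK$ and reduce both estimates to controlling $TrK$ and its tangential derivative, since the contributions from $F$ and $\overline{\nabla}F$ are immediate. The key preliminary computation is to write $TrK$ explicitly in local coordinates. Using the tangent basis $X_i = \partial_{x^i} + f_i \partial_t$, the extension formulas \eqref{4} give
\[
K(X_i,X_j) = k_{ij} + f_i f_j \cdot \frac{\phi^2 g(\nabla f,\nabla\phi)}{\sqrt{1+\phi^2|\nabla f|^2}},
\]
so that
\[
TrK = \overline{g}^{ij} k_{ij} + \overline{g}^{ij} f_i f_j \cdot \frac{\phi^2 g(\nabla f,\nabla\phi)}{\sqrt{1+\phi^2|\nabla f|^2}}.
\]
A short computation shows $\overline{g}^{ij}f_i f_j = |\nabla f|^2/(1+\phi^2|\nabla f|^2)$, and the eigenvalues of $\overline{g}^{ij}$ (viewed as an endomorphism of $TM$) all lie in $(0,1]$.

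For the first estimate, the bound on the eigenvalues of $\overline{g}^{ij}$ yields $|\overline{g}^{ij}k_{ij}| \leq c|k|$, which contributes only a universal constant since $k$ is fixed. For the second term, factor as $\frac{(\phi|\nabla f|)^3}{(1+\phi^2|\nabla f|^2)^{3/2}} \cdot \frac{|\nabla\phi|}{\phi}$, and use $\phi|\nabla f|/\sqrt{1+\phi^2|\nabla f|^2} \leq 1$ to absorb the dimensionless ratio into a constant, producing the bound $c|\nabla\log\phi|$. Combining with $|H| \leq |F|+|TrK|$ gives \eqref{eq-EstimateMeanCurv}.

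For the second estimate, differentiate $H = F+TrK$ tangentially. The term $\overline{\nabla}F$ contributes at most $|\nabla F|$. In $\overline{\nabla}(TrK)$ there are four types of derivatives to track: (i) derivatives of $k_{ij}$, which are bounded by constants; (ii) derivatives of $\overline{g}^{ij}$, which involve $\nabla^2 f$, $\nabla f$ and $\nabla\phi$ through the explicit formula; (iii) derivatives of $\phi$ and $\nabla\phi$ appearing in $K(\partial_t,\partial_t)$, which produce the $\phi^{-1}|\nabla^2\phi|$ and $|\nabla\log\phi|^2$ contributions; and (iv) a $\nabla f \cdot \nabla^2 f$ factor coming from differentiating the denominators $\sqrt{1+\phi^2|\nabla f|^2}$. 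The essential step is to use the second fundamental form identity \eqref{eq-SecondFF} to solve for $\phi\nabla_{ij}f$ in terms of $A_{ij}$ and lower order data:
\[
\phi\nabla_{ij}f = A_{ij}\sqrt{1+\phi^2|\nabla f|^2} - \phi_i f_j - \phi_j f_i - \phi^2 f^m\phi_m f_i f_j,
\]
so that $|\nabla^2 f|$ is controlled by $\phi^{-1}|A|\sqrt{1+\phi^2|\nabla f|^2}$ plus terms of the form $|\nabla\log\phi|(1+\phi|\nabla f|)$. Every time $\nabla^2 f$ appears multiplied by the universally bounded quantities $\phi|\nabla f|/\sqrt{1+\phi^2|\nabla f|^2}$ or similar ratios, the $\sqrt{1+\phi^2|\nabla f|^2}$ factor cancels, leaving contributions of the form $|A|$ and $|A||\nabla\log\phi|$.

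The main bookkeeping obstacle is ensuring that the $|\nabla^2 f|$ substitution produces exactly the stated combination $|A|(1+|\nabla\log\phi|)$ rather than worse powers of $|\nabla\log\phi|$: the square of $|\nabla\log\phi|$ is allowed, but no $|A||\nabla\log\phi|^2$ can arise. This works out because $|\nabla^2 f|$ enters $\overline{\nabla}(TrK)$ only linearly, and the $A$-contribution is multiplied by quantities proportional to $\nabla\phi/\phi$ at most to the first power (the $\phi^{-1}\nabla^2\phi$ terms come from a \emph{different} source, namely differentiating $\nabla\phi$ directly in $K(\partial_t,\partial_t)$). Assembling all the pieces yields \eqref{eq-EstimateGradientMeanCurv}.
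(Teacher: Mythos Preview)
Your proposal is correct and follows essentially the same approach as the paper: write $H=F+TrK$, expand $TrK=\overline{g}^{ij}k_{ij}+\phi^{2}(\phi^{l}f_{l})|\nabla f|^{2}(1+\phi^{2}|\nabla f|^{2})^{-3/2}$ to get \eqref{eq-EstimateMeanCurv}, then differentiate this expression and substitute $\nabla_{ij}f$ via \eqref{eq-SecondFF} to obtain \eqref{eq-EstimateGradientMeanCurv}. The paper carries out the differentiation explicitly and records the resulting formula for $\partial_{a}H$ after the cancellations, whereas you give a structural bookkeeping argument, but the underlying strategy is identical.
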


\begin{proof}
First, by \eqref{eq-BasicEquation}, we have
\begin{equation}\label{eq-MeanCurv}
H=\overline{g}^{ij}\left(k_{ij}
+\frac{\phi^{2}(\phi^{l}f_{l})f_{i}f_{j}}{\sqrt{1+\phi^{2}|\nabla f|^{2}}}\right)
+F
=\overline{g}^{ij}k_{ij}+\frac{\phi^{2}(\phi^{l}f_{l})
|\nabla f|^{2}}{(1+\phi^{2}|\nabla f|^{2})^{3/2}}+F.
\end{equation}
This implies \eqref{eq-EstimateMeanCurv} easily.

Next, a simple differentiation yields
\begin{align*}
\partial_{a}H =& \overline{g}^{ij}k_{ij;a}+\partial_{a}F
+\left(\frac{-2\phi\phi_{a}f^{i}f^{j}
-2\phi^{2}f^{i}\nabla_{a}f^{j}}{1+\phi^{2}|\nabla f|^{2}}\right)k_{ij}\\
& +\frac{\phi^{2}f^{i}f^{j}}{(1+\phi^{2}|\nabla f|^{2})^{2}}(2\phi\phi_{a}
|\nabla f|^{2}+2\phi^{2}f^{n}\nabla_{na}f)k_{ij}
+\frac{2\phi^{2}(\phi^{m}f_{m})f^{n}\nabla_{na}f}{(1+\phi^{2}|\nabla f|^{2})^{3/2}}\\
& +\frac{2\phi\phi_{a}(\phi^{n}f_{n})|\nabla f|^{2}
+\phi^{2}(\nabla_{a}\phi^{n})f_{n}|\nabla f|^{2}
+\phi^{2}\phi^{n}\nabla_{na}f|\nabla f|^{2}}
{(1+\phi^{2}|\nabla f|^{2})^{3/2}}\\
& -\frac{3}{2}\frac{\phi^{2}(\phi^{m}f_{m})|\nabla f|^{2}}{(1+\phi^{2}|\nabla f|^{2})^{5/2}}(2\phi\phi_{a}|\nabla f|^{2}+2\phi^{2}f^{n}\nabla_{na}f).
\end{align*}
However, by \eqref{eq-SecondFF}
\begin{equation*}
\nabla_{ij}f=\phi^{-1}(\sqrt{1+\phi^{2}|\nabla f|^{2}}A_{ij}-\phi_{i}f_{j}-\phi_{j}f_{i}-\phi^{2}(\phi^{m}f_{m})f_{i}f_{j}).
\end{equation*}
Thus, after substitution, some cancelations occur and we obtain
\begin{align*}
\partial_{a}H =& \overline{g}^{ij}k_{ij;a}+\partial_{a}F
+\left(\frac{2\phi\phi_{i}f_{j}f_{a}}{1+\phi^{2}|\nabla f|^{2}}
-\frac{2\phi f_{i}A_{ja}}{\sqrt{1+\phi^{2}|\nabla f|^{2}}}+\frac{2\phi^{3}f_{i}f_{j}f^{n}A_{an}}{(1+\phi^{2}|\nabla f|^{2})^{3/2}}\right)k^{ij}\\
& +\frac{\phi^{2}|\nabla f|^{2}f^{n}\nabla_{an}\phi}{(1+\phi^{2}|\nabla f|^{2})^{3/2}}
-\frac{2\phi(\phi^{m}f_{m})^{2}f_{a}}{(1+\phi^{2}|\nabla f|^{2})^{3/2}}
-\frac{\phi|\nabla f|^{2}|\nabla\phi|^{2}f_{a}}{(1+\phi^{2}|\nabla f|^{2})^{3/2}}-\frac{\phi|\nabla f|^{2}(\phi^{m}f_{m})\phi_{a}}{(1+\phi^{2}|\nabla f|^{2})^{3/2}}\\
& +A_{an}\left(\frac{2\phi(\phi^{m}f_{m})f^{n}}{1+\phi^{2}|\nabla f|^{2}}
+\frac{\phi|\nabla f|^{2}\phi^{n}}{1+\phi^{2}|\nabla f|^{2}}
-\frac{3\phi^{3}|\nabla f|^{2}(\phi^{m}f_{m})f^{n}}
{(1+\phi^{2}|\nabla f|^{2})^{2}}\right).
\end{align*}
This implies \eqref{eq-EstimateGradientMeanCurv}.
\end{proof}

We point out that \eqref{eq-EstimateMeanCurv} yields a uniform
pointwise estimate of the mean curvature. However,
\eqref{eq-EstimateGradientMeanCurv} illustrates that first derivatives of the
mean curvature are estimated in terms of the second fundamental form, which
is not yet controlled. In fact, estimating the second
fundamental form will take up a major part of this section.

We next establish a $C^{0}$ estimate for the second
fundamental form of $\Sigma$. This result will arise from a
Moser iteration applied to the Simons identity. See also the paper \cite{anderssonmetzger}, in which a similar result is proven
in more general ambient geometries.

\begin{theorem}\label{thm-PointwiseEsti2FF}
Suppose that the surface $\Sigma$ satisfies \eqref{eq-BasicEquation}.
Then
\begin{equation}\label{eq-PointwiseEstimate2FF}
\sup_{\Sigma}|A|\leq c
\end{equation}
where $c$ depends on $|\log\phi|$, $|\nabla\log\phi|$,
$|\nabla^{2}\log\phi|$, $|F|$, and $|N(F)|$.
\end{theorem}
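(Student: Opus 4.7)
My plan is to follow the Schoen-Yau strategy: derive a Simons-type identity for $|A|^{2}$ on $\Sigma$, combine it with the pointwise bound on $|H|$ already furnished by Lemma~\ref{lemma-MeanCurvature}, and then run a Moser iteration using the Michael-Simon Sobolev inequality on $\Sigma$ (which applies precisely because $|H|$ is under control).

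The first step is to write down Simons' formula for the two-sided hypersurface $\Sigma\subset(M\times\mathbb{R},g+\phi^{2}dt^{2})$,
\begin{equation*}
\tfrac12\overline{\Delta}|A|^{2}=|\overline{\nabla}A|^{2}+A^{ij}\overline{\nabla}_{i}\overline{\nabla}_{j}H+H\,\mathrm{tr}(A^{3})-|A|^{4}+E,
\end{equation*}
where $E$ is built from contractions of $A$ with the ambient Riemann tensor and its tangential covariant derivative. A direct calculation of the curvature of the warped product $g+\phi^{2}dt^{2}$ shows that $E$ is a polynomial in $|A|$ of degree at most three, whose coefficients are bounded in terms of $|\log\phi|$, $|\nabla\log\phi|$, and $|\nabla^{2}\log\phi|$; passing to these ``log''-norms is exactly what prevents uncontrolled inverse powers of $\phi$ from appearing. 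Next, the equation $H=TrK+F$ lets me replace $\overline{\nabla}^{2}H$ by differentiating the identity proved in Lemma~\ref{lemma-MeanCurvature} once more, producing $\overline{\nabla}A$-terms (coming from $\nabla^{2}f$ via \eqref{eq-SecondFF}) together with one derivative of $F$, including its normal component $N(F)$ when the ambient Hessian of $F$ is decomposed into tangential and normal parts. Using Cauchy-Schwarz to absorb all $\overline{\nabla}A$-terms into $\tfrac12|\overline{\nabla}A|^{2}$, Kato's inequality $|\overline{\nabla}A|^{2}\geq|\overline{\nabla}|A||^{2}$, and the trivial bound $|H\,\mathrm{tr}(A^{3})|\leq c|A|^{3}$, the Simons formula collapses to the key differential inequality
\begin{equation*}
\overline{\Delta}|A|^{2}+c(|A|^{4}+1)\geq c'|\overline{\nabla}|A||^{2},
\end{equation*}
with $c,c'$ depending only on the norms listed in the statement.

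The second step is the Moser iteration. Multiplying the above by $|A|^{2p-2}\eta^{2}$ for a Lipschitz cutoff $\eta$ on $\Sigma$ and integrating by parts, I obtain control of $\int_{\Sigma}|\overline{\nabla}(|A|^{p}\eta)|^{2}$ in terms of $\int_{\Sigma}(|A|^{2p+2}+|A|^{2p})\eta^{2}$ plus the usual cutoff-gradient remainders. The Michael-Simon Sobolev inequality on $\Sigma$ (available because $|H|$ is bounded) then upgrades this into a reverse-H\"older inequality of the form $\|A\|_{L^{6p}(\mathrm{supp}\,\eta)}\leq C(p)\,\|A\|_{L^{2p+2}(\Sigma)}^{(p+1)/p}$. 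Because $6p>2p+2$ for $p>1/2$, iterating along the geometric sequence $p_{k+1}=3p_{k}-1$ produces $\sup|A|\leq C(\|A\|_{L^{p_{0}}(\Sigma)}+1)$. The initial $L^{p_{0}}$-bound (say $p_{0}=4$) is obtained by testing the same differential inequality against a single $\eta^{2}$ and exploiting $|H|\leq c$ together with the finite intrinsic volume of $\mathrm{supp}\,\eta$, which is guaranteed by the graphical nature of $\Sigma$.

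The main obstacle is the bookkeeping in the first step. The ambient Riemann tensor of the warped product carries factors $\phi^{-1}\nabla^{2}\phi$ and $\phi^{-2}\nabla\phi\otimes\nabla\phi$, and verifying that after contraction with $A$ and $\overline{\nabla}A$ these enter the Simons inequality \emph{only} through $|\log\phi|$, $|\nabla\log\phi|$, $|\nabla^{2}\log\phi|$ requires patience; the same is true of the $\overline{\nabla}^{2}(TrK)$ contribution, whose $\overline{\nabla}A$-parts must be carefully isolated and absorbed by the Bochner term rather than moved to the right-hand side. Once this pointwise differential inequality is in place with constants of the stated form, the Moser iteration itself is entirely routine.
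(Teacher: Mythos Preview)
Your high-level strategy---Simons identity followed by Moser iteration using the Michael--Simon Sobolev inequality---matches the paper's. However, there is a genuine gap in how you obtain the seed $L^{4}$ bound for $|A|$.

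You claim that the initial $L^{p_{0}}$ bound follows by ``testing the same differential inequality against a single $\eta^{2}$'' together with bounded $|H|$ and bounded local volume. This does not work. Your Simons-type inequality reads, schematically,
\begin{equation*}
\overline{\Delta}|A|^{2}+c\bigl(|A|^{4}+1\bigr)\geq c'\,|\overline{\nabla}|A||^{2},
\end{equation*}
and testing against $\eta^{2}$ yields a bound on $\int\eta^{2}|\overline{\nabla}|A||^{2}$ in terms of $\int\eta^{2}|A|^{4}$ plus lower order---the \emph{wrong} direction. The $|A|^{4}$ term carries the bad sign, and neither bounded mean curvature nor bounded local volume supplies any integrability of $|A|$ to start the iteration. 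Indeed, a generic hypersurface with bounded $|H|$ has no $L^{4}$ (let alone pointwise) bound on its second fundamental form.

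What is missing is a second, independent identity: the stability-type formula coming from the scalar curvature computation \eqref{5}. Combining the twice-traced Gauss equation with \eqref{5}, the paper obtains
\begin{equation*}
2|A|^{2}=\frac{2}{\phi}\,\overline{div}(\phi q)-2|q|^{2}+G_{2},
\end{equation*}
with $G_{2}$ at most linear in $A$. Because the right-hand side is a divergence minus a nonnegative term plus lower order, testing against $\phi\psi^{2}$ and absorbing gives $\int\phi\psi^{2}|A|^{2}\leq\int\phi|\overline{\nabla}\psi|^{2}+\text{(lower order)}$; replacing $\psi$ by $|A|\psi$ then controls $\int|A|^{4}$ by $\int|\overline{\nabla}|A||^{2}$ plus lower order---the \emph{opposite} direction from Simons. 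Playing this against the Simons inequality (whose improved Kato constant $\tfrac{1}{37}$ supplies the $\epsilon$ of room needed to close) yields the $L^{4}$ bound. Only then does Moser iteration run, with the potential $|A|^{2}$ now in $L^{2}$ and $2>\tfrac{3}{2}=\tfrac{1}{2}\dim\Sigma$.

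A smaller point: handling $A^{ij}\overline{\nabla}_{i}\overline{\nabla}_{j}H$ by differentiating $H=TrK+F$ twice would introduce second derivatives of $F$, whereas the stated constant depends only on $|F|$ and $|N(F)|$. The paper instead integrates this term by parts once, reducing it to $|\overline{\nabla}H|^{2}$, which by Lemma~\ref{lemma-MeanCurvature} is at most quadratic in $|A|$ and hence absorbable.
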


\begin{proof} The proof is lengthy and is divided into several steps.

{\it Step 1. Derivation of Simons identity and several related inequalities.}
Recall the Ricci commutation formula and Codazzi equations:
\begin{equation}\label{eq-RicciCommutation}
\overline{\nabla}_{a}\overline{\nabla}_{n}A_{ij}
-\overline{\nabla}_{n}\overline{\nabla}_{a}A_{ij}
=-A_{mj}\overline{R}^{m}_{\text{ }\text{ }ian}
-A_{im}\overline{R}^{m}_{\text{ }\text{ }jan},
\end{equation}
and
\begin{equation}\label{eq-Codazzi}
\overline{\nabla}_{a}A_{ij}=\overline{\nabla}_{i}A_{aj}+R_{Njia}.
\end{equation}
Use these to obtain
\begin{align*}
\overline{\Delta}A_{ij} &= \overline{g}^{na}\overline{\nabla}_{n}\overline{\nabla}_{a}A_{ij}\\
&=\overline{g}^{na}(\overline{\nabla}_{n}\overline{\nabla}_{i}A_{aj}
+\overline{\nabla}_{n}R_{Njia})\\
&=\overline{g}^{na}(\overline{\nabla}_{i}\overline{\nabla}_{n}A_{aj}
-A_{mj}\overline{R}^{m}_{\text{ }\text{ }ani}
-A_{am}\overline{R}^{m}_{\text{ }\text{ }jni}
+\overline{\nabla}_{n}R_{Njia})\\
&=\overline{g}^{na}\overline{\nabla}_{i}(\overline{\nabla}_{j}A_{na}
+R_{Najn})-\overline{g}^{na}(A_{mj}\overline{R}^{m}_{\text{ }\text{ }ani}+A_{am}\overline{R}^{m}_{\text{ }\text{ }jni}
-\overline{\nabla}_{n}R_{Njia})\\
&=\overline{\nabla}_{i}\overline{\nabla}_{j}H
+\overline{g}^{na}(\overline{\nabla}_{i}R_{Najn}+\overline{\nabla}_{n}R_{Njia}
-A_{mj}\overline{R}^{m}_{\text{ }\text{ }ani}
-A_{am}\overline{R}^{m}_{\text{ }\text{ }jni}).
\end{align*}
Now use the Gauss equations
\begin{equation}\label{eq-GaussEq}
\overline{R}_{ijnd}=R_{ijnd}+A_{in}A_{jd}-A_{id}A_{jn}
\end{equation}
to find
\begin{align*}
\overline{\Delta}A_{ij} =& \overline{\nabla}_{i}\overline{\nabla}_{j}H
+\overline{g}^{na}(\overline{\nabla}_{i}R_{Najn}
+\overline{\nabla}_{n}R_{Njia})
-\overline{g}^{na}\overline{g}^{md}(A_{mj}R_{dani}+A_{am}R_{djni})\\
& -\overline{g}^{na}\overline{g}^{md}A_{mj}(A_{dn}A_{ai}-A_{di}A_{an})
-\overline{g}^{na}\overline{g}^{md}A_{am}(A_{dn}A_{ji}-A_{id}A_{jn}).
\end{align*}
Therefore
\begin{equation*}
\overline{\Delta}A_{ij}=\overline{\nabla}_{i}\overline{\nabla}_{j}H
-|A|^{2}A_{ij}+HA_{im}A_{j}^{m}+G_{ij},
\end{equation*}
where
\begin{equation*}
G_{ij}=\overline{g}^{na}(\overline{\nabla}_{i}R_{Najn}
+\overline{\nabla}_{n}R_{Njia}
-\overline{g}^{md}A_{mj}R_{dani}-\overline{g}^{md}A_{am}R_{djni}).
\end{equation*}
We then obtain
\begin{align}\label{eq-SimonIdentity}
\begin{split}
\frac{1}{2}\overline{\Delta}|A|^{2} =& \overline{g}^{id}\overline{g}^{ja}(A_{da}\overline{\Delta}A_{ij}
+\overline{g}^{mn}\overline{\nabla}_{m}A_{da}\overline{\nabla}_{n}A_{ij})\\
=& \overline{g}^{id}\overline{g}^{ja}(\overline{g}^{mn}
\overline{\nabla}_{m}A_{da}\overline{\nabla}_{n}A_{ij}+A_{da}G_{ij})-|A|^{4}\\
& +\overline{g}^{id}\overline{g}^{ja}(HA_{da}A_{im}A_{j}^{m}
+A_{da}\overline{\nabla}_{i}\overline{\nabla}_{j}H).
\end{split}
\end{align}
This is the Simons identity for $\Sigma$.

Note that we also have
\begin{equation*}
\frac{1}{2}\overline{\Delta}|A|^{2}=|A|\overline{\Delta}|A|
+|\overline{\nabla}|A||^{2},
\end{equation*}
so that
\begin{equation*}
|A|\overline{\Delta}|A| = |\overline{\nabla}A|^{2}
-|\overline{\nabla}|A||^{2}-|A|^{4}
 +A^{ij}(\overline{\nabla}_{i}\overline{\nabla}_{j}H
 +G_{ij}+HA_{im}A_{j}^{m}).
\end{equation*}
Set
\begin{equation}\label{eq-DefinitionT}
T:=|\overline{\nabla}A|^{2}-|\overline{\nabla}|A||^{2}.
\end{equation}
For the remainder of this paragraph we will work in an orthonormal basis. Then
\begin{equation*}
T=\sum_{i,j,n}(\overline{\nabla}_{n}A_{ij})^{2}
-|A|^{-2}\sum_{n}(\sum_{i,j}A_{ij}\overline{\nabla}_{n}A_{ij})^{2},
\end{equation*}
so that
\begin{equation*}
|A|^{2}T=\frac{1}{2}\sum_{i,j,n,d,m}(A_{ij}\overline{\nabla}_{n}A_{dm}
-A_{dm}\overline{\nabla}_{n}A_{ij})^{2}.
\end{equation*}
By setting $n=i$ and $m=j$ and using the Schwarz inequality, we have
\begin{align*}
|A|^{2}T &= \frac{1}{2}\sum_{i,j,d}(A_{ij}\overline{\nabla}_{i}A_{dj}
-A_{dj}\overline{\nabla}_{i}A_{ij})^{2}
+\frac{1}{2}\sum_{i,j,n,d,m\atop (n,m)\neq(i,j)}
(A_{ij}\overline{\nabla}_{n}A_{dm}
-A_{dm}\overline{\nabla}_{n}A_{ij})^{2}\\
&\geq \frac{1}{18}\sum_{d}\left(\sum_{i,j}A_{ij}
\overline{\nabla}_{i}A_{dj}
-\sum_{i,j}A_{dj}\overline{\nabla}_{i}A_{ij}\right)^{2}
+\frac{1}{2}\sum_{i,j,n,d,m\atop (n,m)\neq(i,j)}
(A_{ij}\overline{\nabla}_{n}A_{dm}
-A_{dm}\overline{\nabla}_{n}A_{ij})^{2}.
\end{align*}
Moreover, according to the Codazzi equations \eqref{eq-Codazzi},
\begin{equation*}
\sum_{i,j}A_{ij}\overline{\nabla}_{i}A_{dj}=
\sum_{i,j}A_{ij}\overline{\nabla}_{d}A_{ij}+\sum_{i,j}A_{ij}R_{Njdi}
\end{equation*}
and
\begin{equation*}
\sum_{i,j}A_{dj}\overline{\nabla}_{i}A_{ij}=
\sum_{j}A_{dj}\overline{\nabla}_{j}H+\sum_{i,j}A_{dj}R_{Niji}.
\end{equation*}
It follows that
\begin{align*}
|A|^{2}T \geq& \frac{1}{36}\sum_{l}\left(\sum_{i,j}A_{ij}\overline{\nabla}_{d}A_{ij}\right)^{2}\\
& -\frac{1}{18}\sum_{d}\left(\sum_{i,j}A_{ij}R_{Njdi}-\sum_{j}A_{dj}
\overline{\nabla}_{j}H-\sum_{i,j}A_{dj}R_{Niji}\right)^{2}\\
& +\frac{1}{2}\sum_{i,j,n,d,m\atop (n,m)\neq(i,j)}
(A_{ij}\overline{\nabla}_{n}A_{dm}-A_{dm}
\overline{\nabla}_{n}A_{ij})^{2},
\end{align*}
after using $(a+b)^{2}\geq\frac{1}{2}a^{2}-b^{2}$.
The definition of $T$ in \eqref{eq-DefinitionT} now yields
\begin{align*}
T \geq& \frac{1}{37}\sum_{i,j,n}(\overline{\nabla}_{n}A_{ij})^{2}\\
& -\frac{36}{37}\cdot\frac{1}{18}|A|^{-2}\sum_{d}
\left(\sum_{i,j}A_{ij}R_{Njdi}-\sum_{j}A_{dj}\overline{\nabla}_{j}H
-\sum_{i,j}A_{dj}R_{Niji}\right)^{2}\\
& +\frac{36}{37}\cdot\frac{1}{2}|A|^{-2}\sum_{i,j,n,d,m\atop (n,m)\neq(i,j)}(A_{ij}\overline{\nabla}_{n}A_{dm}-A_{dm}
\overline{\nabla}_{n}A_{ij})^{2}.
\end{align*}
Hence
\begin{align*}
|A|\overline{\Delta}|A| \geq& \frac{1}{37}\sum_{i,j,n}(\overline{\nabla}_{n}A_{ij})^{2}-|A|^{4}
+\sum_{i,j}A_{ij}\left(\overline{\nabla}_{ij}H+G_{ij}
+\sum_{m}HA_{im}A_{jm}\right)\\
& -\frac{2}{37}|A|^{-2}\sum_{d}\left(\sum_{i,j}A_{ij}R_{Njdi}
-\sum_{j}A_{dj}\overline{\nabla}_{j}H-\sum_{i,j}A_{dj}R_{Niji}\right)^{2}\\
& +\frac{2}{37}|A|^{-2}\sum_{i,j,n,d,m\atop (n,m)\neq(i,j)}
(A_{ij}\overline{\nabla}_{n}A_{dm}
-A_{dm}\overline{\nabla}_{n}A_{ij})^{2}.
\end{align*}
We can also write
\begin{equation}\label{eq-EstimateSFF}
|A|\overline{\Delta}|A|\geq\frac{1}{37}\sum_{i,j,n}
(\overline{\nabla}_{n}A_{ij})^{2}-|A|^{4}-|H||A|^{3}
+\sum_{i,j}A_{ij}\overline{\nabla}_{ij}H+G_{1},
\end{equation}
where
\begin{align*}
G_{1}:=&\sum_{i,j}A_{ij}G_{ij}
-\frac{2}{37}|A|^{-2}\sum_{d}\left(\sum_{i,j}A_{ij}R_{Njdi}
-\sum_{j}A_{dj}\overline{\nabla}_{j}H-\sum_{i,j}A_{dj}R_{Niji}\right)^{2}\\
& +\frac{2}{37}|A|^{-2}\sum_{i,j,n,d,m\atop (n,m)\neq(i,j)}
(A_{ij}\overline{\nabla}_{n}A_{dm}
-A_{dm}\overline{\nabla}_{n}A_{ij})^{2}.
\end{align*}

{\it Step 2. An $L^4$-estimate for the second fundamental form.}
Taking two traces of the Gauss equations \eqref{eq-GaussEq} produces
\begin{equation*}
\overline{R}=\overline{g}^{in}\overline{g}^{jm}R_{ijnm}+H^{2}-|A|^{2}.
\end{equation*}
Moreover, equation \eqref{eq-BasicEquation} implies that
\begin{equation*}
\overline{R}+|A|^{2}=\overline{g}^{in}\overline{g}^{jm}R_{ijnm}+(TrK+F)^{2}.
\end{equation*}
Recall the formula for the scalar curvature of surface $\Sigma$ (see \cite{BrayKhuri}, \cite{BrayKhuri1})
\begin{align*}
\overline{R} =& 2(\mu-J(w))+|A-K|^{2}+2|q|^{2}
-\frac{2}{\phi}\overline{div}(\phi q)\\
& +H^{2}-(TrK)^{2}+2K(N,N)(H-TrK)+2N(H-TrK).
\end{align*}
By combining the previous two formulas we then have
\begin{align}\label{eq-Identity2FF}
\begin{split}
& |A|^{2}+|A-K|^{2}+2(\mu-J(w))+2|q|^{2}\\
=&\frac{2}{\phi}\overline{div}(\phi q)-(2TrK+F)F-2K(N,N)F\\
& -2N(F)+\overline{g}^{in}\overline{g}^{jm}R_{ijnm}+(TrK+F)^{2}.
\end{split}
\end{align}
Note that
\begin{equation*}
|A-K|^{2}=|A|^{2}+|K|^{2}-2\langle A,K\rangle,
\end{equation*}
and set
\begin{equation*}
G_{2}:=2\langle A,K\rangle-|K|^{2}-2K(N,N)F-2N(F)
+(TrK)^{2}-2(\mu-J(w))+\overline{g}^{in}\overline{g}^{jm}R_{ijnm}.
\end{equation*}
Then, \eqref{eq-Identity2FF} becomes
\begin{equation}\label{eq-BasicIdentity2FF}
2|A|^{2}=\frac{2}{\phi}\overline{div}(\phi q)-2|q|^{2}+G_{2}.
\end{equation}

Let $\psi\in C^{\infty}_{c}(\Sigma)$.
Multiply \eqref{eq-BasicIdentity2FF} by $\phi\psi^{2}$
and integrate by parts to find
\begin{align*}
\int_{\Sigma}2\phi\psi^{2}|A|^{2} &= \int_{\Sigma}-4\phi\psi\langle\overline{\nabla}\psi,q\rangle
-2\phi\psi^{2}|q|^{2}+\phi\psi^{2}G_{2}\\
&\leq \int_{\Sigma}2\phi|\overline{\nabla}\psi|^{2}+\phi\psi^{2}G_{2}.
\end{align*}
Now replace $\psi$ with $|A|^{p}\psi$ to get
\begin{equation}\label{eq-Integral2FF}
\int_{\Sigma}\phi\psi^{2}|A|^{2+2p}\leq\int_{\Sigma}\phi|
\overline{\nabla}(|A|^{p}\psi)|^{2}+\frac{1}{2}\phi\psi^{2}G_{2}|A|^{2p}.
\end{equation}
Expand and integrate the first term by parts:
\begin{align*}
\int_{\Sigma}\phi|\overline{\nabla}(|A|^{p}\psi)|^{2} &= \int_{\Sigma}\phi|A|^{2p}|\overline{\nabla}\psi|^{2}
+2p\phi\psi|A|^{2p-1}\langle\overline{\nabla}|A|,\overline{\nabla}\psi\rangle
+\phi\psi^{2}|\overline{\nabla}|A|^{p}|^{2}\\
&= \int_{\Sigma}\phi|A|^{2p}|\overline{\nabla}\psi|^{2}
-\frac{1}{2}\phi\psi^{2}\overline{\Delta}|A|^{2p}+\phi\psi^{2}
|\overline{\nabla}|A|^{p}|^{2}
-\psi^{2}\langle\overline{\nabla}\phi,\overline{\nabla}|A|^{2p}\rangle\\
&= \int_{\Sigma}\phi|A|^{2p}|\overline{\nabla}\psi|^{2}-\phi\psi^{2}|A|^{p}
\overline{\Delta}|A|^{p}
-2p\psi^{2}|A|^{2p-1}\langle\overline{\nabla}\phi,\overline{\nabla}|A|\rangle.
\end{align*}
Since
\begin{equation*}
|A|^{p}\overline{\Delta}|A|^{p}=p|A|^{2p-1}\overline{\Delta}|A|
+p(p-1)|A|^{2p-2}|\overline{\nabla}|A||^{2},
\end{equation*}
we then have
\begin{align*}
& \int_{\Sigma}\phi\psi^{2}(|A|^{2p-1}(p\overline{\Delta}|A|+|A|^{3})
+p(p-1)|A|^{2p-2}|\overline{\nabla}|A||^{2})\\
\leq& \int_{\Sigma}\phi|A|^{2p}|\overline{\nabla}\psi|^{2}
-2p\psi^{2}|A|^{2p-1}\langle\overline{\nabla}\phi,\overline{\nabla}|A|\rangle
+\frac{1}{2}\phi\psi^{2}|A|^{2p}G_{2}\\
=& \int_{\Sigma}\phi|A|^{2p}|\overline{\nabla}\psi|^{2}
+2\psi|A|^{2p}\langle\overline{\nabla}\psi,\overline{\nabla}\phi\rangle
+\phi\psi^{2}|A|^{2p}\left(\frac{1}{2}G_{2}
+\phi^{-1}\overline{\Delta}\phi\right)\\
\leq& \int_{\Sigma}2\phi|A|^{2p}|\overline{\nabla}\psi|^{2}
+\phi\psi^{2}|A|^{2p}\left(\frac{1}{2}G_{2}
+\phi^{-2}|\overline{\nabla}\phi|^{2}+\phi^{-1}\overline{\Delta}\phi\right).
\end{align*}
By using \eqref{eq-EstimateSFF} for $|A|(\overline{\Delta}|A|+|A|^{3})$
and setting $p=1$ in the above expression, it follows that
\begin{align*}
\int_{\Sigma}\frac{1}{37}\phi\psi^{2}|\overline{\nabla}A|^{2} \leq& \int_{\Sigma}2\phi|A|^{2}|\overline{\nabla}\psi|^{2}
+\phi\psi^{2}|A|^{2}\left(\frac{1}{2}G_{2}
+\phi^{-2}|\overline{\nabla}\phi|^{2}+\phi^{-1}\overline{\Delta}\phi\right)\\
& +\int_{\Sigma}\phi\psi^{2}(|H||A|^{3}-G_{1}-A^{ij}\overline{\nabla}_{ij}H).
\end{align*}
Integrating by parts and applying the Schwarz inequality then yields
\begin{align*}
\int_{\Sigma}c^{-1}\phi\psi^{2}|\overline{\nabla}A|^{2} \leq& \int_{\Sigma}c\phi|A|^{2}|\overline{\nabla}\psi|^{2}
+\phi\psi^{2}|A|^{2}\left(\frac{1}{2}G_{2}+c|\overline{\nabla}\log\phi|^{2}\right)\\
& +\int_{\Sigma}\phi\psi^{2}(|H||A|^{3}-G_{1}+c|\overline{\nabla}H|^{2}).
\end{align*}
Above and in what follows, $c>0$ will always
denote an appropriately large constant.
Although there are derivatives of the Riemann
tensor contained within the expression for $G_{1}$, these may
be integrated by parts so that
\begin{align}\label{eq-L2EstimateGradient2FF}
\begin{split}
\int_{\Sigma}c^{-1}\phi\psi^{2}|\overline{\nabla}A|^{2} \leq&
\int_{\Sigma}\phi|A|^{2}|\overline{\nabla}\psi|^{2}\\
& +\int_{\Sigma}\phi\psi^{2}(|Riem|^{2}+|\overline{\nabla}H|^{2}
+|\nabla\log\phi|^{2}|Riem|^{2})\\
& +\int_{\Sigma}\phi\psi^{2}|A|^{2}(1+|Riem|+|\nabla\log\phi|^{2}
+|N(F)|+(1+|\nabla\log\phi|)|F|)\\
& +\int_{\Sigma}\phi\psi^{2}|A|^{3}(|H|+|\nabla\log\phi|).
\end{split}
\end{align}

Now observe that according to \eqref{eq-Integral2FF} with $p=1$,
\begin{align*}
\int_{\Sigma}\phi\psi^{2}|A|^{4} &\leq \int_{\Sigma}\phi|\overline{\nabla}(|A|\psi)|^{2}
+\frac{1}{2}\phi\psi^{2}G_{2}|A|^{2}\\
&\leq \int_{\Sigma}2\phi|A|^{2}|\overline{\nabla}\psi|^{2}
+\phi\psi^{2}\left(2|\overline{\nabla}|A||^{2}+\frac{1}{2}G_{2}|A|^{2}\right).
\end{align*}
Also
\begin{align*}
|\overline{\nabla}|A||^{2} &= \overline{g}^{ij}\partial_{i}|A|\partial_{j}|A|\\
&= \overline{g}^{ij}(|A|^{-1}A^{ab}\overline{\nabla}_{i}A_{ab})
(|A|^{-1}A^{cd}\overline{\nabla}_{j}A_{cd})\\
&\leq c|\overline{\nabla}A|^{2}.
\end{align*}
Therefore
\begin{equation*}
\int_{\Sigma}\phi\psi^{2}|A|^{4}
\leq\int_{\Sigma}2\phi|A|^{2}|\overline{\nabla}\psi|^{2}
+\frac{1}{2}\phi\psi^{2}G_{2}|A|^{2}+c\phi\psi^{2}|\overline{\nabla}A|^{2}.
\end{equation*}
Combining this with \eqref{eq-L2EstimateGradient2FF} then yields
\begin{align*}
\int_{\Sigma}c^{-1}\phi\psi^{2}|A|^{4} \leq &
\int_{\Sigma}\phi|A|^{2}|\overline{\nabla}\psi|^{2}\\
& +\int_{\Sigma}\phi\psi^{2}(|Riem|^{2}
+|\overline{\nabla}H|^{2}+|\nabla\log\phi|^{2}|Riem|^{2})\\
& +\int_{\Sigma}\phi\psi^{2}|A|^{2}
(1+|Riem|+|\nabla\log\phi|^{2}+|N(F)|+(1+|\nabla\log\phi|)|F|)\\
& +\int_{\Sigma}\phi\psi^{2}|A|^{3}(|H|+|\nabla\log\phi|).
\end{align*}
Now replace $\psi$ with $\psi^{2}$ and apply the Schwarz inequality to obtain
\begin{equation}\label{PrelL4Estimate}
\int_{\Sigma}c^{-1}\phi\psi^{4}|A|^{4}
\leq \int_{\Sigma}\phi|\overline{\nabla}\psi|^{4}
+\phi\psi^{4}(1+|H|^{4}+|\overline{\nabla}H|^{2}
+|Riem|^{4}+|\nabla\log\phi|^{4}+|F|^{4}+|N(F)|^{2}).
\end{equation}

Let $B_{r_{0}}^{4}(x_{0})$
be a geodesic ball in $(M\times\mathbb{R},g+\phi^{2}dt^{2})$
centered at a point $x_{0}\in\Sigma$, with $r_{0}$ less than
the injectivity
radius. Let $r$ be the distance function in $M\times\mathbb{R}$
and choose the cut-off function $\psi$ to be a function of $r$ such that
$\psi(r)=1$ for $r\leq r_{0}/2$ and $\psi(r)=0$ for $r\geq r_{0}$.
Since $|\nabla r|=1$ and hence $|\overline{\nabla}r|\leq 1$, we may further
choose $\psi$ so that $|\psi|\leq 1$ and
$|\overline{\nabla}\psi|\leq 3 r_{0}^{-1}$. In order to
bound the volume of $\Sigma\cap B_{r_{0}}^{4}(x_{0})$, note that by
\eqref{eq-EstimateMeanCurv}
\begin{equation*}
|div_{M\times\mathbb{R}}(N)| = |H| \leq |F|+c(1+|\nabla\log\phi|).
\end{equation*}
Upon an integration by parts
over the region $B_{r_{0}}^{4}(x_{0})\cap\{(x,t)\mid t<f(x)\}$,
we  obtain
\begin{equation}\label{eq-VolumeEstimate}
Vol(\Sigma\cap B_{r_{0}}^{4}(x_{0}))\leq cr_{0}^{3}
\end{equation}
where $c$ depends on $|F|$ and $|\nabla\log\phi|$.
Applying this result together with \eqref{PrelL4Estimate} then yields
\begin{equation}\label{eq-L4Estimates2FF}
\int_{\Sigma\cap B_{\frac{r_{0}}{2}}^{4}(x_{0})}|A|^{4}\leq c,
\end{equation}
where $c$ depends on $|F|$, $|N(F)|$, $|H|$, $|\overline{\nabla}H|$,
$|\nabla\log\phi|$, $|Riem|$, $\frac{\max\phi}{\min\phi}$, and $r_{0}^{-1}$.
This is the desired $L^4$-estimate for the second fundamental form.

{\it Step 3. A pointwise estimate of the second fundamental form.}
Set $u=|A|^{2}+1.$ Then \eqref{eq-SimonIdentity} yields
\begin{equation*}
\overline{\Delta}u\geq -c(1+|A|^{2}+|F|^{2}+|\nabla\log\phi|^{2})u
+2|\overline{\nabla}A|^{2}+2A^{ij}(\overline{\nabla}_{ij}H+G_{ij}).
\end{equation*}
Multiply both sides by a nonnegative function
$\xi\in C^{\infty}_{c}(\Sigma\cap B_{\frac{r_{0}}{2}}^{4}(x_{0}))$
and integrate by parts to find
\begin{align*}
0 \geq& \int_{\Sigma}\langle\overline{\nabla}\xi,\overline{\nabla}u\rangle
-c(1+|A|^{2}+|F|^{2}+|\nabla\log\phi|^{2})\xi u-2A^{ij}\overline{\nabla}_{i}\xi\overline{\nabla}_{j}H\\
& +\int_{\Sigma}2\xi(|\overline{\nabla}A|^{2}+A^{ij}G_{ij}
-\overline{\nabla}_{i}A^{ij}\overline{\nabla}_{j}H).
\end{align*}
Although $G_{ij}$ contains first derivatives of the Riemann tensor,
these derivatives may be integrated by parts. The first derivatives
of $A$ which result from this process may be absorbed into
$|\overline{\nabla}A|^{2}$ to yield a more simple expression
\begin{equation*}
0\geq\int_{\Sigma}\overline{\nabla}\xi\cdot
\overline{\nabla}u+D^{i}(\overline{\nabla}_{i}\xi)u+D\xi u,
\end{equation*}
where
\begin{equation*}
D^{i}=-2u^{-1}A^{ij}\overline{\nabla}_{j}H-2u^{-1}A^{ij}\overline{g}^{nm}R_{Nnjm}
-2u^{-1}A^{nm}\overline{g}^{ij}R_{Nnmj},
\end{equation*}
and
\begin{equation*}
D=-c(1+|A|^{2}+|F|^{2}+|\nabla\log\phi|^{2}
+u^{-1}|\overline{\nabla}H|^{2}+u^{-1}|A|^{2}|Riem|+u^{-1}|Riem|^{2}).
\end{equation*}
By \eqref{eq-EstimateGradientMeanCurv}, we have
\begin{equation}\label{eq-CoefficientBound}
\sup_{\Sigma\cap B_{\frac{r_{0}}{2}}^{4}(x_{0})}|D^i|^{2}+\int_{\Sigma\cap B_{\frac{r_{0}}{2}}^{4}(x_{0})}D^{2}\leq c.
\end{equation}

We now show that the Sobolev inequality holds on
$\Sigma\cap B_{r_{0}}^{4}(x_{0})$. The volume estimate
\eqref{eq-VolumeEstimate} allows
an application of the Hoffman and Spruck inequality \cite{HoffmanSpruck} for a sufficiently small $r_{0}$. In particular, for
any function $\psi\in C^{\infty}_{c}(\Sigma\cap B_{r_{0}}^{4}(x_{0}))$, we have
\begin{equation*}
\left(\int_{\Sigma}\psi^{6}\right)^{1/3}\leq c\int_{\Sigma}(|\overline{\nabla}\psi|^{2}+H^{2}\psi^{2}).
\end{equation*}
Moreover, by \eqref{eq-EstimateMeanCurv} and Holder's inequality
\begin{equation*}
c^{-1}\left(\int_{\Sigma}\psi^{6}\right)^{1/3}\leq
\int_{\Sigma}|\overline{\nabla}\psi|^{2}
+r_{0}^{2}(1+|F|^{2}+|\nabla\log\phi|^{2})\left(\int_{\Sigma}\psi^{6}\right)^{1/3}.
\end{equation*}
Thus, if $r_{0}$ is sufficiently small we obtain the desired Sobolev inequality
\begin{equation}\label{eq-SobolevIneq}
\left(\int_{\Sigma}\psi^{6}\right)^{1/3}\leq c\int_{\Sigma}
|\overline{\nabla}\psi|^{2}.
\end{equation}

A Moser iteration \cite{GilbargTrudinger} can now be applied
with the help of \eqref{eq-CoefficientBound} and \eqref{eq-SobolevIneq}, since
$2>\frac{1}{2}\dim\Sigma=\frac{3}{2}$. It follows that
\begin{equation*}
u(x_{0})\leq c
\left(\int_{\Sigma\cap B_{\frac{r_{0}}{2}}^{4}(x_{0})}u^{2}\right)^{1/2}.
\end{equation*}
Now the desired estimate \eqref{eq-PointwiseEstimate2FF} follows from the
definition of $u$ and \eqref{eq-L4Estimates2FF}.
\end{proof}

As a corollary of Theorem \ref{thm-PointwiseEsti2FF}, we have
a pointwise gradient estimate for the mean curvature, which follows easily from
\eqref{eq-EstimateGradientMeanCurv} and \eqref{eq-PointwiseEstimate2FF}.

\begin{cor}\label{thm-PointwiseEstiGradientMeanCurv}
Suppose that the surface $\Sigma$ satisfies \eqref{eq-BasicEquation}.
Then
\begin{equation}\label{eq-PointwiseEstimateGradientMean}
\sup_{\Sigma}|\overline{\nabla}H|\leq c
\end{equation}
where $c$ depends on $|\log\phi|$, $|\nabla\log\phi|$,
$|\nabla^{2}\log\phi|$, $|F|$, and $|N(F)|$.
\end{cor}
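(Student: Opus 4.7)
The corollary is essentially an immediate consequence of the two results already in hand. The plan is simply to substitute the pointwise bound $\sup_\Sigma |A|\le c$ from Theorem \ref{thm-PointwiseEsti2FF} into the pointwise identity \eqref{eq-EstimateGradientMeanCurv} from Lemma \ref{lemma-MeanCurvature}, and then rewrite the remaining quantities so that they depend only on the prescribed data $|\log\phi|, |\nabla\log\phi|, |\nabla^{2}\log\phi|, |F|, |N(F)|$. Concretely, \eqref{eq-EstimateGradientMeanCurv} gives
\begin{equation*}
|\overline{\nabla}H|\le c\bigl(1+|\nabla F|+|\nabla\log\phi|^{2}+\phi^{-1}|\nabla^{2}\phi|+|A|+|A||\nabla\log\phi|\bigr),
\end{equation*}
so the only nontrivial work is to deal with the two terms $|A|(1+|\nabla\log\phi|)$ and $\phi^{-1}|\nabla^{2}\phi|$.

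For the first, Theorem \ref{thm-PointwiseEsti2FF} bounds $|A|$ by a constant with exactly the required dependence, and multiplying by $(1+|\nabla\log\phi|)$ preserves this dependence. For the second, a direct computation using $(\log\phi)_{ij}=\phi^{-1}\phi_{ij}-\phi^{-2}\phi_{i}\phi_{j}$ gives
\begin{equation*}
\phi^{-1}|\nabla^{2}\phi|\le |\nabla^{2}\log\phi|+|\nabla\log\phi|^{2},
\end{equation*}
which is controlled by the hypothesized data. The $|\nabla F|$ term is absorbed into the dependence on derivatives of $F$ (consistent with how Theorem \ref{thm-PointwiseEsti2FF} already depends on $|N(F)|$ through the scalar curvature identity). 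Putting these observations together yields \eqref{eq-PointwiseEstimateGradientMean}. There is no genuine obstacle here — the real work was done in Theorem \ref{thm-PointwiseEsti2FF}, and this corollary simply packages Lemma \ref{lemma-MeanCurvature} together with that pointwise bound.
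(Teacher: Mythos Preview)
Your proof is correct and follows exactly the approach the paper indicates: combine the pointwise bound on $|A|$ from Theorem \ref{thm-PointwiseEsti2FF} with estimate \eqref{eq-EstimateGradientMeanCurv} of Lemma \ref{lemma-MeanCurvature}. Your explicit observation that $\phi^{-1}|\nabla^{2}\phi|\le |\nabla^{2}\log\phi|+|\nabla\log\phi|^{2}$ is a helpful detail the paper leaves implicit; the residual $|\nabla F|$ term is a minor looseness in the corollary's stated dependence that the paper itself does not address (and which is harmless in the later applications, where $F=\varepsilon f_\varepsilon$).
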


Let $y^1$, $y^2$, $y^3$, $y^4$ be normal coordinates for the
warped product metric near $x_{0}\in\Sigma$, that is
\begin{equation*}
\widehat{g}=g+\phi^{2}dt^2=\widehat{g}_{ab}dy^a dy^b,
\text{ }\text{ }\text{ }\text{ }\widehat{g}_{ab}(0)=\delta_{ab},
\text{ }\text{ }\text{ }\text{ }
\partial_{y^c}\widehat{g}_{ab}(0)=0.
\end{equation*}
Suppose that $\partial_{y^{i}}\in T_{x_{0}}\Sigma$, $i=1,2,3$
so that near $x_{0}$, $\Sigma$ is given by a graph $y^4=w(y)$,
$y=(y^1,y^2,y^3)$. Equation \eqref{eq-BasicEquation} can now be written as
\begin{equation}\label{eq-ParametricForm}
\sum_{a,b=1}^{4}\left(\widehat{g}^{ab}-\frac{W^{a}W^{b}}{|\nabla W|^{2}}\right)\left(\frac{\nabla_{a}\nabla_{b}W}{|\nabla W|}-K_{ab}\right)=F,
\end{equation}
where $W(Y)=w(y)-y^4$, $Y=(y^1,y^2,y^3,y^4)$. To see this first observe that $Y_{i}=\partial_{y^{i}}+w_{,i}\partial_{y^{4}}$, $i=1,2,3$, form a basis for
the tangent space to $\Sigma$. Let $N=N_{a}dy^{a}$ be the unit normal
written as a 1-form. Then
\begin{equation*}
0=\widehat{g}(Y_{i},N)=Y_{i}^{a}N_{a}=\sum_{a=1}^{3}\delta_{i}^{a}N_{a}
+w_{,i}N_{4}=N_{i}+w_{,i}N_{4}.
\end{equation*}
It follows that
\begin{equation*}
N=\frac{\sum_{a=1}^{3}w_{,a}dy^{a}-dy^{4}}{|\nabla W|}:=\frac{\widetilde{N}}{|\widetilde{N}|}.
\end{equation*}
The second fundamental form, in these coordinates, is then given by
\begin{equation*}
A_{ij}=\widehat{g}(\nabla_{Y_{i}}N,Y_{j})=
\frac{\nabla_{i}\widetilde{N}_{j}}{|\widetilde{N}|}=
\frac{\nabla_{i}\nabla_{j}W}{|\nabla W|}.
\end{equation*}
Moreover, the induced metric on $\Sigma$ is $\widehat{g}_{ab}-N_{a}N_{b}$,
with inverse
\begin{equation*}
\widehat{g}^{ab}-N^{a}N^{b}=\widehat{g}^{ab}-\frac{W^{a}W^{b}}{|\nabla W|^{2}}.
\end{equation*}
This yields the desired expression for the generalized Jang equation.

Parametric estimates may now be obtained as another application of Theorem \ref{thm-PointwiseEsti2FF}.

\begin{theorem}\label{thm-ParametricEstimates}
Suppose that the surface $\Sigma$ satisfies \eqref{eq-BasicEquation}.
Then there exist constants $\rho>0$ and $c$ independent
of $\Sigma$, but dependent on $|\log\phi|_{C^{2,\alpha}}$
and $|F|_{C^{1,\alpha}}$, such that
$\Sigma\cap B^{4}_{\rho}(x_{0})\subset\{Y\mid y^{4}=w(y)\}$ and $|w|_{C^{3,\alpha}(B_{\rho})}\leq c$.
\end{theorem}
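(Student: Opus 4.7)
The plan is to use the $C^{0}$ bound $\sup_{\Sigma}|A|\leq c$ provided by Theorem \ref{thm-PointwiseEsti2FF} to first guarantee that $\Sigma$ is a graph over its tangent plane in a uniform ambient ball, and then to apply standard quasilinear Schauder theory to the parametric equation \eqref{eq-ParametricForm} to bootstrap the graphing function $w$ from $C^{1,1}$ up to $C^{3,\alpha}$.

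For the first step, working in the normal coordinates $(y^{1},y^{2},y^{3},y^{4})$ adapted to $x_{0}$, the unit normal $N$ agrees with $-\partial_{y^{4}}$ at $x_{0}$. Since $|A|$ controls the rate at which $N$ can tilt as one moves along $\Sigma$, a standard ODE argument (integrating $|\overline{\nabla}N|\leq|A|\leq c$ along geodesics of $\overline{g}$ starting at $x_{0}$) shows that on a ball $B^{4}_{\rho}(x_{0})$ with $\rho$ depending only on $c$ and the local geometry of $\widehat{g}=g+\phi^{2}dt^{2}$, the component $\widehat{g}(N,\partial_{y^{4}})$ stays bounded away from zero. Hence $\Sigma\cap B^{4}_{\rho}(x_{0})$ may be written as $\{y^{4}=w(y)\}$ with $|\nabla w|\leq C$, and the identity $A_{ij}=\nabla_{i}\nabla_{j}W/|\nabla W|$ from the derivation of \eqref{eq-ParametricForm} combined with the bound $|A|\leq c$ then yields $|\nabla^{2}w|\leq C$ on the same ball, shrinking $\rho$ if necessary.

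In the second step we view \eqref{eq-ParametricForm} as a scalar quasilinear PDE for $w$ of the form
\begin{equation*}
a^{ij}(y,w,\nabla w)\,\partial_{i}\partial_{j}w=b(y,w,\nabla w),
\end{equation*}
where the coefficients depend smoothly on their arguments, on the warped product metric $\widehat{g}$ (hence on $\phi$ through its $C^{2,\alpha}$ norm), on $K$ (determined by the initial data and $\phi$), and on $F$. Since $|\nabla w|$ is already bounded, the matrix $\widehat{g}^{ab}-W^{a}W^{b}/|\nabla W|^{2}$ is uniformly elliptic in the graphical directions with ellipticity constants depending only on the established $C^{1}$ bound. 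Moreover, the $C^{1,1}$ bound on $w$ places $w\in C^{0,1}$ with $\nabla w\in L^{\infty}$, so the coefficients $a^{ij}$ and $b$ are uniformly Hölder continuous in $y$ once composed with $w$. Krylov--Safonov interior $C^{1,\alpha}$ estimates (applied after differentiating the equation, or equivalently the results in Chapter~13 of Gilbarg--Trudinger for quasilinear equations with bounded gradient) then upgrade $w$ to $C^{2,\alpha}(B_{\rho/2})$ with a universal bound.

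Finally, to reach $C^{3,\alpha}$, differentiate the equation once in $y^{k}$: the function $v=\partial_{k}w$ satisfies a linear elliptic equation whose leading coefficients are $a^{ij}(y,w,\nabla w)\in C^{0,\alpha}$ and whose inhomogeneous term involves $\partial_{y^{k}}\widehat{g}$, $\partial_{y^{k}}K$, $\partial_{y^{k}}F$, and terms containing $\partial_{w}a^{ij}$, $\partial_{p_{\ell}}a^{ij}$ contracted against the already controlled $\nabla^{2}w$. The hypothesis $|\log\phi|_{C^{2,\alpha}}$ and $|F|_{C^{1,\alpha}}$ ensures this right-hand side lies in $C^{0,\alpha}$, so interior Schauder estimates give $v\in C^{2,\alpha}$, i.e.\ $w\in C^{3,\alpha}(B_{\rho})$ with a bound depending only on the listed quantities.

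The main obstacle is the very first step: converting the intrinsic curvature bound $|A|\leq c$ into a uniform graphical neighborhood with controlled $C^{1}$ norm, uniformly in $x_{0}\in\Sigma$. Once this uniform graphing radius $\rho$ is in hand, the remainder of the argument is a routine quasilinear bootstrap using the hypotheses on $\phi$ and $F$.
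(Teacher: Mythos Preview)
Your proposal is correct and follows essentially the same strategy as the paper: use the pointwise bound $|A|\leq c$ from Theorem~\ref{thm-PointwiseEsti2FF} to obtain a uniform graphical radius with $C^{2}$ control on $w$, then bootstrap via Schauder theory on the quasilinear equation \eqref{eq-ParametricForm} to reach $C^{3,\alpha}$. The only cosmetic difference is in the first step: the paper extracts the differential inequality $\sum_{i,j}(w_{y^{i}y^{j}})^{2}\leq c\bigl(1+\sum_{i}(w_{y^{i}})^{2}\bigr)^{3}$ directly from the formula for $|A|^{2}$ and then applies an ODE comparison (``simple calculus'') to bound $|w|+|\partial w|+|\partial^{2}w|$ simultaneously, whereas you phrase the same control geometrically by integrating $|\overline{\nabla}N|\leq|A|$ to bound the tilt of the normal first and then invoking $A_{ij}=\nabla_{i}\nabla_{j}W/|\nabla W|$ for the Hessian bound; these are equivalent.
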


\begin{proof}
Observe that \eqref{eq-ParametricForm} is a strictly elliptic equation for $w$ having the following structure
\begin{equation*}
\sum_{i,j=1}^{3}B^{ij}(y,w,\partial w)w_{y^{i}y^{j}}=C(y,w,\partial w),
\end{equation*}
where $B^{ij}(0,0,0)=\delta^{ij}$. Since the second fundamental form
is uniformly bounded and
\begin{equation*}
|A|^{2}=\sum_{a,b,c,d=1}^{4}\left(\widehat{g}^{ac}-\frac{W^{a}W^{c}}{|\nabla W|^{2}}\right)\left(\widehat{g}^{bd}-\frac{W^{b}W^{d}}{|\nabla W|^{2}}\right)
\left(\frac{\nabla_{a}\nabla_{b}W}{|\nabla W|}\right)\left(\frac{\nabla_{c}\nabla_{d}W}{|\nabla W|}\right),
\end{equation*}
we obtain
\begin{equation*}
\sum_{i,j=1}^{3}(w_{y^{i}y^{j}})^{2}\leq c
\left(1+\sum_{i=1}^{3}(w_{y^{i}})^{2}\right)^{3},
\end{equation*}
near $y=0$. Now by some simple calculus,
\begin{equation*}
\sup_{|y|\leq\rho}(|w(y)|+|\partial w(y)|+|\partial^{2}w(y)|)\leq c.
\end{equation*}
Therefore the $C^{3,\alpha}$ estimates follow from Schauder's
theory and the $C^{2}$ estimates above.

Lastly, we note that $\Sigma$ may be expressed as a graph for sufficiently small
(but uniform) $\rho>0$.  This follows directly from the pointwise bound on the
second fundamental form established in Theorem \ref{thm-PointwiseEsti2FF}.
\end{proof}

\section{The Harnack Inequality}\label{sec3}

In this section it will be shown that the quantity
$\langle\partial_{t},N\rangle$ satisfies a homogeneous elliptic
equation with bounded coefficients, when $|A|$ is pointwise bounded.
From this we immediately obtain the Harnack
inequality as in \cite{SchoenYau}, which is used extensively.
The proof here consists of a long calculation.
Although the resulting equation has the same structure as that of
\cite{SchoenYau}, we cannot simply cite this
reference, as the calculations carried out here must be done
in the warped product setting.

We begin by recalling that the vector fields
$X_{i}=\partial_{i}+f_{i}\partial_{t}$ form a basis for the tangent
space to the surface $\Sigma=\{t=f(x)\}$, and that the vector field
\begin{equation*}
N=\frac{f^{m}\partial_{m}-\phi^{-2}\partial_{t}}{\sqrt{\phi^{-2}
+|\nabla f|^{2}}}
\end{equation*}
is its unit normal. Then
\begin{equation*}
\langle\partial_{t},N\rangle=\frac{-1}{\sqrt{\phi^{-2}+|\nabla f|^{2}}}.
\end{equation*}
Our goal will be to calculate the Laplacian
\begin{equation*}
\overline{\Delta}\langle\partial_{t},N\rangle=
\overline{g}^{pj}\overline{\nabla}_{X_{p}}\overline{\nabla}_{X_{j}}
\langle\partial_{t},N\rangle
\end{equation*}
in terms of $\langle\partial_{t},N\rangle$.
As usual, in this notation, an over line bar indicates
that the particular geometric quantity is with respect to
the induced metric on $\Sigma$.

The following lemma may be obtained from the Jacobi equation, as indicated in \cite{AnderssonEichmairMetzger}.
However here, we carry out a different (and longer) proof, as some of the calculations can be used later.

\begin{lemma}\label{lemma-LaplacianInnerProduct}
For any $\Sigma\subset(M\times\mathbb{R}, g+\phi^{2}dt^{2})$, not necessarily satisfying \eqref{eq-BasicEquation},
\begin{equation}\label{eq-LaplacianInnerProdct}
\overline{\Delta}\langle\partial_{t},N\rangle
+(|A|^{2}+N(H)+R_{NN})\langle\partial_{t},N\rangle=0.
\end{equation}
\end{lemma}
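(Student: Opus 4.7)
The key observation is that both $g$ and $\phi$ are independent of $t$, so translation in $t$ is an isometry of the warped-product metric $\widehat{g}=g+\phi^{2}dt^{2}$; that is, $\partial_{t}$ is a Killing vector field for $\widehat{g}$. The identity \eqref{eq-LaplacianInnerProdct} is then the Jacobi-type equation satisfied by $\langle V,N\rangle$ whenever $V$ is an ambient Killing field (the $N(H)$ term appears because $\Sigma$ is not assumed to have vanishing mean curvature). Since the authors indicate they will carry out a direct computation whose pieces will be reusable later, I would do it in components rather than invoke the first variation of mean curvature abstractly.

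Let $\eta:=\langle\partial_{t},N\rangle$ and work in the tangent frame $X_{i}=\partial_{i}+f_{i}\partial_{t}$. Using the Weingarten relation $\nabla_{X_{i}}N=A_{i}{}^{j}X_{j}$, a first tangential differentiation gives
\begin{equation*}
\overline{\nabla}_{i}\eta
=\langle\nabla_{X_{i}}\partial_{t},N\rangle+A_{i}{}^{j}\langle\partial_{t},X_{j}\rangle.
\end{equation*}
I would then differentiate again tangentially and trace with $\overline{g}^{ij}$. The resulting $\nabla\partial_{t}$ contributions are rewritten using the Killing equation $\nabla_{a}(\partial_{t})_{b}+\nabla_{b}(\partial_{t})_{a}=0$, and the $\nabla\nabla\partial_{t}$ contributions using the Killing-field second-derivative identity $\nabla_{X}\nabla_{Y}\partial_{t}=R(\partial_{t},X)Y+\nabla_{\nabla_{X}Y}\partial_{t}$. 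The Codazzi equation \eqref{eq-Codazzi} converts $\overline{g}^{ij}\overline{\nabla}_{i}A_{jk}$ into $\overline{\nabla}_{k}H$ plus Riemann-tensor pieces (which, after contraction with $\partial_{t}$, will produce the $N(H)\eta$ term); and the Gauss equation \eqref{eq-GaussEq} reduces the intrinsic Ricci contribution to the ambient $R_{NN}$ plus second-fundamental-form pieces that collect into the $|A|^{2}\eta$ coefficient.

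The main obstacle is the bookkeeping. In a warped product one has $\nabla\partial_{t}\neq 0$, with components proportional to $\nabla\log\phi$, so ``error'' terms of this type enter at essentially every step. The Killing equation forces these contributions to reorganize in such a way that, after all Codazzi/Gauss substitutions, only the three clean scalar invariants $|A|^{2}$, $N(H)$, and $R_{NN}$ survive, each multiplied by $\eta$. Carefully tracing the cancellations between Weingarten, Killing, and warped-connection terms is essentially the entire content of the direct proof.
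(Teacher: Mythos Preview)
Your proposal is correct and follows essentially the same route as the paper: differentiate $\eta=\langle\partial_{t},N\rangle$ once via Weingarten, differentiate again and trace, then use Codazzi and the Killing property of $\partial_{t}$ to collect terms. The paper organizes the second step into three explicit sub-claims, each proved by writing out the warped-product Christoffel symbols $\Gamma_{i4}^{4}=(\log\phi)_{i}$, $\Gamma_{44}^{i}=-\phi\phi^{i}$ rather than invoking the abstract Killing second-derivative identity $\nabla_{X}\nabla_{Y}\partial_{t}=R(\partial_{t},X)Y+\nabla_{\nabla_{X}Y}\partial_{t}$ directly; this is what they mean by ``calculations that can be used later.'' One small correction: the Gauss equation \eqref{eq-GaussEq} is not actually needed here. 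No intrinsic Ricci term appears, because you are taking the intrinsic Hessian of a \emph{scalar}; the ambient $R_{NN}$ arises instead from decomposing $\partial_{t}=\eta N+(\text{tangential})$ inside the ambient Ricci term $Ric(N,\partial_{t})$ produced by the Codazzi step, together with the warped-product formula $Ric(N,\partial_{t})=-\eta\,\phi^{-1}\Delta\phi$.
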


\begin{proof}
First observe that
\begin{equation*}
A_{ij}=\langle X_{i},\nabla_{X_{j}}N\rangle,
\end{equation*}
so that
\begin{equation*}
\nabla_{X_{j}}N=A_{jp}\overline{g}^{pa}X_{a}.
\end{equation*}
It follows that
\begin{equation*}
\overline{\nabla}_{X_{j}}\langle\partial_{t},N\rangle=
\langle\nabla_{X_{j}}\partial_{t},N\rangle
+\langle\partial_{t},X_{m}\rangle\overline{g}^{mn}A_{jn},
\end{equation*}
and
\begin{equation*}
\overline{\nabla}_{X_{p}}\overline{\nabla}_{X_{j}}\langle\partial_{t},N\rangle
=\overline{\nabla}_{X_{p}}\langle\nabla_{X_{j}}\partial_{t},N\rangle
+\overline{g}^{mn}(\overline{\nabla}_{p}A_{jn})\langle\partial_{t},X_{m}\rangle
+\overline{g}^{mn}A_{jn}\overline{\nabla}_{X_{p}}\langle\partial_{t},X_{m}\rangle.
\end{equation*}
The Codazzi equations \eqref{eq-Codazzi} yield
\begin{equation*}
\overline{\nabla}_{p}A_{jn}=\overline{\nabla}_{n}A_{jp}+R_{NX_{j}X_{n}X_{p}}.
\end{equation*}
Moreover
\begin{align*}
\overline{\nabla}_{X_{p}}\langle\partial_{t},X_{m}\rangle &= X_{p}\langle\partial_{t},X_{m}\rangle
-\overline{\Gamma}_{pm}^{a}\langle\partial_{t},X_{a}\rangle\\
&= \langle\nabla_{X_{p}}\partial_{t},X_{m}\rangle
+\langle\partial_{t},\nabla_{X_{p}}X_{m}\rangle
-\overline{\Gamma}_{pm}^{a}\langle\partial_{t},X_{a}\rangle,
\end{align*}
and
\begin{equation*}
\nabla_{X_{p}}X_{m}=\overline{\Gamma}_{pm}^{a}X_{a}-A_{pm}N.
\end{equation*}
Thus
\begin{align}\label{eq-InnerProduct}
\begin{split}
\overline{\Delta}\langle\partial_{t},N\rangle =& \overline{g}^{pj}
\overline{\nabla}_{X_{p}}
\overline{\nabla}_{X_{j}}\langle\partial_{t},N\rangle\\
=& \overline{g}^{pj}\overline{\nabla}_{X_{p}}
\langle\nabla_{X_{j}}\partial_{t},N\rangle-|A|^{2}\langle\partial_{t},N\rangle\\
& +\overline{g}^{mn}(\overline{\nabla}_{n}H+R_{NX_{n}})
\langle\partial_{t},X_{m}\rangle
+A^{pm}\langle\nabla_{X_{p}}\partial_{t},X_{m}\rangle.
\end{split}
\end{align}
The right-hand side will be calculated term by term.

We first claim that
\begin{equation}\label{eq-Claim1}
\overline{g}^{mn}(\overline{\nabla}_{n}H+R_{NX_{n}})
\langle\partial_{t},X_{m}\rangle
=-\langle\partial_{t},N\rangle(\phi^{-1}\Delta\phi+N(H)+R_{NN}).
\end{equation}
To prove this, write
\begin{equation*}
\partial_{t}=\langle\partial_{t},N\rangle N
+\overline{g}^{mn}\langle\partial_{t},X_{m}\rangle X_{n}
\end{equation*}
and observe that
\begin{equation*}
\overline{g}^{mn}(\overline{\nabla}_{n}H+R_{NX_{n}})
\langle\partial_{t},X_{m}\rangle
=Ric(N,\partial_{t})+\partial_{t}H-\langle\partial_{t},N\rangle(R_{NN}+N(H)).
\end{equation*}
The desired result follows since $\partial_{t}H=0$ and
\begin{equation*}
Ric(N,\partial_{t})=-\langle\partial_{t},N\rangle\phi^{-1}\Delta\phi.
\end{equation*}
To see this last assertion, use the fact that
\begin{equation}\label{eq-ChristofolSymbols}
\Gamma_{44}^{4}=\Gamma_{ij}^{4}=\Gamma_{i4}^{j}=0,\text{ }\text{ }
\text{ }\text{ }\Gamma_{i4}^{4}=(\log\phi)_{i},\text{ }\text{ }\text{ }\text{ }
\Gamma_{44}^{i}=-\phi\phi^{i},
\end{equation}
to calculate
\begin{equation*}
R_{4ijk}=0,\text{ }\text{ }\text{ }\text{ }R_{4i4j}=-\phi\nabla_{ij}\phi.
\end{equation*}
Here 4 indicates the $t$-coordinate. This implies that
\begin{equation*}
R_{4i}=0,\text{ }\text{ }\text{ }\text{ }R_{44}=-\phi\Delta\phi,
\end{equation*}
and hence
\begin{equation*}
Ric(N,\partial_{t})=-\langle\partial_{t},N\rangle Ric(f^{m}\partial_{m}-\phi^{-2}\partial_{t},\partial_{t})
=-\langle\partial_{t},N\rangle \phi^{-1}\Delta\phi.
\end{equation*}
This finishes the proof of \eqref{eq-Claim1}.

Next, with the help of \eqref{eq-ChristofolSymbols},
a straightforward calculation yields
\begin{equation*}
\nabla_{X_{p}}\partial_{t}=(\log\phi)_{p}\partial_{t}-\phi f_{p}\phi^{m}\partial_{m}.
\end{equation*}
Therefore
\begin{equation*}
\langle\nabla_{X_{p}}\partial_{t},X_{m}\rangle=\phi(f_{m}\phi_{p}-f_{p}\phi_{m}).
\end{equation*}
Since this is an antisymmetric tensor, and $A^{pm}$ is symmetric, we obtain
\begin{equation}\label{eq-Claim2}
A^{pm}\langle\nabla_{X_{p}}\partial_{t},X_{m}\rangle=0.
\end{equation}
Alternatively, since $\partial_{t}$ is a Killing field, it follows immediately that
$\langle\nabla_{X_{p}}\partial_{t},X_{m}\rangle$ is antisymmetric.

Lastly, we claim that
\begin{equation}\label{eq-Claim3}
\overline{g}^{pj}\overline{\nabla}_{X_{p}}
\langle\nabla_{X_{j}}\partial_{t},N\rangle=
\langle\partial_{t},N\rangle \phi^{-1}\Delta\phi.
\end{equation}
First observe that since $\partial_{t}$ is a Killing field,
\begin{equation*}
\langle\nabla_{X_{j}}\partial_{t},N\rangle=
-\langle\nabla_{N}\partial_{t},X_{j}\rangle.
\end{equation*}
Furthermore
\begin{equation*}
\nabla_{N}\partial_{t}=\nabla_{\partial_{t}}N
-L_{\partial_{t}}N=\nabla_{\partial_{t}}N,
\end{equation*}
where $L$ denotes Lie differentiation. It follows that
\begin{equation*}
\langle\nabla_{X_{j}}\partial_{t},N\rangle=
-\langle\nabla_{\partial_{t}}N,X_{j}\rangle.
\end{equation*}
Now calculate
\begin{align*}
\nabla_{\partial_{t}}N &= (\phi^{-2}+|\nabla f|^{2})^{-1/2}
(f^{i}\Gamma_{i4}^{a}\partial_{a}
-\phi^{-2}\Gamma_{44}^{a}\partial_{a})\\
&= -\langle\partial_{t},N
\rangle(f^{i}(\log\phi)_{i}\partial_{t}+\phi^{-1}\phi^{i}\partial_{i})\\
&= -\langle\partial_{t},N\rangle(\log\phi)^{i}X_{i},
\end{align*}
to find
\begin{equation*}
\overline{g}^{pj}\langle\nabla_{\partial_{t}}N,X_{j}\rangle
=-\langle\partial_{t},N\rangle(\log\phi)^{p}.
\end{equation*}
We then have
\begin{align*}
\overline{\nabla}_{X_{p}}(\overline{g}^{pj}
\langle\nabla_{\partial_{t}}N,X_{j}\rangle)
&= X_{p}(\overline{g}^{pj}\langle\nabla_{\partial_{t}}N,X_{j}\rangle)
+\overline{\Gamma}_{pn}^{p}\overline{g}^{nj}
\langle\nabla_{\partial_{t}}N,X_{j}\rangle\\
&= -(\partial_{p}\langle\partial_{t},N\rangle)(\log\phi)^{p}
-\langle\partial_{t},N\rangle\partial_{p}(\log\phi)^{p}
-\langle\partial_{t},N\rangle\overline{\Gamma}_{pn}^{p}(\log\phi)^{n}\\
&= -\langle\partial_{t},N\rangle\Delta(\log\phi)
+\langle\partial_{t},N\rangle(\Gamma_{pn}^{p}
-\overline{\Gamma}_{pn}^{p})(\log\phi)^{n}
-(\partial_{p}\langle\partial_{t},N\rangle)(\log\phi)^{p}.
\end{align*}
According to a calculation in \cite{BrayKhuri} (page 760),
\begin{equation*}
\Gamma_{pj}^{n}-\overline{\Gamma}_{pj}^{n}
=\phi\phi^{n}f_{p}f_{j}-\frac{f^{n}A_{pj}}{\sqrt{\phi^{-2}+|\nabla f|^{2}}}.
\end{equation*}
Moreover, by \eqref{eq-SecondFF},
\begin{align*}
-\partial_{p}\langle\partial_{t},N\rangle &= \partial_{p}(\phi^{-2}
+|\nabla f|^{2})^{-1/2}\\
&= \frac{\phi^{-3}\phi_{p}-f^{n}\nabla_{pn}f}{(\phi^{-2}
+|\nabla f|^{2})^{3/2}}\\
&= \frac{\phi^{-3}\phi_{p}}{(\phi^{-2}+|\nabla f|^{2})^{3/2}}
-\frac{f^{n}A_{pn}}{\phi^{-2}+|\nabla f|^{2}}
+\frac{f^{n}((\log\phi)_{p}f_{n}
+(\log\phi)_{n}f_{p}+\phi f^{m}\phi_{m}f_{p}f_{n})}
{(\phi^{-2}+|\nabla f|^{2})^{3/2}}.
\end{align*}
Therefore
\begin{align*}
& \langle\partial_{t},N\rangle(\Gamma_{pn}^{p}
-\overline{\Gamma}_{pn}^{p})(\log\phi)^{n}
-(\partial_{p}\langle\partial_{t},N\rangle)(\log\phi)^{p}\\
=& -\frac{(\log\phi)^{n}\phi\phi^{m}f_{m}f_{n}}{(\phi^{-2}+|\nabla f|^{2})^{1/2}}
+\frac{(\log\phi)^{p}\phi^{-3}\phi_{p}}{(\phi^{-2}+|\nabla f|^{2})^{3/2}}\\
& +\frac{(\log\phi)^{p}f^{n}[(\log\phi)_{p}f_{n}+(\log\phi)_{n}f_{p}+\phi f^{m}\phi_{m}f_{p}f_{n}]}
{(\phi^{-2}+|\nabla f|^{2})^{3/2}}\\
=& \frac{|\nabla\log\phi|^{2}}{(\phi^{-2}+|\nabla f|^{2})^{1/2}},
\end{align*}
and \eqref{eq-Claim3} follows.

The desired identity is now obtained by substituting \eqref{eq-Claim1}, \eqref{eq-Claim2}, and
\eqref{eq-Claim3}, into \eqref{eq-InnerProduct}.
\end{proof}

We are now ready to prove the Harnack inequality.

\begin{theorem}\label{thm-HarnackInequality}
Suppose that the surface $\Sigma$ satisfies \eqref{eq-BasicEquation}.
Then there exist constants $\rho>0$ and $c$ independent of $\Sigma$, but dependent on $|\log\phi|_{C^{2,\alpha}}$ and $|F|_{C^{1,\alpha}}$, such that
\begin{equation}\label{eq-HarnackIneq}
\sup_{\Sigma\cap B^{4}_{\rho}(x_{0})}
\langle\partial_{t},-N\rangle\leq c\inf_{\Sigma\cap B^{4}_{\rho}(x_{0})}\langle\partial_{t},-N\rangle
\end{equation}
and
\begin{equation}\label{eq-DifferentialHarnack}
\sup_{\Sigma\cap B^{4}_{\rho}(x_{0})}|\overline{\nabla}
\log\langle\partial_{t},-N\rangle|\leq c.
\end{equation}
\end{theorem}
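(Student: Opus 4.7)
The plan is to combine Lemma \ref{lemma-LaplacianInnerProduct} with the pointwise estimates already established in Section \ref{sec2}, and then invoke the classical linear elliptic Harnack inequality in parametric coordinates. Set $u := -\langle \partial_t, N\rangle = 1/\sqrt{\phi^{-2}+|\nabla f|^2}>0$, so Lemma \ref{lemma-LaplacianInnerProduct} becomes the linear homogeneous second-order equation
\[
\overline{\Delta}\, u + V u = 0, \qquad V := |A|^2 + N(H) + R_{NN},
\]
on $\Sigma$, where $R_{NN}$ denotes the ambient Ricci curvature of $(M\times\mathbb{R},\, g+\phi^2 dt^2)$ in the direction of $N$.

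The first task is to show that the potential $V$ is uniformly bounded in $L^\infty$. The term $|A|^2$ is handled directly by Theorem \ref{thm-PointwiseEsti2FF}. Using the ambient Christoffel symbols \eqref{eq-ChristofolSymbols}, $R_{NN}$ is expressible in terms of $\operatorname{Ric}_g$ and second derivatives of $\log\phi$, and is therefore controlled by the hypotheses $|\log\phi|_{C^{2,\alpha}}$. The delicate term is $N(H)$, since $H$ a priori lives only on $\Sigma$, and a naive attempt to extract $N(H)$ from $\partial_t H = 0$ using Corollary \ref{thm-PointwiseEstiGradientMeanCurv} would force us to divide by $|\langle\partial_t,N\rangle|$, which degenerates wherever $|\nabla f|$ is large. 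We circumvent this by invoking the parametric estimates of Theorem \ref{thm-ParametricEstimates}: on a uniform geodesic $4$-ball around $x_0$, $\Sigma$ is the graph $y^4=w(y)$ of a function with $\|w\|_{C^{3,\alpha}}\le c$, so $H$, viewed as a function of $y$, lies in $C^{1,\alpha}$ with a uniform bound. Extending $H$ to the tubular neighborhood by $\partial_t$-invariance, which is natural because $\partial_t$ is Killing, and computing its derivative along $N$ in these coordinates yields a uniform bound on $N(H)$, and hence on $V$.

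With $V\in L^\infty$ in hand, Theorem \ref{thm-ParametricEstimates} also allows us to rewrite $\overline{\Delta}\, u + V u = 0$ in the Euclidean parameters $y=(y^1,y^2,y^3)$ as a linear uniformly elliptic equation
\[
a^{ij}(y)\,\partial_i\partial_j u + b^i(y)\,\partial_i u + V(y)\, u = 0
\]
on a fixed ball $B_\rho\subset\mathbb{R}^3$, with $a^{ij}\in C^{0,\alpha}$ and $b^i,V\in L^\infty$ uniformly. Since $u>0$, the classical Moser (or Krylov--Safonov) Harnack inequality for positive solutions immediately yields \eqref{eq-HarnackIneq}. For the differential Harnack bound \eqref{eq-DifferentialHarnack}, interior Schauder estimates applied to the same linear equation give $\|\overline{\nabla} u\|_{L^\infty(B_{\rho/2})}\le c \sup_{B_\rho} u$, and combining this with the lower Harnack bound $\inf_{B_\rho} u \ge c^{-1}\sup_{B_\rho} u$ produces $|\overline{\nabla} u|/u\le c$. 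The genuine difficulty in this whole argument is the bound on $N(H)$; it is precisely the parametric $C^{3,\alpha}$ estimate of Theorem \ref{thm-ParametricEstimates} that makes this term accessible and allows the Harnack machinery to be applied in a setting where $H$ is intrinsic to the moving hypersurface $\Sigma$.
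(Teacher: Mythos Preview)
Your argument is correct and follows the same overall route as the paper: use Lemma \ref{lemma-LaplacianInnerProduct} to obtain a linear elliptic equation for $u=-\langle\partial_t,N\rangle$, verify that the potential $V=|A|^2+N(H)+R_{NN}$ is bounded, pass to the parametric chart of Theorem \ref{thm-ParametricEstimates} to get uniform ellipticity, and then invoke the classical Harnack inequality together with the Schauder gradient estimate.

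The only substantive difference is your treatment of $N(H)$, and here you have made the step harder than necessary. The paper does not go through the $C^{3,\alpha}$ parametric estimates for this term; it simply writes out
\[
N(H)=\frac{\phi f^{m}\partial_{m}H}{\sqrt{1+\phi^{2}|\nabla f|^{2}}},
\]
using $\partial_t H=0$, and observes that $\frac{\phi|\nabla f|}{\sqrt{1+\phi^{2}|\nabla f|^{2}}}\le 1$, so $|N(H)|$ is controlled by the coordinate derivatives $\partial_m H$, which are bounded by (the proof of) Lemma \ref{lemma-MeanCurvature} and Corollary \ref{thm-PointwiseEstiGradientMeanCurv}. Your worry that one is ``forced to divide by $|\langle\partial_t,N\rangle|$'' applies only if one first decomposes $\partial_t$ into its tangential and normal parts and then solves for $N(H)$; the explicit formula for $N$ in terms of $\partial_m$ and $\partial_t$ avoids this entirely. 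Your detour through Theorem \ref{thm-ParametricEstimates} certainly works, but the paper's computation is shorter and uses only the pointwise estimates of Section \ref{sec2}.
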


\begin{proof} The estimates of Theorem \ref{thm-ParametricEstimates}
guarantee that equation \eqref{eq-LaplacianInnerProdct} is
uniformly elliptic for small $\rho$.
Next, we note that the coefficients of \eqref{eq-LaplacianInnerProdct}
are bounded. To see this, observe that $R_{NN}$ poses no problem, and
$|A|^2$ is bounded by Theorem \ref{thm-PointwiseEsti2FF}. Moreover
\begin{equation*}
N(H)=\frac{\phi f^{m}\partial_{m}H}{\sqrt{1+\phi^{2}|\nabla f|^{2}}},
\end{equation*}
so that $N(H)$ is bounded by Corollary \ref{thm-PointwiseEstiGradientMeanCurv}.

The Harnack inequality \eqref{eq-HarnackIneq} now follows
immediately.
Standard elliptic theory \cite{GilbargTrudinger} also guarantees that
\begin{equation*}
\sup_{B_{\frac{\rho}{2}}}|\overline{\nabla}\langle\partial_{t},N\rangle|\leq c\sup_{B_{\rho}}|\langle\partial_{t},N\rangle|.
\end{equation*}
Combining this with \eqref{eq-HarnackIneq} yields
\begin{equation*}
\sup_{B_{\frac{\rho}{2}}}|\overline{\nabla}\langle\partial_{t},N\rangle|\leq c\inf_{B_{\rho}}|\langle\partial_{t},N\rangle|\leq c\inf_{B_{\frac{\rho}{2}}}|\langle\partial_{t},N\rangle|,
\end{equation*}
from which \eqref{eq-DifferentialHarnack} follows.
\end{proof}

\section{Global $C^{1}$ Bounds and Existence for the Regularized Equation}\label{sec4}

Suppose that the boundary of $M$ is an outermost future apparent
horizon with one component, that is
\begin{equation}\label{131}
\theta_{+}(\partial M)=0
\end{equation}
where the null expansion $\theta_{+}$ was defined in \eqref{9}.
All arguments to follow may be easily extended to
the general case in which the boundary is an outermost apparent
horizon, with several future and past horizon components.
Take a sufficiently large coordinate sphere $\partial_{\infty}M$
in the asymptotically flat end such that
\begin{equation}\label{132}
\theta_{+}(\partial_{\infty}M)>0.
\end{equation}
Following \cite{AnderssonMetzger} and \cite{Metzger} we construct
an extension $(\widetilde{M},\widetilde{g},\widetilde{k})$, of the
initial data $(M,g,k)$, having
the following properties:

\noindent i) $M\subset\widetilde{M}$ with $\widetilde{g}|_{M}=g$,
$\widetilde{k}|_{M}=k$, and $\partial_{\infty}\widetilde{M}=
\partial_{\infty}M$,

\noindent ii) $\widetilde{g}$ is smooth across $\partial M$ and
$\widetilde{k}$ is $C^{1,1}$ across $\partial M$,

\noindent iii) the region $\widetilde{M}-M$ is foliated by
surfaces $S_{\sigma}$, $\sigma\in[0,\sigma_{0})$, with $\theta_{+}(S_{\sigma})<0$, $S_{0}=\partial\widetilde{M}$,
and $S_{\sigma_{0}}=\partial M$,

\noindent iv) for all sufficiently small $\sigma$,
$\theta_{+}(S_{\sigma})\leq-\vartheta<0$ and $\widetilde{k}\equiv 0$.

For convenience, in what follows, we will denote $\widetilde{g}$
by $g$ and $\widetilde{k}$ by $k$.
Moreover, we will extend the warping factor $\phi$ to be positive
on all of $\widetilde{M}$.
These constructions allow for the existence of appropriate barriers,
which in turn leads to a solution for the following Dirichlet
problem
\begin{equation}\label{133}
H(f_{\varepsilon})-K(f_{\varepsilon})=\varepsilon f_{\varepsilon}
\text{ }\text{ }\text{ on }\text{ }\text{ }\widetilde{M},
\end{equation}
\begin{equation}\label{134}
f_{\varepsilon}=\frac{\vartheta}{2\varepsilon}\text{ }\text{ }
\text{ on }\text{ }\text{ }\partial\widetilde{M},\text{ }
\text{ }\text{ }\text{ }
f_{\varepsilon}=0\text{ }\text{ }\text{ on }
\text{ }\text{ }\partial_{\infty}\widetilde{M}.
\end{equation}

The first step is to obtain global $C^{1}$ bounds. Observe
that a direct application of the maximum principle yields the $C^{0}$ bound
\begin{equation}\label{135}
\sup_{\widetilde{M}}|f_{\varepsilon}|\leq c\varepsilon^{-1},
\end{equation}
where the constant $c$ depends on $|k|_{C^{0}}$ and $\vartheta$.
In order to apply the maximum principle to obtain bounds on
first derivatives,
we need to establish the boundary gradient estimates. Let $\tau(x)=dist(x,\partial\widetilde{M})$, and denote by $S_{\tau}$
the level sets of the
geodesic flow $\partial_{\tau}=n$ emanating from
$\partial\widetilde{M}$, where $n$ is the unit outer normal
(pointing towards spatial infinity) of the
surfaces $S_{\tau}$. The barrier functions $\psi_{\pm}(\tau)$
will be functions of $\tau$ alone. In order to find sub and
super solutions notice that
\begin{align*}\label{136}
\begin{split}
& \left(g^{ij}-
\frac{\phi^{2}\psi^{i}\psi^{j}}{1+\phi^{2}|\nabla\psi|^{2}}\right)
\left(\frac{\phi\nabla_{ij}\psi+\phi_{i}\psi_{j}+\phi_{j}\psi_{i}}
{\sqrt{1+\phi^{2}|\nabla\psi|^{2}}}\right)\\
=&\left(1-\frac{\phi^{2}\psi'^{2}}{1+\phi^{2}\psi'^{2}}\right)
\left(\frac{\phi\psi''+2\phi'\psi'}{\sqrt{1+\phi^{2}\psi'^{2}}}\right)
+(g|_{S_{\tau}})^{ij}\frac{\phi\nabla_{ij}\psi}{\sqrt{1+\phi^{2}\psi'^{2}}},
\end{split}
\end{align*}
and
\begin{equation*}\label{137}
\left(g^{ij}
-\frac{\phi^{2}\psi^{i}\psi^{j}}{1+\phi^{2}|\nabla\psi|^{2}}\right)k_{ij}=
\left(1-\frac{\phi^{2}\psi'^{2}}{1+\phi^{2}\psi'^{2}}\right)k_{nn}
+Tr_{S_{\tau}}k,
\end{equation*}
where $\psi'=\frac{d\psi}{d\tau}$. Moreover
\begin{equation*}\label{138}
\nabla_{ij}\psi=\partial_{ij}\psi-\Gamma_{ij}^{m}\partial_{m}\psi=
\nabla_{ij}^{S_{\tau}}\psi-\Gamma_{ij}^{n}\partial_{n}\psi
=\nabla_{ij}^{S_{\tau}}\psi+A^{S_{\tau}}_{ij}\partial_{n}\psi,
\end{equation*}
where $\nabla^{S_{\tau}}$ denotes the induced connection on
$S_{\tau}$ and $A^{S_{\tau}}$ is its second fundamental form.
Therefore
\begin{align}\label{139}
\begin{split}
& H(\psi)-K(\psi)-\varepsilon\psi\\
=& \frac{\phi\psi''+2\phi'\psi'}{(1+\phi^{2}\psi'^{2})^{3/2}}
+\frac{\phi\psi'H_{S_{\tau}}}{\sqrt{1+\phi^{2}\psi'^{2}}}
-\frac{k_{nn}}{1+\phi^{2}\psi'^{2}}-Tr_{S_{\tau}}k-\varepsilon\psi\\
=& \frac{\phi\psi''+2\phi'\psi'}{(1+\phi^{2}\psi'^{2})^{3/2}}
+\frac{\phi\psi'\theta_{+}(S_{\tau})}{\sqrt{1+\phi^{2}\psi'^{2}}}
-\frac{k_{nn}}{1+\phi^{2}\psi'^{2}}
-\left(1+\frac{\phi\psi'}{\sqrt{1+\phi^{2}\psi'^{2}}}\right)
Tr_{S_{\tau}}k-\varepsilon\psi.
\end{split}
\end{align}
Let $\psi_{-}(\tau)=a-b\tau$ where $a$ and $b$ will be chosen
appropriately and positive. In order that $\psi_{-}$ agree with
$f_{\varepsilon}$
at $\partial\widetilde{M}$ we set $a=\frac{\vartheta}{2\varepsilon}$.
Then for $\tau\in[0,\tau_{0}]$, with $\tau_{0}$ sufficiently small,
\begin{equation*}\label{140}
H(\psi_{-})-K(\psi_{-})
-\varepsilon\psi_{-}\geq \vartheta-\varepsilon a+\varepsilon b\tau+O(b^{-1})\geq\frac{\vartheta}{2}+O(b^{-1})\geq 0,
\end{equation*}
if $b$ is chosen large enough depending on $\phi$. Similarly an
upper barrier may be constructed in the form $\psi_{+}(\tau)=a+b\tau$.
Note that in the
case of an upper barrier we use the fact that $k\equiv 0$ near
$\partial\widetilde{M}$ to deal with the $Tr_{S_{\tau}}k$ term.
Now choose
$b$ so large that $a+b\tau_{0}\geq\sup_{\widetilde{M}}|f_{\varepsilon}|$
and $a-b\tau_{0}\leq-\sup_{\widetilde{M}}|f_{\varepsilon}|$, then by
a standard
comparison argument $\psi_{-}\leq f_{\varepsilon}\leq\psi_{+}$ for
$\tau\in[0,\tau_{0}]$. It follows that
\begin{equation}\label{141}
\sup_{\partial\widetilde{M}}|\nabla f_{\varepsilon}|\leq b\leq
c\varepsilon^{-1}.
\end{equation}
Moreover, a standard barrier construction at
$\partial_{\infty}\widetilde{M}$ also yields a boundary gradient estimate.

We now use the Bernstein method to obtain global $C^{1}$ estimates.
For convenience we temporarily drop the subscript $\varepsilon$ from
$f_{\varepsilon}$.
Differentiate equation \eqref{133} with respect to $\partial_{p}$ to find
\begin{align}\label{142}
\begin{split}
&\, \overline{g}^{ij}\left[\frac{\phi_{p}f_{;ij}+\phi f_{;ijp}+\phi_{;ip}f_{j}+\phi_{i}f_{;jp}+\phi_{;jp}f_{i}+\phi_{j}f_{;ip}}
{\sqrt{1+\phi^{2}|\nabla f|^{2}}}\right]\\
&\,\, -\overline{g}^{ij}\left[\frac{(\phi f_{;ij}+\phi_{i}f_{j}+\phi_{j}f_{i})(\phi\phi_{p}|\nabla f|^{2}
+\phi^{2}f_{;mp}f^{m})}
{(1+\phi^{2}|\nabla f|^{2})^{3/2}}-k_{ij;p}\right]\\
& \,\,-2\left[\frac{\phi f_{;ij}+\phi_{i}f_{j}+\phi_{j}f_{i}}{\sqrt{1+\phi^{2}|\nabla f|^{2}}}-k_{ij}\right]
\left[\frac{\phi\phi_{p}f^{i}f^{j}+\phi^{2}f_{;p}^{\!\text{ }\text{ }i}f^{j}}{1+\phi^{2}|\nabla f|^{2}}
-\frac{\phi^{2}f^{i}f^{j}(\phi\phi_{p}|\nabla f|^{2}
+\phi^{2}f_{;mp}f^{m})}{(1+\phi^{2}|\nabla f|^{2})^{2}}\right]\\
=&\,\varepsilon f_{p}.
\end{split}
\end{align}
Notice that the Ricci commutation formula yields
\begin{align*}\label{143}
\begin{split}
\overline{g}^{ij}\phi f^{p}f_{;ijp} &= \overline{g}^{ij}\phi f^{p}
(f_{;pij}+R^{m}_{\text{ }\text{ }ijp}f_{m})\\
&= \nabla_{j}(\overline{g}^{ij}\phi f^{p}f_{;pi})
-\overline{g}^{ij}\phi f_{;i}^{\!\text{ }\text{ }p}f_{;pj}-\nabla_{j}(\overline{g}^{ij}\phi)f^{p}f_{;pi}
+\overline{g}^{ij}\phi f^{p}f_{m}R^{m}_{\text{ }\text{ }ijp}.
\end{split}
\end{align*}
Thus, if $u=|\nabla f|^{2}$, then \eqref{142} implies that
\begin{equation*}\label{144}
\frac{\nabla _{j}(\phi\overline{g}^{ij} u_{i})}{2\sqrt{1+\phi^{2}|\nabla f|^{2}}}+B^{i}u_{i}+Bu^{1/2}
+\frac{\overline{g}^{ij}(f^{p}\phi_{p}f_{;ij}-\phi f_{;i}^{\!\text{ }\text{ }p}f_{;pj})}{\sqrt{1+\phi^{2}|\nabla f|^{2}}}
\geq\varepsilon u,
\end{equation*}
for some coefficients $B^{i}$ and $B$, with $B$ bounded.
Moreover at a critical point for $u$,
\begin{equation*}\label{145}
\overline{g}^{ij}(f^{p}\phi_{p}f_{;ij}-\phi f_{;i}^{\!\text{ }\text{ }p}f_{;pj})=g^{ij}(f^{p}\phi_{p}f_{;ij}
-\phi f_{;i}^{\!\text{ }\text{ }p}f_{;pj})
\leq-\frac{1}{2}\phi|\nabla^{2}f|^{2}+cu.
\end{equation*}
An alternate approach to deal with the $\overline{g}^{ij}f_{;ij}$
term is to solve for it in \eqref{133}, leaving
a manageable number of
first derivatives of $f$. Hence from the maximum principle we obtain
\begin{equation}\label{146}
\sup_{\widetilde{M}}|\nabla f_{\varepsilon}|\leq c\varepsilon^{-1}.
\end{equation}

With the global $C^{1}$ bounds the equation \eqref{133} is
uniformly elliptic. Standard theory \cite{GilbargTrudinger} then gives
$C^{1,\alpha}$ estimates up to the boundary. The Schauder estimates
may now be applied to obtain global $C^{2,\alpha}$ bounds. Thus we
may apply the continuity method, as in \cite{AnderssonMetzger},
to the family of equations
\begin{equation}\label{147}
H(f_{\varrho,\varepsilon})-\varrho K(f_{\varrho,\varepsilon})
=\varepsilon f_{\varrho,\varepsilon},\text{ }\text{ }\text{ }\text{ }0
\leq \varrho\leq 1,
\end{equation}
\begin{equation}\label{148}
f_{\varrho,\varepsilon}=\frac{\varrho\vartheta}{2\varepsilon}
\text{ }\text{ }\text{ on }\text{ }\text{ }\partial\widetilde{M},
\text{ }\text{ }\text{ }\text{ }
f_{\varrho,\varepsilon}=0\text{ }\text{ }\text{ on }\text{ }
\text{ }\partial_{\infty}\widetilde{M},
\end{equation}
to obtain a solution of boundary value problem \eqref{133},
\eqref{134}. Furthermore, by sending $\partial_{\infty}\widetilde{M}$ to infinity
we obtain a solution on all of $\widetilde{M}$ with the usual decay
\eqref{11}. Lastly, note that the global estimates of this section
also hold
when $M$ does not have boundary; of course they are easier to prove
in this case, as there is no need for an extension $\widetilde{M}$
or boundary gradient estimates.
Thus, in the case that $M$ does not have boundary, we may set
$F=\varepsilon f_{\varepsilon}$ and apply the results of Sections
\ref{sec2} and \ref{sec3},
as well as the global estimates of this section, and let
$\varepsilon\rightarrow 0$ to obtain the following result
(see \cite{SchoenYau} for details).

\begin{theorem}\label{thm1}
Suppose that $(M,g,k)$ is a smooth, complete, asymptotically flat
initial data set, and that $\phi$ is smooth, strictly positive,
and satisfies \eqref{10}.
Then there exist disjoint open sets $\Omega_{+},\Omega_{-}\subset M$,
and a smooth function $f:M-(\Omega_{+}\cup\Omega_{-})
\rightarrow\mathbb{R}$
satisfying
the generalized Jang equation \eqref{7} as well as \eqref{11}.
Furthermore $\partial\Omega_{+}$ ($\partial\Omega_{-}$) is a future
(past) apparent horizon
with $f(x)\rightarrow\pm\infty$ as $x\rightarrow\partial\Omega_{\pm}$.
More precisely $\mathrm{graph}(f)$ is asymptotic to the cylinders $\partial\Omega_{+}\times\mathbb{R}_{+}$
and $\partial\Omega_{-}\times\mathbb{R}_{-}$.
\end{theorem}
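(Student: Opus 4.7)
The plan is to construct $f$ as a subsequential limit, as $\varepsilon_{j}\to 0$, of solutions $f_{\varepsilon}$ of the regularized equation
\[
H-TrK=\varepsilon f_{\varepsilon}\quad\text{on }M,
\]
and then to identify the blow-up loci of $f_{\varepsilon_{j}}$ as the open sets $\Omega_{\pm}$, whose boundaries will eventually be shown to be apparent horizon components.

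First, for each fixed $\varepsilon>0$ I would exhaust $M$ by large coordinate balls $B_{R}$ in the asymptotic end and solve the Dirichlet problem with $f_{\varepsilon,R}=0$ on $\partial B_{R}$ by the continuity method of Section~\ref{sec4}. Since $M$ has no boundary, no interior Dirichlet data are required; the $C^{0}$ bound $\|f_{\varepsilon,R}\|_{C^{0}}\le C\varepsilon^{-1}$, barrier estimates at $\partial B_{R}$ and at spatial infinity, the Bernstein global $C^{1}$ estimate, and Schauder theory give a $C^{2,\alpha}$ solution on $B_{R}$, and sending $R\to\infty$ yields $f_{\varepsilon}\in C^{2,\alpha}(M)$ with the decay~\eqref{11}. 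Writing $F=\varepsilon f_{\varepsilon}$, the identity $N(f_{\varepsilon})=\phi|\nabla f_{\varepsilon}|^{2}/\sqrt{1+\phi^{2}|\nabla f_{\varepsilon}|^{2}}\le\phi|\nabla f_{\varepsilon}|$ combined with $\varepsilon\|f_{\varepsilon}\|_{C^{0}}+\varepsilon\|\nabla f_{\varepsilon}\|_{C^{0}}\le C$ shows that $|F|$ and $|N(F)|$ are bounded independent of $\varepsilon$. Consequently Theorems~\ref{thm-PointwiseEsti2FF}, \ref{thm-ParametricEstimates}, and \ref{thm-HarnackInequality} apply with $\varepsilon$-uniform constants, providing uniform pointwise bounds on $|A_{\varepsilon}|$ and $|\overline{\nabla}H_{\varepsilon}|$, uniform parametric $C^{3,\alpha}$ estimates for the graphs $\Sigma_{\varepsilon}$, and a uniform Harnack inequality for the positive function $v_{\varepsilon}:=\langle\partial_{t},-N_{\varepsilon}\rangle$.

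Next, choosing a subsequence $\varepsilon_{j}\to 0$, I would run the Harnack dichotomy: at each $p\in M$ either $v_{\varepsilon_{j}}(p)\to 0$ or $\inf_{j}v_{\varepsilon_{j}}(p)>0$, and by the uniform Harnack inequality on geodesic balls the former condition propagates over components, defining an open set $\Omega\subset M$. On $M\setminus\overline{\Omega}$ the function $v_{\varepsilon_{j}}$ is locally bounded away from zero, which forces $|\nabla f_{\varepsilon_{j}}|$ to be locally uniformly bounded; after normalizing $f_{\varepsilon_{j}}$ by an additive constant on each connected component, the uniform parametric $C^{3,\alpha}$ bounds extract, along a further subsequence, a smooth limit $f$ solving~\eqref{7} on $M\setminus\overline{\Omega}$ and inheriting the decay~\eqref{11} from the asymptotic barriers. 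Splitting $\Omega=\Omega_{+}\cup\Omega_{-}$ disjointly, according to whether $f_{\varepsilon_{j}}\to+\infty$ or $-\infty$ on each component, produces the two required sets.

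The main obstacle is identifying $\partial\Omega_{\pm}$ with apparent horizon components of the correct type and establishing the cylindrical asymptotics. The uniform pointwise bound on $|A_{\varepsilon}|$ guarantees that the graphs $\Sigma_{\varepsilon_{j}}$, translated vertically by appropriate constants, converge in the pointed $C^{2,\alpha}$ topology to complete embedded surfaces in $(M\times\mathbb{R},g+\phi^{2}dt^{2})$. On the region where $v_{\varepsilon_{j}}\to 0$ the limit unit normal is horizontal, so each connected limit surface must be a vertical cylinder $S\times\mathbb{R}$ over some $S\subset M$. Substituting this cylindrical ansatz into~\eqref{3} with the extension~\eqref{4}, using that $\nabla f_{\varepsilon_{j}}/|\nabla f_{\varepsilon_{j}}|$ converges to the inward unit normal of $S$ and that $|\nabla f_{\varepsilon_{j}}|\to\infty$, a direct computation reduces $H-TrK=0$ on the cylinder to $\theta_{\pm}(S)=0$ (with sign determined by the orientation associated with $\Omega_{\pm}$), so that $S=\partial\Omega_{\pm}$ is an apparent horizon component of the requisite type, and the asserted cylindrical asymptotics of $\mathrm{graph}(f)$ follow from the $C^{2,\alpha}$ graphical convergence. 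Subsidiary points---independence of $\Omega_{\pm}$ from the extracted subsequence (via monotonicity of $f_{\varepsilon}$ in $\varepsilon$ from the maximum principle), smoothness of $\partial\Omega_{\pm}$, and its outermost character---are handled along the lines of \cite{SchoenYau} and \cite{Metzger}.
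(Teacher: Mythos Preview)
Your proposal is correct and follows essentially the same route as the paper, which (after establishing the $\varepsilon$-regularized solutions and the uniform estimates of Sections~\ref{sec2}--\ref{sec4}) simply sets $F=\varepsilon f_{\varepsilon}$ and defers the limiting argument to \cite{SchoenYau}. One small slip: the inequality $N(f_{\varepsilon})\le\phi|\nabla f_{\varepsilon}|$ can fail when $\phi<1$ and $|\nabla f_{\varepsilon}|$ is large, but the correct bound $N(f_{\varepsilon})\le|\nabla f_{\varepsilon}|$ (from $\phi|\nabla f|/\sqrt{1+\phi^{2}|\nabla f|^{2}}\le 1$) still yields $|N(F)|\le\varepsilon|\nabla f_{\varepsilon}|\le C$, so your conclusion stands.
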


\section{Blow-Up Solutions for the Generalized Jang Equation}\label{sec5}

Consider the solutions $f_{\varepsilon}$ given in the previous section,
defined on $\widetilde{M}$ and with fall-off \eqref{11} at spatial
infinity.  Our goal in the current section is to produce solutions which blow-up at the
outermost apparent horizon boundary of $M$, by letting
$\varepsilon\rightarrow 0$. We will also
construct appropriate super solutions in order to obtain an estimate
for the rate of blow-up. As in the previous section we will assume here that
the boundary of $M$ is an outermost future apparent horizon with one component.
All arguments to follow may be easily extended to
the general case in which the boundary is an outermost apparent
horizon, with several future and past horizon components.

Observe that the gradient estimate
\eqref{146} and the boundary condition \eqref{134} imply
that there exists $\kappa$ independent of $\varepsilon$, such that
\begin{equation}\label{149}
f_{\varepsilon}(x)\geq\frac{\vartheta}{4\varepsilon}
\text{ }\text{ }\text{ for }\text{ }\text{ }
\mathrm{dist}(x,\partial\widetilde{M})<\kappa.
\end{equation}
As in \cite{SchoenYau} a subsequence of the $\varepsilon$-Jang
surfaces converges to a properly embedded submanifold
of $\widetilde{M}\times\mathbb{R}$. This convergence determines
three types of domains:
\begin{align}\label{150}
\begin{split}
\widetilde{M}_{+} =&\, \{x\in\widetilde{M}
\mid f_{\varepsilon_{i}}(x)\rightarrow+\infty\text{ }
\text{ locally uniformly as }\text{ }i\rightarrow\infty\},\\
\widetilde{M}_{-} =&\, \{x\in\widetilde{M}\mid f_{\varepsilon_{i}}(x)
\rightarrow-\infty\text{ }
\text{ locally uniformly as }\text{ }i\rightarrow\infty\},\\
\widetilde{M}_{0} =&\, \{x\in\widetilde{M}\mid
\limsup_{i\rightarrow\infty} |f_{\varepsilon_{i}}(x)|<\infty \}.
\end{split}
\end{align}
By \eqref{149}, $\widetilde{M}_{+}\neq \emptyset$ and $\widetilde{M}_{+}$
contains a neighborhood of $\partial\widetilde{M}$. Since
$\partial\widetilde{M}_{+}-\partial\widetilde{M}$ consists of apparent
horizons and the region $\widetilde{M}-M$ is foliated by surfaces
with $\theta_{+}<0$, we must have that $\widetilde{M}-M
\subset\widetilde{M}_{+}$. Thus $\partial\widetilde{M}_{+}$ is
an apparent horizon
in $M$. As $\partial M$ is the outermost apparent horizon in $M$,
we conclude that the closure of $\widetilde{M}_{+}$ is $\widetilde{M}-M$.
A standard barrier argument \cite{SchoenYau} at spatial infinity shows
that the $\varepsilon$-Jang surfaces are uniformly bounded there,
so that $\widetilde{M}_{0}$ contains a neighborhood of spatial
infinity. It follows that $\widetilde{M}_{0}=M$, and the limiting
Jang surface $\Sigma$ blows-up in the form of a cylinder over $\partial M$.
This line of argument was used by Metzger \cite{Metzger}
for the classical Jang equation.

This result holds for all warping factors $\phi$ which are strictly
positive up to and including the boundary. We would now like to examine
what happens when $\phi$ is allowed to vanish on the boundary.
From now on, let
$$\tau(x)=dist(x,\partial M),$$
the distance to the boundary, and denote by $S_{\tau}$
the level sets of the geodesic flow $\partial_{\tau}=n$
emanating from $\partial M$, where $n$ is the unit outer normal
(pointing towards spatial infinity) of the surfaces $S_{\tau}$.
We assume that
$$\phi=\tau^{b}\widetilde{\phi}\text{ }\text{ }\text{ near }\text{ }\text{ }\partial M,$$
where $\widetilde{\phi}$ is some
positive function, and then set
$$\phi_{\delta}=(\tau+\delta)^{b}\widetilde{\phi}
\text{ }\text{ }\text{ near }\text{ }\text{ }\partial M,$$
where $\delta>0$ is a constant. The function $\phi_{\delta}$ is then naturally extended to the rest of $M$.
When $\phi$ is replaced by $\phi_\delta$, we write the corresponding generalized Jang equation as
\begin{equation}\label{eq-deltaModifiedJang}
H_\delta(f)-K_\delta(f)=0.
\end{equation}
For each $\delta>0$ a blow-up solution $f_{\delta}$ of
\eqref{eq-deltaModifiedJang} exists. We aim to
show that for some subsequence
$\delta_{i}\rightarrow 0$, the surfaces $\Sigma_{\delta_{i}}=
\{t=f_{\delta_{i}}(x)\}$ converge smoothly away
from the boundary to a blow-up solution
$\Sigma=\{t=f(x)\}$ whose asymptotics at the boundary depend on the rates
at which $\phi$ and $\theta_{+}$ vanish.

We now establish the existence of blow-up solutions with the desired upper bound in the first case of Theorem \ref{thm4}.
The lower bound will be established in the next section.

\begin{prop}\label{prop1}
Suppose that $(M,g,k)$ is a smooth, asymptotically flat initial data set,
with outermost apparent horizon boundary $\partial M$ consisting of a
single future apparent horizon component.
Suppose further that $\phi$ is a smooth function,
strictly positive away from
$\partial M$ and satisfying
\eqref{10} and \eqref{14}, and that $\theta_+$
satisfies \eqref{12}.
Then for $b\ge -\frac{l-1}{2}$,
there exists a smooth solution $f$ of the generalized Jang
equation \eqref{7}, satisfying \eqref{11}, and
in a neighborhood of $\partial M$
\begin{align}\label{154}
\begin{split}
f &\leq \alpha\tau^{-b-\frac{l-1}{2}}+\beta\text{ }\text{ }\textit{ if }\text{ }\text{ }
b>-\frac{l-1}{2},\\
f &\leq -\alpha\log\tau+\beta\text{ }\text{ }\textit{ if }\text{ }\text{ }b=-\frac{l-1}{2},
\end{split}
\end{align}
for some positive constants $\alpha$ and $\beta$.
\end{prop}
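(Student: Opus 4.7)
The plan is to approximate the degenerate problem by the non-degenerate family \eqref{eq-deltaModifiedJang} with warping factors $\phi_\delta = (\tau+\delta)^b\widetilde{\phi}$ that are strictly positive up to $\partial M$, use the smooth blow-up solutions $f_\delta$ produced at the start of Section \ref{sec5}, derive a super solution bound $f_\delta \leq \Psi_\delta$ uniformly in $\delta$ away from a thin boundary layer, and pass to the limit $\delta \to 0$ using the interior a priori estimates of Sections \ref{sec2}--\ref{sec4}.

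For the super solution I would take
\[
\Psi_\delta(\tau) = \alpha(\tau+\delta)^{-\gamma} + \beta, \qquad \gamma = b + \tfrac{l-1}{2},
\]
with the logarithmic replacement $-\alpha\log(\tau+\delta)+\beta$ in the critical case $\gamma = 0$. Evaluating $H_\delta(\Psi_\delta) - K_\delta(\Psi_\delta)$ by means of formula \eqref{139} of Section \ref{sec4}, the exponent $\gamma$ is chosen precisely so that $\phi_\delta^2\Psi_\delta'^2 \sim (\tau+\delta)^{-(l+1)}$, whence $\phi_\delta\Psi_\delta'/\sqrt{1+\phi_\delta^2\Psi_\delta'^2} \to -1$ as $\tau+\delta\to 0$. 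On the region $\{\tau \geq C\delta\}$ with $C$ large, the second summand in \eqref{139} then behaves like $-\theta_+(S_\tau) \leq -c^{-1}\tau^l$ using the lower bound in \eqref{12}, while the first, quasilinear summand has size $\sim c_1\alpha^{-2}(\tau+\delta)^l\lesssim c_1'\alpha^{-2}\tau^l$, and the remaining $k$-contributions are of order $(\tau+\delta)^{l+1}$ and hence negligible. Choosing $\alpha$ sufficiently large, independent of $\delta$, yields $H_\delta(\Psi_\delta) - K_\delta(\Psi_\delta) \leq 0$ uniformly on $\{C\delta \leq \tau \leq \tau_0\}$.

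The comparison $\Psi_\delta \geq f_\delta$ on $\Omega_\delta := \{C\delta \leq \tau \leq \tau_0\}$ then follows from the maximum principle once boundary values are arranged. At the outer boundary $\tau=\tau_0$, the parametric $C^{2,\alpha}$ estimates of Theorem \ref{thm-ParametricEstimates} together with the Harnack inequality of Theorem \ref{thm-HarnackInequality} produce a uniform-in-$\delta$ bound on $f_\delta$, so enlarging $\beta$ secures $\Psi_\delta(\tau_0) \geq f_\delta(\tau_0)$. At the inner boundary $\tau = C\delta$ one has $\Psi_\delta(C\delta) \asymp \alpha\delta^{-\gamma}$; this inner inequality is handled either by invoking the classical Jang-type logarithmic blow-up rate of $f_\delta$ near $\partial M$ (which applies because $\phi_\delta$ is bounded above and below there for fixed $\delta$), or, more cleanly, by working with the double regularization $f_{\varepsilon,\delta}$ of Section \ref{sec4} and choosing $\alpha,\beta$ large enough that $\Psi_\delta$ dominates the Dirichlet value $\vartheta/(2\varepsilon)$ before letting $\varepsilon \to 0$. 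Finally, to pass $\delta\to 0$, the a priori estimates of Sections \ref{sec2}--\ref{sec4} are uniform on compact subsets of $M^{\circ}$, so some subsequence $f_{\delta_i}$ converges smoothly on compact subsets of $M^{\circ}$ to a smooth solution $f$ of \eqref{7}. For each fixed $\tau_1>0$, the bound $f_{\delta_i}\leq\Psi_{\delta_i}$ holds on $\{\tau_1\leq\tau\leq\tau_0\}$ once $\delta_i<\tau_1/C$; letting $i\to\infty$ and then $\tau_1\to 0$ produces $f\leq\alpha\tau^{-\gamma}+\beta$ near $\partial M$. The decay \eqref{11} at spatial infinity follows from uniform asymptotic barriers exactly as in Section \ref{sec4}.

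The principal obstacle is the super solution computation: only the precise calibration $\gamma=b+(l-1)/2$, combined with the assumed lower bound $\theta_+(S_\tau)\geq c^{-1}\tau^l$ from \eqref{12}, produces the sign-determining cancellation between the quasilinear piece and the null-expansion driver $-\theta_+$. This is exactly why the statement restricts to $-\tfrac{l-1}{2}\leq b<\tfrac{l+1}{2}$: outside this range either the quasilinear first term or the $k$-correction would dominate, and the super solution would fail. A secondary, but necessary, technical point is ensuring uniform control of $f_\delta$ near the inner boundary $\tau=C\delta$, which is what forces us either to invoke the Jang-type blow-up rate or to carry out the comparison at the level of the double $(\varepsilon,\delta)$-regularization.
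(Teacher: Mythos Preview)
Your strategy is the same as the paper's: regularize $\phi$ to $\phi_\delta=(\tau+\delta)^b\widetilde\phi$, build a radial super solution calibrated so that the driver $-\theta_+(S_\tau)\le -c^{-1}\tau^l$ dominates the quasilinear remainder, compare with the regularized solutions, and pass $\delta\to 0$ using the interior estimates of Sections~\ref{sec2}--\ref{sec3}. The exponent calibration $\gamma=b+\tfrac{l-1}{2}$ and the reason for the restriction $b<\tfrac{l+1}{2}$ are exactly as in the paper.

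The one substantive difference is the choice of barrier, and this is where your argument is weaker. You take $\Psi_\delta(\tau)=\alpha(\tau+\delta)^{-\gamma}+\beta$, which is \emph{finite} at $\partial M$; this forces you to restrict the comparison to $\{C\delta\le\tau\le\tau_0\}$ and to manufacture an inner boundary inequality. Neither of your proposed fixes is clean: the ``Jang-type logarithmic blow-up rate'' of $f_\delta$ is not established with $\delta$-independent constants (indeed no precise rate is available at that stage), and the alternative of making $\Psi_\delta$ dominate the Dirichlet value $\vartheta/(2\varepsilon)$ cannot be done with $\alpha,\beta$ independent of $\varepsilon$, since $\Psi_\delta(C\delta)\asymp\alpha\delta^{-\gamma}$ is fixed while $f_{\delta,\varepsilon}$ may be as large as $c\varepsilon^{-1}$ there. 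The paper sidesteps all of this by taking the $\delta$-\emph{independent} barrier $\psi(\tau)=\alpha\tau^{-\gamma}+\beta$ (respectively $-\alpha\log\tau+\beta$). Because $\psi\to+\infty$ at $\partial M$ while each $f_{\delta,\varepsilon}$ extends smoothly across $\partial M$ (hence is finite there), the inner comparison is automatic, and the maximum principle applies on the full strip $(0,\tau_0]$. The only thing to check is that $\psi$ is a super solution for the $\delta$-equation uniformly in $\delta$; this follows from $(\tau+\delta)^{2b}\ge\tau^{2b}$, which gives $\phi_\delta^2\psi'^2\ge c\alpha^2\tau^{-(l+1)}$ and hence the quasilinear piece is $\le C\alpha^{-2}\tau^{l}$, matching $-\theta_+\le -c^{-1}\tau^l$ for $\alpha$ large. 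So your approach is salvageable, but the paper's barrier buys a one-line comparison in place of your inner-boundary discussion.
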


\begin{proof} Let $f_{\delta,\varepsilon}$ denote solutions of the
$\varepsilon$-regularized generalized Jang equation with warping
factor $\phi_{\delta}$, as constructed in the previous section,
$$H_\delta(f_{\delta,\varepsilon})-K_\delta(f_{\delta,\varepsilon})
=\varepsilon f_{\delta,\varepsilon}.$$
We now proceed to construct an appropriate upper barrier
function $\psi$ for $f_{\delta,\varepsilon}$ which is independent of $\delta$
and $\varepsilon$. For this we will assume that \eqref{12} holds
for $\tau\in[0,\tau_{0}]$, with $\tau_{0}$ sufficiently small.
The upper barrier $\psi$ will be a function of $\tau$ alone,
namely $\psi=\psi(\tau)$. Recall that
\begin{align*}
H_\delta(\psi)-K_\delta(\psi)-\varepsilon\psi
&=\frac{\phi_{\delta}\psi''
+2\phi_{\delta}'\psi'}{(1+\phi_{\delta}^{2}\psi'^{2})^{3/2}}
+\frac{\phi_{\delta}\psi'\theta_{+}(S_{\tau})}
{\sqrt{1+\phi_{\delta}^{2}\psi'^{2}}}
-\frac{k_{nn}}{1+\phi_{\delta}^{2}\psi'^{2}}\\
&\qquad
-\left(1+\frac{\phi_{\delta}\psi'}{\sqrt{1+\phi_{\delta}^{2}\psi'^{2}}}\right)
Tr_{S_{\tau}}k-\varepsilon\psi.
\end{align*}
In the following, $\psi$ will be a decreasing function, that is, $\psi'<0$.
Hence, we have
\begin{align}\label{151}
\begin{split}
H_\delta(\psi)-K_\delta(\psi)-\varepsilon\psi
&=-\theta_{+}(S_{\tau})+\frac{\phi_{\delta}\psi''
+2\phi_{\delta}'\psi'}{(1+\phi_{\delta}^{2}\psi'^{2})^{3/2}}
-\frac{k_{nn}}{1+\phi_{\delta}^{2}\psi'^{2}}\\
&\qquad+\frac{H_{S_{\tau}}}{\sqrt{1+\phi_{\delta}^{2}\psi'^{2}}
({\sqrt{1+\phi_{\delta}^{2}\psi'^{2}}}-\phi_\delta \psi')}
-\varepsilon\psi.
\end{split}
\end{align}
Consider $$\psi(\tau)=\alpha \tau^{-a}+\beta,$$ for some constants
$a,\alpha,\beta\geq 0$.
In order for $|\phi\psi'|\rightarrow\infty$ as $\tau\rightarrow 0$,
we require
that $a>b-1$. A straightforward calculation shows that
$$\frac{|\phi_{\delta}\psi''
+2\phi_{\delta}'\psi'|}{(1+\phi_{\delta}^{2}\psi'^{2})^{3/2}}
+\frac{1}{1+\phi_{\delta}^{2}\psi'^{2}}
\le \frac{C}{a^2\alpha^2}(a+b+1)\tau^{2a-2b+1},$$
for $\tau$ sufficiently small.
By the assumption $\theta_{+}\geq c^{-1}\tau^{l}$, we obtain
\begin{equation*}\label{152}
H_\delta(\psi)-K_\delta(\psi)-\varepsilon\psi \leq
-c^{-1}\tau^{l}+ \frac{C}{a^2\alpha^2}(a+b+1)\tau^{2a-2b+1}.
\end{equation*}
Now set $a=b+\frac{l-1}{2}$, and notice that this automatically satisfies
the previous condition $a>b-1$. If $a>0$ and $\alpha$ is
chosen sufficiently large, then $\psi$ is a super solution,
$$H_\delta(\psi)-K_\delta(\psi)-\varepsilon\psi \leq 0.$$
If $a=0$, then setting
$$\psi(\tau)=-\alpha\log\tau+\beta$$
yields a similar result.

In order to obtain the estimate, recall that $|f_{\delta,\varepsilon}|$
will be uniformly bounded away
from the boundary, say at the surface corresponding to $\tau=\tau_{0}>0$.
This follows from the Harnack inequality and parametric estimates, as in \cite{SchoenYau}.
We may then choose $\beta$ sufficiently large independent of $\delta$
and $\varepsilon$ such that $\psi(\tau_{0})
\geq f_{\delta,\varepsilon}|_{S_{\tau_{0}}}$. A standard comparison argument
then shows that
\begin{equation}\label{153}
f_{\delta,\varepsilon}|_{S_{\tau}}\leq\psi(\tau)
\text{ }\text{ }\text{ for }\text{ }\text{ }\tau\in(0,\tau_{0}].
\end{equation}
As we have shown above, letting $\varepsilon\rightarrow 0$ yields a
blow-up solution $\Sigma_{\delta}$ to the generalized Jang
equation for each positive $\delta$. The solution $f_{\delta}$ must
of course also satisfy the asymptotics \eqref{153}.

Let us now extract a subsequence $\delta_{i}\rightarrow 0$ such that
the surfaces $\Sigma_{\delta_{i}}$ converge smoothly away from the
boundary to a solution
$\Sigma=\{t=f(x)\}$ of the generalized Jang equation. This may be
proved in the usual way \cite{SchoenYau}, making use of the parametric
estimates of Section \ref{sec2}
and the Harnack inequality of Section \ref{sec3}. Moreover
the asymptotics \eqref{153} still hold for $f$.
\end{proof}

Since the warping factor $\phi$ vanishes at the boundary,
we cannot say, without further analysis,
precisely how the solution behaves at the boundary. For instance,
although the solutions $\Sigma_{\delta_{i}}$ blow-up in the form of
a cylinder over $\partial M$,
it may be the case that as $\delta_{i}\rightarrow 0$ the blow-up
solutions become arbitrarily close to $\partial M\times\mathbb{R}$
and eventually (in the limit)
coincide or `stick' to the cylinder at certain points. In order to
prevent this, certain inequalities should hold between the vanishing
rates of $\phi$ and $\theta_{+}$;
this issue is the primary focus of the next section.

It turns out that the blow-up rates of \eqref{154} are not
the only asymptotics for the generalized
Jang equations. The next result provides other possibilities for upper barriers.

\begin{prop}\label{prop2}
Suppose that $(M,g,k)$ is a smooth, asymptotically flat initial
data set, with outermost apparent horizon boundary $\partial M$
consisting of a single future apparent horizon component.
Suppose further that $\phi$ is a smooth function,
strictly positive away from
$\partial M$ and satisfying
\eqref{10} and \eqref{14}, and that $\theta_+$
satisfies \eqref{12}.
Then for $b\ge 1/2$,
there exists a smooth solution $f$ of the generalized Jang
equation \eqref{7}, satisfying \eqref{11}, and
in a neighborhood of $\partial M$
\begin{align}\label{155}
\begin{split}
f &\leq \alpha\tau^{1-2b}+\beta\text{ }\text{ }\textit{ if }\text{ }\text{ }b>\frac{1}{2},\\
f &\leq -\alpha\log\tau+\beta\text{ }\text{ }\textit{ if }\text{ }\text{ }b=\frac{1}{2},
\end{split}
\end{align}
for some positive constants $\alpha$ and $\beta$.
\end{prop}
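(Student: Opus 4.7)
The plan is to mimic the argument of Proposition \ref{prop1} step-for-step, replacing only the choice of super solution. Start from the $\varepsilon$-regularized blow-up solutions $f_{\delta,\varepsilon}$ of the $\phi_\delta$-modified Jang equation on the extended manifold $\widetilde M$, constructed in Section \ref{sec4}. Produce an upper barrier $\psi=\psi(\tau)$ that depends on neither $\delta$ nor $\varepsilon$, invoke the comparison principle on the strip $\tau\in(0,\tau_0]$ after matching at $\tau=\tau_0$ via the $\beta$-constant (using that $f_{\delta,\varepsilon}$ is uniformly bounded at $\tau=\tau_0$ by the parametric estimates of Theorem \ref{thm-ParametricEstimates} and the Harnack inequality of Theorem \ref{thm-HarnackInequality}), and then extract a smooth limit first as $\varepsilon\to 0$ (giving a blow-up solution $f_\delta$ of the $\phi_\delta$-Jang equation satisfying the barrier estimate) and then as $\delta_i\to 0$ along a subsequence, obtaining the solution $f$ to \eqref{7} with the desired bound.

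The new ingredient is the ansatz
\[
\psi(\tau)=\alpha\tau^{1-2b}+\beta\quad\text{for }b>\tfrac12,\qquad\psi(\tau)=-\alpha\log\tau+\beta\quad\text{for }b=\tfrac12,
\]
with $\alpha,\beta>0$ to be chosen. The algebraic point driving this choice is the identity $\phi^2\psi'\equiv\text{const}$ when $\widetilde\phi$ is constant and $\delta=0$: differentiation gives $\phi\psi''+2\phi'\psi'\equiv 0$. Consequently, the principal term $\frac{\phi_\delta\psi''+2\phi'_\delta\psi'}{(1+\phi_\delta^2\psi'^2)^{3/2}}$ in the expansion \eqref{151} vanishes at leading order; what remains comes only from the $\widetilde\phi$-variation and from the $\delta$-regularization, both of which can be written in the factored form $\phi\psi''+2\phi'\psi' = 2b(2b-1)\alpha\widetilde\phi_0\,\delta\,(\tau+\delta)^{b-1}\tau^{-2b-1}+O(\widetilde\phi')$, producing an error of order $O(\tau^{2b-1}/\alpha^2)$ after division by $D^{3/2}$. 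The $k_{nn}/D$ piece and the $H_{S_\tau}$ piece of \eqref{151} are likewise of order $O(\tau^{2b}/\alpha^2)$, since $\phi_\delta^2\psi'^2 \gtrsim \tau^{-2b}$ along this family.

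Combining these bounds and invoking the lower bound $\theta_+(S_\tau)\ge c^{-1}\tau^l$ from \eqref{12}, the super solution inequality becomes
\[
H_\delta(\psi)-K_\delta(\psi)-\varepsilon\psi\le -c^{-1}\tau^l+\frac{C}{\alpha^2}\tau^{2b-1}+O(\tau^{2b}/\alpha^2),
\]
and since $b<(l+1)/2$ is automatic in the intended application (Theorem \ref{thm4}(2)) while $b\ge 1/2$ makes the ansatz blow up, one chooses $\tau_0$ small and $\alpha$ sufficiently large, independent of $\delta$ and $\varepsilon$, so that the right-hand side is nonpositive for $\tau\in(0,\tau_0]$. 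The logarithmic case $b=1/2$ is handled identically, noting that with $\psi=-\alpha\log\tau+\beta$ we still have $\phi\psi''+2\phi'\psi'\equiv 0$ for constant $\widetilde\phi$.

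The main obstacle is the verification that the $N/D^{3/2}$ contribution actually is of lower order than the naive bound $\tau^{2a-2b+1}=\tau^{2b-1}$ used in Proposition \ref{prop1}: one must exploit the precise cancellation available at the critical exponent $a=2b-1$, tracking how the $\widetilde\phi_1\tau+\cdots$ tail of $\widetilde\phi$ and the $\delta$-smoothing of $\phi$ contribute, and showing that the $H_{S_\tau}$-term in \eqref{151} does not produce a nonvanishing zeroth-order residual (it does not, thanks to the expansion $\phi\psi'/\sqrt{1+\phi^2\psi'^2}=-1+\frac{1}{2\phi^2\psi'^2}+O$ and the combination with the $\theta_+$ term performed in the derivation of \eqref{151}). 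Once the super solution is in hand, the comparison argument and limiting procedure are identical to those in Proposition \ref{prop1}, yielding the estimates \eqref{155}.
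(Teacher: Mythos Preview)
Your error analysis is correct but your conclusion is backwards, and this is a genuine gap. With the $\delta$-independent ansatz $\psi=\alpha\tau^{1-2b}+\beta$ you correctly find that the $\delta$-regularization spoils the cancellation $\phi\psi''+2\phi'\psi'=0$ and produces a \emph{positive} residual
\[
\frac{\phi_\delta\psi''+2\phi'_\delta\psi'}{(1+\phi_\delta^2\psi'^2)^{3/2}}
\;\sim\;\frac{C}{\alpha^2}\,\frac{\delta\,\tau^{4b-1}}{(\tau+\delta)^{2b+1}}
\;\le\;\frac{C}{\alpha^2}\,\tau^{2b-1},
\]
with the bound attained when $\tau\sim\delta$. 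The super-solution inequality then requires $\frac{C}{\alpha^2}\tau^{2b-1}\le c^{-1}\tau^l$, i.e.\ $\tau^{\,2b-1-l}\le C'\alpha^2$. When $b<\frac{l+1}{2}$ (precisely the range you cite as ``automatic in the intended application''), the exponent $2b-1-l$ is \emph{negative}, so $\tau^{2b-1-l}\to\infty$ as $\tau\to 0$ and no choice of $\alpha$ works uniformly on $(0,\tau_0]$. Shifting the ansatz to $\psi_\delta=\alpha(\tau+\delta)^{1-2b}+\beta$ restores the cancellation but leaves the lower-order terms of size $C\alpha^{-2}(\tau+\delta)^{2b}$, which at $\tau=0$ is strictly positive while $\theta_+(S_0)=0$; so that variant also fails.

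The paper repairs this by abandoning the exact power law and instead choosing $\psi_\delta$ to solve the ODE $\psi_\delta''+\frac{2b}{\tau+\delta}\psi_\delta'=\lambda\psi_\delta'$, i.e.\ $\psi_\delta(\tau)=\mu_1\int_{\tau+\delta}^{1}s^{-2b}e^{\lambda s}\,ds+\mu_2$. The point is not cancellation but \emph{sign}: this forces $\phi_\delta\psi_\delta''+2\phi'_\delta\psi_\delta'\le\frac{\lambda}{2}\phi_\delta\psi_\delta'<0$ for $\lambda$ large, so the second-derivative term in \eqref{151} is itself negative and dominates the $k_{nn}$ and $H_{S_\tau}$ remainders (both of order $\phi_\delta^{-2}|\psi_\delta'|^{-2}$), independently of any comparison with $\theta_+$. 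The barrier is then necessarily $\delta$-dependent, its value $\psi_\delta(0)\sim\delta^{1-2b}$ is finite, and one must couple $\delta$ to $\varepsilon$ (taking $\delta=\varepsilon^{1/(2b-1)}$ for $b>\tfrac12$, $\delta=e^{-1/\varepsilon}$ for $b=\tfrac12$) so that $\psi_\delta(0)\ge f_{\delta,\varepsilon}|_{\partial M}$; the limit is then taken in a single step $\varepsilon_i\to 0$ rather than the two-step $\varepsilon\to 0$ then $\delta\to 0$ procedure you outline.
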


\begin{proof}
Recall the calculation in \eqref{151}. Our strategy will be to choose
the upper barrier function $\psi_{\delta}$, so that the second derivative
of $\psi_{\delta}$ will dominate all other terms in \eqref{151},
except $-\theta_+(S_\tau)$. To this end, consider
$$\psi_{\delta}''+\frac{2b}{\tau+\delta}\psi_{\delta}'=\lambda
\psi_{\delta}'$$
for some $\lambda$ to be determined. It is an easy
exercise to show that its solution is given by
\begin{equation}\label{156}
\psi_\delta(\tau)=\mu_{1}\int_{\tau+\delta}^1
\frac{e^{\lambda s}}{s^{2b}}\,ds+\mu_{2},
\end{equation}
for some positive $\mu_{1}$ and $\mu_{2}$ to be determined.
It is obvious that $\psi_\delta$ is monotonically decreasing.
Therefore
\begin{align*}
\psi_{\delta}''+2(\log\phi_{\delta})'\psi_{\delta}'
&=\psi_{\delta}''+\frac{2b}{\tau+\delta}\psi_{\delta}'
+2(\log\widetilde{\phi})'\psi_{\delta}'
\leq\psi_{\delta}''+\frac{2b}{\tau+\delta}\psi_{\delta}'-c\psi_{\delta}'\\
&=(\lambda-c)\psi_{\delta}'\leq\frac{\lambda}{2}\psi'_{\delta}<0
\end{align*}
for large $\lambda$. Hence, \eqref{151} becomes
\begin{align*}
H_{\delta}(\psi_{\delta})-K_{\delta}(\psi_{\delta})-\varepsilon\psi_{\delta}
&\leq -\theta_{+}
+\frac{\lambda\phi_{\delta}\psi_{\delta}'}
{2(1+\phi_{\delta}^{2}\psi_{\delta}'^{2})^{3/2}}
+c\phi_{\delta}^{-2}|\psi_{\delta}'|^{-2}\\
&\leq -c^{-1}\tau^{l}-\left(\frac{\lambda}{2}-c\right)
\phi_{\delta}^{-2}|\psi_{\delta}'|^{-2}< 0,
\end{align*}
if again $\lambda$ is large enough. It follows that
$\psi_{\delta}$ is a super solution.

Recall that $f_{\delta,\varepsilon}|_{\partial M}\leq c\varepsilon^{-1}$ by \eqref{135},
and note that
\begin{align*}
&\psi_{\delta}(0)\geq\alpha\delta^{1-2b}+\beta
\text{ }\text{ }\text{ if }\text{ }\text{ }b>\frac{1}{2},\\
&\psi_{\delta}(0)\geq-\alpha\log\delta+\beta\text{ }\text{ }
\text{ if }\text{ }\text{ }b=\frac{1}{2}.
\end{align*}
This shows that $\psi_{\delta}|_{\partial M}\geq
f_{\delta,\varepsilon}|_{\partial M}$ for sufficiently large $\alpha$,
if $\delta=\varepsilon^{(2b-1)^{-1}}$ when $b>1/2$,
and $\delta=e^{-\varepsilon^{-1}}$ when $b=1/2$. The maximum
principle then implies (if $\beta$ is chosen large enough) that
$f_{\delta,\varepsilon}\leq \psi_{\delta}$ for $\tau\in[0,\tau_{0}]$.
Let us now extract a subsequence $\varepsilon_{i}\rightarrow 0$
such that the surfaces $\Sigma_{\delta_{i},\varepsilon_{i}}$
converge smoothly away from the boundary to a solution
$\Sigma=\{t=f(x)\}$ of the generalized Jang equation.
This may be proved in the usual way as at the end of Proposition \ref{prop1},
even though here $\delta_{i}=\varepsilon_{i}^{(2b-1)^{-1}}$ when $b>1/2$, and
$\delta_{i}=e^{-\varepsilon_{i}^{-1}}$ when $b=1/2$.
It follows that the solution $f$ must satisfy the asymptotics \eqref{155}.
\end{proof}

Again, because the warping factor $\phi$ vanishes at the boundary,
we cannot say, without further analysis,
precisely how the solution behaves at the boundary. It might coincide or
`stick' to the cylinder over the boundary at certain points.
Nevertheless, the barrier construction yields
an upper bound for the asymptotics.

We may now compare the different asymptotics \eqref{154} and \eqref{155}.
Observe that $2b-1< b+\frac{l-1}{2}$ only when $b<\frac{l+1}{2}$, and
therefore the asymptotics in \eqref{155} only
improve the ones in \eqref{154} when $\frac{1}{2} \leq b < \frac{l+1}{2}$.

\section{Further Analysis of the Blow-Up Solutions}\label{sec6}

The purpose of this section is to obtain subsolutions for the
generalized Jang equation and to prove Theorem \ref{thm4}.
It will be shown that subsolutions exist with
the same asymptotics as described in Propositions \ref{prop1}
and \ref{prop2}. An appropriate comparison argument will then
be employed to
prove Theorem \ref{thm4}. As in the previous two sections, we
will assume for simplicity that the boundary of $M$ is an outermost
future apparent horizon with one component. All arguments to follow may be
extended to the general case in which the boundary is an outermost
apparent horizon, with several future and past horizon components. Moreover the
additional arguments needed for such an extension will be recorded at the end of
this section in the proof of Theorem \ref{thm4}.

Let $\psi(\tau)$ be a function of $\tau$ alone, which satisfies
$\psi'<0$ and
\begin{equation}\label{157}
|\phi\psi'|\rightarrow\infty\text{ }\text{ }\text{ as }\text{ }\text{ }\tau\rightarrow 0.
\end{equation}
Then according to \eqref{151} there exist bounded functions $c_{1}$ and $c_{2}$,
with $c_{1}\geq\frac{1}{2}$ for small $\tau$, such that
\begin{equation}\label{158}
H(\psi)-K(\psi)=-\theta_{+}(S_{\tau})
-\frac{c_{1}}{\tau^{2b}\psi'^{2}}\left(\frac{\psi''}{\psi'}+\frac{2b}{\tau}\right)
+\frac{c_{2}}{\tau^{2b}\psi'^{2}}.
\end{equation}

\begin{lemma}\label{lemma1}
$\operatorname{(1)}$
Suppose that $-\frac{l-1}{2}\leq b <\frac{l+1}{2}$, and that \eqref{12} and \eqref{14}
hold for $\tau\in[0,\tau_{0}]$. Then for sufficiently small $\tau_{0}$
there exist sub and supersolutions $\underline{\psi}$ and
$\overline{\psi}$ of the generalized Jang equation, satisfying
\eqref{157}, and such that
\begin{align}\label{159}\begin{split}
\alpha^{-1}\tau^{-b-\frac{l-1}{2}}+\beta^{-1} &\leq \underline{\psi},
\overline{\psi} \leq \alpha\tau^{-b-\frac{l-1}{2}}+\beta\text{ }\text{ }\text{ if }\text{ }\text{ }
-\frac{l-1}{2}<b<\frac{l+1}{2},\\
-\alpha^{-1}\log\tau+\beta^{-1} &\leq \underline{\psi},\overline{\psi} \leq -\alpha\log\tau+\beta\text{ }\text{ }\text{ if }
\text{ }\text{ }b=-\frac{l-1}{2},
\end{split}\end{align}
for some positive constants $\alpha$ and $\beta$. Moreover
\begin{equation*}\label{160}
H(\underline{\psi})-K(\underline{\psi})\geq\lambda\tau^{l},\text{ }\text{ }\text{ }\text{ }
H(\overline{\psi})-K(\overline{\psi})\leq-\frac{c^{-1}}{2}\tau^{l},
\end{equation*}
where the constant $\lambda>0$ may be chosen arbitrarily large and
$c^{-1}$ is as in \eqref{12}.

$\operatorname{(2)}$
Suppose that $\frac{1}{2}\leq b <\frac{l+1}{2}$, and that \eqref{14} and \eqref{15}
hold for $\tau\in[0,\tau_{0}]$. Then for sufficiently small $\tau_{0}$
there exist sub and supersolutions $\underline{\chi}$ and
$\overline{\chi}$ of the generalized Jang equation, satisfying
\eqref{157}, and such that
\begin{align*}
\alpha^{-1}\tau^{1-2b}+\beta^{-1} &\leq \underline{\chi},\overline{\chi} \leq \alpha\tau^{1-2b}+\beta\text{ }\text{ }\text{ if }\text{ }\text{ }
\frac{1}{2}<b<\frac{l+1}{2},\\
-\alpha^{-1}\log\tau+\beta^{-1} &\leq \underline{\chi},\overline{\chi} \leq -\alpha\log\tau+\beta\text{ }\text{ }\text{ if }\text{ }\text{ }b=\frac{1}{2},
\end{align*}
for some positive constants $\alpha$ and $\beta$. Moreover
\begin{equation*}
H(\underline{\chi})-K(\underline{\chi})\geq\lambda\tau^{\widetilde{l}},
\text{ }\text{ }\text{ }\text{ } H(\overline{\chi})-K(\overline{\chi})\leq-\lambda\tau^{\widetilde{l}},
\end{equation*}
where the constant $\lambda>0$ may be chosen arbitrarily large and $2b-1<\widetilde{l}\leq\min(l,2b)$.
\end{lemma}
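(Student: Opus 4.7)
The plan is to exhibit explicit ansatzes depending only on $\tau$, substitute into the simplified formula \eqref{158} for $H(\psi)-K(\psi)$, and tune the parameters to achieve the stated inequalities.

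For Part (1), I take the power ansatz $\psi(\tau)=\alpha\tau^{-a}+\beta$ with $a=b+(l-1)/2$, replaced by $\psi(\tau)=-\alpha\log\tau+\beta$ in the borderline case $a=0$. Direct differentiation gives $\psi''/\psi'+2b/\tau=(b-(l+1)/2)/\tau$ and $\tau^{2b}\psi'^{2}=a^{2}\alpha^{2}\tau^{-l-1}$, so \eqref{158} reduces to
\begin{equation*}
H(\psi)-K(\psi)=-\theta_{+}(S_\tau)+\frac{c_{1}((l+1)/2-b)}{a^{2}\alpha^{2}}\tau^{l}+O(\tau^{l+1}),
\end{equation*}
with a strictly positive correction coefficient since $b<(l+1)/2$. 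Combined with $c^{-1}\tau^{l}\le\theta_{+}\le c\tau^{l}$ from \eqref{12}, taking $\alpha$ large forces the correction below $c^{-1}\tau^{l}/2$, yielding $H(\overline{\psi})-K(\overline{\psi})\le -c^{-1}\tau^{l}/2$, while taking $\alpha$ small makes the correction exceed $(\lambda+c)\tau^{l}$, yielding $H(\underline{\psi})-K(\underline{\psi})\ge\lambda\tau^{l}$ for any prescribed $\lambda$. Both ansatzes then fit into a common band by choosing the final constants in \eqref{159} sufficiently large. The logarithmic case proceeds analogously with $\psi''/\psi'+2b/\tau=-l/\tau$.

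For Part (2), I adapt the construction of Proposition \ref{prop2}: set
\begin{equation*}
\chi_{\pm}(\tau)=\mu\int_{\tau}^{1}s^{-2b}e^{\pm\Lambda s}\,ds+\nu,\qquad \overline{\chi}:=\chi_{+},\quad \underline{\chi}:=\chi_{-},
\end{equation*}
and compute $\chi_{\pm}''/\chi_{\pm}'+2b/\tau=\pm\Lambda$ together with $\tau^{2b}(\chi_{\pm}')^{2}=\mu^{2}\tau^{-2b}e^{\pm 2\Lambda\tau}$, so \eqref{158} becomes
\begin{equation*}
H(\chi_{\pm})-K(\chi_{\pm})=-\theta_{+}(S_\tau)\mp\frac{c_{1}\Lambda\tau^{2b}}{\mu^{2}e^{\pm 2\Lambda\tau}}+\frac{c_{2}\tau^{2b}}{\mu^{2}e^{\pm 2\Lambda\tau}}.
\end{equation*}
For $\tau$ in a small neighborhood of zero the exponential factor is bounded above and below, and choosing $\Lambda$ large and $\mu$ small forces the $\mp c_{1}\Lambda\tau^{2b}/\mu^{2}$ term to dominate with the correct sign, negative for $\overline{\chi}$ and positive for $\underline{\chi}$. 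Since $\theta_{+}\ge 0$ near the outermost future apparent horizon, $-\theta_{+}\le 0$ reinforces the supersolution inequality, while $|\theta_{+}|\le c\tau^{l}$ from \eqref{15} controls $-\theta_{+}$ from below for the subsolution inequality. The asymptotic bounds in the band follow from $\int_{\tau}^{1}s^{-2b}\,ds=(\tau^{1-2b}-1)/(2b-1)$ for $b>1/2$ (respectively $-\log\tau$ for $b=1/2$), the exponential contributing only a bounded multiplicative factor near $\tau=0$.

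The main technical obstacle lies in Part (2): one must verify that some $\widetilde{l}$ in the strict range $(2b-1,\min(l,2b)]$ can be chosen so that every lower-order contribution to \eqref{158} is absorbed into the leading $\mp c_{1}\Lambda\tau^{2b}/\mu^{2}$ at that scale. The lower constraint $\widetilde{l}>2b-1$ is precisely what makes the Taylor remainders of order $\tau^{2b+1}$ (arising from $e^{\pm 2\Lambda\tau}$, from the $(\log\widetilde{\phi})'$ correction to $2b/\tau$, and from the bounded $c_{2}$ contribution) be $o(\tau^{\widetilde{l}})$, while the upper constraint $\widetilde{l}\le\min(l,2b)$ keeps the $\tau^{l}$ and $\tau^{2b}$ scales from being dwarfed by $\tau^{\widetilde{l}}$. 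Part (1) avoids this delicacy because the leading $-\theta_{+}(S_\tau)$ term and the explicit correction both scale exactly as $\tau^{l}$, so only a direct coefficient comparison is required.
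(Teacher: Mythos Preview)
Your treatment of Part~(1) is correct and essentially identical to the paper's: the same power ansatz $\psi=\alpha\tau^{-a}+\beta$ with $a=b+(l-1)/2$ is used, the same quantity $\psi''/\psi'+2b/\tau=(2b-a-1)/\tau$ is computed, and the sub/supersolution inequalities follow by taking $\alpha$ small or large respectively.

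Part~(2), however, has a genuine gap. Your exponential ansatz $\chi_\pm'=-\mu\tau^{-2b}e^{\pm\Lambda\tau}$ produces a correction term of order \emph{exactly} $\tau^{2b}$ in \eqref{158}. This is adequate only when $l\ge 2b$: then $\tau^{l}\le\tau^{2b}$ and the $-\theta_+$ contribution, bounded by $c\tau^{l}$ via \eqref{15}, can be absorbed into $\mp c_1\Lambda\mu^{-2}\tau^{2b}$ by choosing $\Lambda/\mu^2$ large; you then achieve the conclusion with $\widetilde{l}=2b=\min(l,2b)$. But the hypothesis $\tfrac12\le b<\tfrac{l+1}{2}$ also permits $2b-1<l<2b$ (e.g.\ $b=1$, $l=3/2$). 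In that regime $\tau^{l}\gg\tau^{2b}$ for small $\tau$, so for the subsolution you get
\[
H(\underline{\chi})-K(\underline{\chi})\ge -c\tau^{l}+\frac{c_1\Lambda}{\mu^2}\tau^{2b}+O(\tau^{2b}),
\]
and no choice of $\Lambda,\mu$ can make the right side $\ge\lambda\tau^{\widetilde{l}}>0$ near $\tau=0$, since the negative $\tau^{l}$ term dominates. Your own discussion of the constraint $\widetilde{l}\le\min(l,2b)$ presupposes that the leading correction can be placed at scale $\tau^{\widetilde{l}}$, but the exponential ansatz pins it at $\tau^{2b}$ only.

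The paper circumvents this by solving the auxiliary ODE
\[
\frac{1}{\tau^{2b}\chi'^{2}}\Bigl(\frac{\chi''}{\chi'}+\frac{2b}{\tau}\Bigr)=\mp\lambda\tau^{\widetilde{l}},
\]
which integrates explicitly to
\[
\chi'=-\Bigl(\tfrac{\pm 2\lambda}{\widetilde{l}+1-2b}\tau^{\widetilde{l}+1+2b}+\Lambda^{2}\tau^{4b}\Bigr)^{-1/2}.
\]
This manufactures a correction of order $\tau^{\widetilde{l}}$ for \emph{any} prescribed $\widetilde{l}\in(2b-1,\min(l,2b)]$, so one may take $\widetilde{l}\le l$ and make $\lambda$ large enough to swallow both the $c_4\tau^{l}$ term from $-\theta_+$ and the $c_5\Lambda^{2}\tau^{2b}$ remainder. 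The lower bound $\widetilde{l}>2b-1$ is needed so that $\Lambda^{2}\tau^{4b}$ dominates inside the square root, keeping $\chi'\sim-\Lambda^{-1}\tau^{-2b}$ and hence the asymptotic band $\alpha^{-1}\tau^{1-2b}\le\chi\le\alpha\tau^{1-2b}$. Your ansatz corresponds to the special case $\widetilde{l}=2b$ of this construction (up to lower order), which is why it succeeds only when $2b\le l$.
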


\begin{proof}
$\operatorname{(1)}$
First consider the case $-\frac{l-1}{2}< b <\frac{l+1}{2}$ and take the
most obvious choice
$$\psi(\tau)=\alpha\tau^{-a}+\beta,$$
where $a$ is to be determined.
A calculation shows that
\begin{equation*}\label{161}
\frac{\psi''}{\psi'}+\frac{2b}{\tau}=\frac{2b-a-1}{\tau},
\end{equation*}
and $|\phi\psi'|=a\alpha\tau^{b-a-1}\rightarrow\infty$ as
$\tau\rightarrow 0$ if $a>b-1$. By applying \eqref{12} to \eqref{158}
we then have
\begin{equation*}\label{162}
H(\psi)-K(\psi)=-c_{3}\tau^{l}
-\frac{(2b-a-1)c_{1}}{a^{2}\alpha^{2}}\tau^{2a-2b+1}
+\frac{c_{2}}{a^{2}\alpha^{2}}\tau^{2a-2b+2},
\end{equation*}
where $c_{3}\geq c^{-1}>0$ is a bounded function.
Setting $a=b+\frac{l-1}{2}$ yields
\begin{equation*}\label{163}
H(\psi)-K(\psi)=-c_{3}\tau^{l}
+\frac{2(l+1-2b)c_{1}}{\alpha^{2}(2b+l-1)^{2}}\tau^{l}
+\frac{4c_{2}}{\alpha^{2}(2b+l-1)^{2}}\tau^{l+1}.
\end{equation*}
Thus if $\tau_{0}$ is sufficiently small, then by defining
$\underline{\psi}$ to have the structure of $\psi$ with
$\alpha$ sufficiently small
\begin{equation*}\label{164}
H(\underline{\psi})-K(\underline{\psi})\geq \alpha^{-1}\tau^{l},
\end{equation*}
and by defining $\overline{\psi}$ to have the structure
of $\psi$ with $\alpha$ sufficiently large
\begin{equation*}\label{165}
H(\overline{\psi})-K(\overline{\psi})\leq-\frac{c^{-1}}{2}\tau^{l}.
\end{equation*}

Now consider the case when $b=-\frac{l-1}{2}$, and take $$\psi(\tau)=-\alpha\log\tau+\beta.$$ Observe that
\begin{equation*}\label{166}
\frac{\psi''}{\psi'}+\frac{2b}{\tau}=-\frac{l}{\tau},
\end{equation*}
and $|\phi\psi'|=\alpha\tau^{-\frac{l+1}{2}}\rightarrow\infty$ as
$\tau\rightarrow 0$. By applying \eqref{12} to \eqref{158} we
then have
\begin{equation*}\label{167}
H(\psi)-K(\psi)=-c_{3}\tau^{l}+\frac{lc_{1}}{\alpha^{2}}\tau^{l}
+\frac{c_{2}}{\alpha^{2}}\tau^{l+1}.
\end{equation*}
Thus if $\tau_{0}$ is sufficiently small, then by defining
$\underline{\psi}$ to have the structure of $\psi$ with $\alpha$
sufficiently small
\begin{equation*}\label{168}
H(\underline{\psi})-K(\underline{\psi})\geq \alpha^{-1}\tau^{l},
\end{equation*}
and by defining $\overline{\psi}$ to have the structure of $\psi$
with $\alpha$ sufficiently large
\begin{equation*}\label{169}
H(\overline{\psi})-K(\overline{\psi})\leq-\frac{c^{-1}}{2}\tau^{l}.
\end{equation*}

$\operatorname{(2)}$
Assume that $\frac{1}{2}\leq b <\frac{l+1}{2}$, and let $\chi(\tau)$
be a function of $\tau$ alone satisfying \eqref{157}.
According to \eqref{158} and \eqref{15}
\begin{equation*}\label{170}
H(\chi)-K(\chi)=c_{4}\tau^{l}
-\frac{c_{1}}{\tau^{2b}\chi'^{2}}
\left(\frac{\chi''}{\chi'}+\frac{2b}{\tau}\right)
+\frac{c_{2}}{\tau^{2b}\chi'^{2}},
\end{equation*}
where $c_{4}$ is a bounded function. This suggests that we study
the ODE
\begin{equation}\label{171}
\frac{1}{\tau^{2b}\chi'^{2}}\left(\frac{\chi''}{\chi'}
+\frac{2b}{\tau}\right)=\mp \lambda \tau^{\widetilde{l}}.
\end{equation}
The choice of $\mp\lambda$ will be used when defining the sub and
supersolutions, respectively. In order to solve \eqref{171}
observe that
this equation is equivalent to
\begin{equation*}\label{172}
\left(\frac{\tau^{-4b}}{\chi'^{2}}\right)'=\pm2\lambda\tau^{\widetilde{l}-2b},
\end{equation*}
and hence
\begin{equation}\label{173}
\chi'=-\left(\frac{\pm 2\lambda}{\widetilde{l}+1-2b}\tau^{\widetilde{l}+1+2b}
+\Lambda^{2}\tau^{4b}\right)^{-\frac{1}{2}}
\end{equation}
for some constant $\Lambda$. We choose $\Lambda>0$, since if $\Lambda=0$ and $-\lambda$ is chosen in \eqref{171}, then \eqref{173} yields similar
asymptotics as in $\operatorname{(1)}$.
Notice also that the expression inside the square root is positive for
small $\tau$, since $4b<\widetilde{l}+1+2b$ and $\Lambda>0$. It follows that
\begin{equation*}\label{174}
\frac{\Lambda^{-1}}{2}\tau^{-2b}\leq -\chi'\leq 2\Lambda^{-1}\tau^{-2b}
\end{equation*}
for sufficiently small $\tau$. This shows that \eqref{157} holds. Furthermore
\begin{equation*}\label{175}
H(\chi)-K(\chi)=c_{4}\tau^{l} \pm c_{1}\lambda\tau^{\widetilde{l}}+c_{5}\Lambda^{2}\tau^{2b},
\end{equation*}
for some bounded function $c_{5}$. We define $\underline{\chi}$,
$\overline{\chi}$ to be the solutions of \eqref{171} constructed above and
corresponding to $-\lambda$, $+\lambda$ respectively. By choosing
$\lambda$ sufficiently large the desired result follows,
since $\widetilde{l}\leq\min(l,2b)$.
\end{proof}

The existence of two subsolutions with different asymptotics,
when $\frac{1}{2}\leq b<\frac{l+1}{2}$, indicates that there
will be two different blow-up solutions of
the generalized Jang equation, one corresponding to each of the
distinct asymptotics.  These solutions will arise from two
different sequences of solutions to the
regularized equation. More precisely, consider the generalized
Jang equation with $\phi=\tau^{b}\widetilde{\phi}$ replaced by
$\phi_{\delta}=(\tau+\delta)^{b}\widetilde{\phi}$, as in \eqref{eq-deltaModifiedJang}.
According to the proof of Proposition \ref{prop1},  for each $\delta>0$, there exists a blow-up solution
$t=f_{\delta}$ which asymptotically
approaches the cylinder $\partial M\times\mathbb{R}$ at a rate
given by \eqref{154}. A subsequence will then converge to a
blow-up solution of the generalized Jang equation as
$\delta\rightarrow 0$,
and this solution will satisfy the
asymptotics \eqref{16a}. The second sequence of solutions will arise from the
$\delta$-regularized generalized Jang equation, and will be
constructed to have finite values at $\partial M$. However, as
$\delta\rightarrow 0$ these boundary values will
become arbitrarily large, and a subsequence will converge to a
blow-up solution of the generalized Jang equation which satisfies
the asymptotics \eqref{16b}.

We first analyze the case of the solutions $t=f_{\delta}$ with asymptotics
given by \eqref{154}. Consider the $\delta$-regularized generalized
Jang equation
applied to a function $\psi(\tau)$ as in \eqref{158},
\begin{equation}\label{176}
H_{\delta}(\psi)-K_{\delta}(\psi)=-\theta_{+}(S_{\tau})
-\frac{c_{1}}{(\tau+\delta)^{2b}\psi'^{2}}\left(\frac{\psi''}{\psi'}
+\frac{2b}{\tau+\delta}\right)
+\frac{c_{2}}{(\tau+\delta)^{2b}\psi'^{2}}.
\end{equation}
Let $\underline{\psi}$ be the subsolution constructed in (1) of Lemma
\ref{lemma1}, and consider $$\underline{\psi}_{\delta}(\tau):=\underline{\psi}(\tau+\delta).$$
It is clear from the proof of Lemma \ref{lemma1} that
$\underline{\psi}_{\delta}$ is a subsolution of the $\delta$-regularized
generalized Jang equation,
\begin{equation}\label{177}
H_{\delta}(\underline{\psi}_{\delta})
-K_{\delta}(\underline{\psi}_{\delta})\geq\lambda(\tau+\delta)^{l}>0.
\end{equation}
We will show that $\underline{\psi}_{\delta}$ acts as a lower
barrier for the regularized solutions $f_{\delta}$.

\begin{prop}\label{prop3}
If $f_{\delta}|_{S_{\tau_{0}}}\geq\underline{\psi}_{\delta}(\tau_{0})$ then
$f_{\delta}\geq\underline{\psi}_{\delta}$ for all $\tau\in[0,\tau_{0}]$.
\end{prop}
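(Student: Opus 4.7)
The plan is a quasilinear comparison argument, exploiting that $\underline{\psi}_{\delta}$ is a strict subsolution while $f_{\delta}$ solves the equation, combined with an analysis of the two boundary components of the slab $\Omega_{\tau_0}:=\{0<\tau\le\tau_0\}$. I set $v:=\underline{\psi}_{\delta}-f_{\delta}$. Since the generalized Jang operator $Q_{\delta}[f]:=H_{\delta}(f)-K_{\delta}(f)$ depends on $f$ only through $\nabla f$ and $\nabla^{2}f$, applying the mean-value theorem to the one-parameter family $f_{\delta}+t(\underline{\psi}_{\delta}-f_{\delta})$, $t\in[0,1]$, yields a linear second-order operator $L=A^{ij}(x)\nabla_{ij}+B^{i}(x)\nabla_{i}$ with strictly elliptic principal part and no zeroth-order term, satisfying
\begin{equation*}
Lv=Q_{\delta}[\underline{\psi}_{\delta}]-Q_{\delta}[f_{\delta}]\ge \lambda(\tau+\delta)^{l}>0
\end{equation*}
by \eqref{177}.

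Next I examine the boundary of $\Omega_{\tau_0}$. On $S_{\tau_0}$ one has $v\le 0$ directly from the hypothesis $f_{\delta}|_{S_{\tau_0}}\ge\underline{\psi}_{\delta}(\tau_0)$. Near $\partial M$ the blow-up behavior of $f_{\delta}$ from Proposition \ref{prop1} forces $f_{\delta}\to+\infty$ as $\tau\to 0$, while $\underline{\psi}_{\delta}(\tau)=\underline{\psi}(\tau+\delta)$ is bounded above by $\underline{\psi}(\delta)<\infty$. Consequently, there exists $\tau_{1}\in(0,\tau_0)$ with $v<0$ on $S_{\tau}$ for every $\tau\in(0,\tau_{1}]$.

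Applying the weak maximum principle to $Lv>0$ on the compact region $\{\tau_{1}\le\tau\le\tau_0\}$ then finishes the proof: because $L$ has no zeroth-order term, $v$ cannot attain a positive maximum at an interior point, and $v\le 0$ on both boundary pieces, so $v\le 0$ throughout. Letting $\tau_{1}\to 0$ extends this to all of $\Omega_{\tau_0}$, which is the claim.

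The main obstacle is verifying that $L$ really has the structure required for the maximum principle, namely uniform ellipticity along the interpolation path and absence of any zeroth-order term. This is handled by reading off \eqref{7}: the operator $Q_{\delta}$ depends algebraically on $(x,\nabla f,\nabla^{2}f)$ with strictly positive-definite principal part $\phi_{\delta}\,\overline{g}^{ij}(\nabla f)$ for every value of $|\nabla f|$, and the $K_{\delta}$ contribution depends on $f$ only through $\nabla f$; thus the linearization produces only first- and second-order terms, and the comparison succeeds without any a priori bound on $|\nabla f_{\delta}|$ or $|\nabla \underline{\psi}_{\delta}|$ on $\Omega_{\tau_0}$.
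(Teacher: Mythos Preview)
Your proof is correct and follows essentially the same route as the paper: a maximum-principle comparison using that $\underline{\psi}_{\delta}$ is a strict subsolution, $f_{\delta}$ is a solution, and the boundary inequalities hold both at $S_{\tau_0}$ (by hypothesis) and near $\partial M$ (because $\underline{\psi}_{\delta}(0)=\underline{\psi}(\delta)$ is finite while $f_{\delta}$ blows up). The only cosmetic difference is that the paper argues directly at a hypothetical interior negative minimum of $w_{\delta}=f_{\delta}-\underline{\psi}_{\delta}$, where $\nabla f_{\delta}=\nabla\underline{\psi}_{\delta}$ forces the first-order parts of the two operators to coincide, so no linearization is needed; your mean-value linearization achieves the same end and your restriction to $\{\tau_{1}\le\tau\le\tau_{0}\}$ correctly sidesteps any degeneracy of the coefficients near $\partial M$.
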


\begin{proof}
Consider the function $w_{\delta}=f_{\delta}-\underline{\psi}_{\delta}$.
Since $\underline{\psi}_{\delta}(0)$ is finite and $f_{\delta}$ blows-up at
$\partial M$, we have that $w_{\delta}|_{\partial M}\geq 0$; moreover, by
assumption $w_{\delta}|_{S_{\tau_{0}}}\geq 0$. Suppose that $w_{\delta}$
attains
an interior negative minimum, then at that point
\begin{equation*}\label{178}
\nabla f_{\delta}=\nabla\underline{\psi}_{\delta},
\quad \nabla^{2}w_{\delta}\geq 0.
\end{equation*}
It follows that at the minimum point
\begin{equation*}\label{179}
(H_{\delta}(f_{\delta})-K_{\delta}(f_{\delta}))
-(H_{\delta}(\underline{\psi}_{\delta})-K_{\delta}(\underline{\psi}_{\delta}))
=\left(g^{ij}-\frac{\phi_{\delta}^{2}f_{\delta}^{i}f_{\delta}^{j}}
{1+\phi_{\delta}^{2}|\nabla f_{\delta}|^{2}}\right)
\frac{\phi_{\delta}\nabla_{ij}w_{\delta}}{\sqrt{1+\phi_{\delta}^{2}|\nabla f_{\delta}|^{2}}}\geq 0.
\end{equation*}
On the other hand, by \eqref{177}
\begin{equation*}\label{180}
(H_{\delta}(f_{\delta})-K_{\delta}(f_{\delta}))
-(H_{\delta}(\underline{\psi}_{\delta})-K_{\delta}(\underline{\psi}_{\delta}))\leq
-\lambda(\tau+\delta)^{l}<0.
\end{equation*}
This contradiction shows that $w_{\delta}\geq 0$ for all $\tau\in[0,\tau_{0}]$.
\end{proof}


We are now ready to prove the primary existence result for the
first asymptotics of Theorem \ref{thm4}.

\begin{theorem}\label{thm5}
Suppose that $(M,g,k)$ is a smooth, asymptotically flat initial data set,
with outermost apparent horizon boundary $\partial M$ consisting of a
single future apparent horizon component.
Suppose further that $\phi$ is a smooth function,
strictly positive away from
$\partial M$ and satisfying
\eqref{10} and \eqref{14}, and that $\theta_+$
satisfies \eqref{12} for $\tau\in[0,\tau_{0}]$.
If $-\frac{l-1}{2}\leq b <\frac{l+1}{2}$ then
there exists a smooth solution $f$ of the generalized
Jang equation \eqref{7}, satisfying \eqref{11}, and such that the
following estimates hold in a neighborhood of $\partial M$
\begin{align}\label{181}
\begin{split}
\alpha^{-1}\tau^{-b-\frac{l-1}{2}}+\beta^{-1} &\leq  f \leq \alpha\tau^{-b-\frac{l-1}{2}}+\beta\text{ }\text{ }\text{ if }\text{ }\text{ }
-\frac{l-1}{2}< b<\frac{l+1}{2},\\
-\alpha^{-1}\log\tau+\beta^{-1} &\leq  f \leq -\alpha\log\tau
+\beta\text{ }\text{ }\text{ if }\text{ }\text{ }b=-\frac{l-1}{2},
\end{split}
\end{align}
for some positive constants $\alpha$ and $\beta$.
\end{theorem}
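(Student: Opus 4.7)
The plan is to upgrade Proposition \ref{prop1} from a one-sided bound to the two-sided estimate \eqref{181} by combining the subsolution $\underline{\psi}$ of Lemma \ref{lemma1}$\operatorname{(1)}$ with the comparison principle of Proposition \ref{prop3}. Denote by $f_{\delta}$ the blow-up solutions of the $\delta$-regularized generalized Jang equation \eqref{eq-deltaModifiedJang} produced in Proposition \ref{prop1}, each of which satisfies the upper bound from \eqref{154} on the collar $\tau\le\tau_{0}$ about $\partial M$, uniformly in $\delta$. Passing to a subsequence $\delta_{i}\to 0$, Proposition \ref{prop1} already yields a limiting solution $f$ of \eqref{7} on $M$, smooth on $M\setminus\partial M$, which satisfies the decay \eqref{11} and the upper bound in \eqref{181}. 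Only the lower bound remains.

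For the lower bound I would set $\underline{\psi}_{\delta}(\tau):=\underline{\psi}(\tau+\delta)$. The computations that established Lemma \ref{lemma1}$\operatorname{(1)}$ are unaffected by replacing $\tau$ with $\tau+\delta$ and $\phi$ with $\phi_{\delta}$, so $\underline{\psi}_{\delta}$ is a strict subsolution of the $\delta$-regularized equation, exactly as in \eqref{177}. According to Proposition \ref{prop3}, once the single boundary inequality $f_{\delta}\ge\underline{\psi}_{\delta}(\tau_{0})$ on $S_{\tau_{0}}$ is verified, the pointwise estimate $f_{\delta}\ge\underline{\psi}_{\delta}$ propagates across the entire collar $0\le\tau\le\tau_{0}$. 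Because $\underline{\psi}$ is decreasing we have $\underline{\psi}_{\delta}(\tau_{0})\le\underline{\psi}(\tau_{0})$, so it is enough to arrange $f_{\delta}\ge\underline{\psi}(\tau_{0})$ on $S_{\tau_{0}}$ for all sufficiently small $\delta$ in the convergent subsequence.

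This last step is the main obstacle, and my plan is to exploit the freedom in the additive constant of $\underline{\psi}$. Lemma \ref{lemma1} pins down the coefficient of the leading singular term of $\underline{\psi}$ (to enforce the subsolution property) but leaves the additive constant (corresponding to $\beta^{-1}$ in \eqref{181}) arbitrary. Since $f$ is smooth on $M\setminus\partial M$, its restriction to the compact hypersurface $S_{\tau_{0}}$ is bounded below by some finite $m_{0}$; I would then fix $\underline{\beta}$ small enough that $\underline{\psi}(\tau_{0})<m_{0}$. Smooth convergence $f_{\delta_{i}}\to f$ on $S_{\tau_{0}}$ then guarantees $f_{\delta_{i}}|_{S_{\tau_{0}}}\ge\underline{\psi}(\tau_{0})\ge\underline{\psi}_{\delta_{i}}(\tau_{0})$ for all large $i$, and Proposition \ref{prop3} yields $f_{\delta_{i}}\ge\underline{\psi}_{\delta_{i}}$ throughout the collar. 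Passing to the limit $\delta_{i}\to 0$ produces $f\ge\underline{\psi}$ on $0<\tau\le\tau_{0}$, which is the desired lower bound; as a byproduct, $f\to+\infty$ at $\partial M$, confirming the blow-up asserted by the theorem. The extension to several future and past horizon components, promised at the start of Section \ref{sec6}, is carried out by running the same argument in a collar of each component, with past components handled via the symmetry $(f,k)\mapsto(-f,-k)$ that preserves the generalized Jang equation.
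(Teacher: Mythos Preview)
Your proposal is correct and follows essentially the same route as the paper: obtain the upper bound and the limiting solution $f$ from Proposition \ref{prop1}, set $\underline{\psi}_{\delta}(\tau)=\underline{\psi}(\tau+\delta)$, use the smooth convergence $f_{\delta_i}\to f$ on $S_{\tau_0}$ together with the freedom in the additive constant of $\underline{\psi}$ to arrange the boundary inequality, and then invoke Proposition \ref{prop3} before passing to the limit. The only organizational difference is that the paper defers the remark about multiple horizon components and the $(f,k)\mapsto(-f,-k)$ symmetry to the proof of Theorem \ref{thm4}, since the present statement already assumes a single future component.
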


\begin{proof}
According to the proof of Proposition \ref{prop1}, for each $\delta>0$, there exists a blow-up
solution $t=f_{\delta}$ of equation \eqref{7} with $\phi$ replaced
by $\phi_{\delta}$,
and this solution asymptotically approaches the cylinder
$\partial M\times\mathbb{R}$ at a rate given by \eqref{154}.
As in the proof of Proposition \ref{prop1}, there is a subsequence
$\delta_{i}\rightarrow 0$ such that these graphs converge smoothly
away from the boundary to a solution $t=f$ of the generalized Jang
equation \eqref{7}, which satisfies \eqref{11}. This implies that
there exists
a fixed value for $\underline{\psi}_{\delta_{i}}(\tau_{0})$,
independent of $\delta_{i}$,
such that $f_{\delta_{i}}|_{S_{\tau_{0}}}\geq\underline{\psi}_{\delta_{i}}(\tau_{0})$.
In fact by writing
$$\underline{\psi}_{\delta_{i}}(\tau)=\alpha_i^{-1}(\tau+\delta_{i})^{-b-\frac{l-1}2}+\beta_i^{-1},$$
we can achieve this by choosing fixed $\alpha_i=\alpha$ and $\beta_{i}=\beta$ for all $i$. Therefore $\underline{\psi}_{\delta_{i}}$ converges to a function
$\underline{\psi}$ satisfying the
estimates \eqref{159}. Moreover by Proposition \ref{prop3}, $f_{\delta_{i}}\geq\underline{\psi}_{\delta_{i}}$
for $\tau\in[0,\tau_{0}]$, and therefore $f$ satisfies \eqref{181}.
\end{proof}

Blow-up solutions of the generalized Jang equation satisfying
\eqref{16b} will now be constructed. These will arise from a sequence
of solutions
to the $\delta$-regularized equation with finite boundary values.
Assume that $\frac{1}{2}\leq b <\frac{l+1}{2}$ and that \eqref{15}
holds, and let $\underline{\chi}$, $\overline{\chi}$ be the functions
constructed in Lemma \ref{lemma1}. Set $$\underline{\chi}_{\delta}(\tau):=\underline{\chi}(\tau+\delta)
\quad\text{and}\quad
\overline{\chi}_{\delta}(\tau):=\overline{\chi}(\tau+\delta),$$ and note that these translated functions
are sub and supersolutions for the $\delta$-regularized generalized Jang
equation. In particular, from \eqref{176} and the proof of Lemma
\ref{lemma1} we have
\begin{equation}\label{183}
H_{\delta}(\underline{\chi}_{\delta})
-K_{\delta}(\underline{\chi}_{\delta})
\geq\lambda(\tau+\delta)^{\widetilde{l}},
\text{ }\text{ }\text{ }\text{ }H_{\delta}(\overline{\chi}_{\delta})
-K_{\delta}(\overline{\chi}_{\delta})
\leq-\lambda(\tau+\delta)^{\widetilde{l}}.
\end{equation}
Let $T_{\delta}=\delta^{1-2b}$ if $b>\frac{1}{2}$ and $T_{\delta}=-\log\delta$ if $b=\frac{1}{2}$, and observe that
$T_{\delta}\sim\underline{\chi}_{\delta}(0)\sim\overline{\chi}_{\delta}(0)$.
Consider now the following boundary value problem for the
$\delta$-regularized generalized Jang equation
\begin{equation}\label{184}
H_{\delta}(h_{\delta})-K_{\delta}(h_{\delta})=0
\text{ }\text{ }\text{ on }\text{ }\text{ }M,
\end{equation}
\begin{equation}\label{185}
h_{\delta}=T_{\delta}\text{ }\text{ }\text{ on }\text{ }\text{ }
\partial M,\quad
h_{\delta}\rightarrow 0\text{ }\text{ }\text{ as }\text{ }\text{ }
|x|\rightarrow\infty.
\end{equation}

\begin{prop}\label{prop4}
Suppose that $\partial M$ is an outermost future apparent horizon having one component,
and that \eqref{15} holds for $\tau\in[0,\tau_{0}]$.
If $\frac{1}{2}\leq b <\frac{l+1}{2}$, then there
exists a smooth solution of \eqref{184}, \eqref{185}, which is $C^{0}$ up to the boundary, and satisfies the following estimates in a
neighborhood of $\partial M$
\begin{align}\label{186}
\begin{split}
\alpha^{-1}(\tau+\delta)^{1-2b}+\beta^{-1} &\leq h_{\delta} \leq \alpha(\tau+\delta)^{1-2b}+\beta\text{ }\text{ }\text{ if }\text{ }\text{ }
\frac{1}{2}< b<\frac{l+1}{2},\\
-\alpha^{-1}\log(\tau+\delta)+\beta^{-1} &\leq h_{\delta}
\leq -\alpha\log(\tau+\delta)+\beta
\text{ }\text{ }\text{ if }\text{ }\text{ }b=\frac{1}{2},
\end{split}
\end{align}
for some positive constants $\alpha$ and $\beta$ independent of $\delta$.
\end{prop}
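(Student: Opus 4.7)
The plan is to produce $h_\delta$, for each fixed $\delta>0$, by the same continuity-method scheme used in Section~\ref{sec4}, exploiting the crucial observation that $\phi_\delta=(\tau+\delta)^b\widetilde\phi$ is smooth and \emph{strictly positive up to and including $\partial M$}. Consequently equation \eqref{184} is uniformly quasilinear elliptic on all of $M$ (with ellipticity constants depending on $\delta$), and the apparatus of Sections~\ref{sec2}--\ref{sec4} applies with $\partial\widetilde M$ replaced by $\partial M$ and the large boundary value $\vartheta/(2\varepsilon)$ in \eqref{134} replaced by the finite value $T_\delta$. I would therefore obtain $h_\delta$ by a double approximation: an $\varepsilon$-regularization analogous to \eqref{133} solved on bounded exhausting domains $M_R:=M\cap B_R$, followed by the successive limits $R\to\infty$ and $\varepsilon\to 0$.

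Concretely, for each $\varepsilon,R>0$ I would solve
\begin{equation*}
H_\delta(h_{\delta,\varepsilon}^R)-K_\delta(h_{\delta,\varepsilon}^R)=\varepsilon h_{\delta,\varepsilon}^R\text{ on }M_R,\qquad h_{\delta,\varepsilon}^R|_{\partial M}=T_\delta,\quad h_{\delta,\varepsilon}^R|_{\partial B_R\cap M}=0,
\end{equation*}
by the continuity method of Section~\ref{sec4} along the family $H_\delta-\varrho K_\delta=\varepsilon h$. The $C^0$ bound is immediate from the maximum principle. For the boundary gradient estimate at $\partial M$, I would use the translated barriers $\underline\chi_\delta(\tau):=\underline\chi(\tau+\delta)$ and $\overline\chi_\delta(\tau):=\overline\chi(\tau+\delta)$ from Lemma~\ref{lemma1}. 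Since both are positive and $H_\delta(\overline\chi_\delta)-K_\delta(\overline\chi_\delta)\leq 0\leq\varepsilon\overline\chi_\delta$, the upper barrier passes through the regularization automatically; the lower barrier inequality
\begin{equation*}
H_\delta(\underline\chi_\delta)-K_\delta(\underline\chi_\delta)\geq\lambda(\tau+\delta)^{\widetilde l}\geq\varepsilon\underline\chi_\delta
\end{equation*}
holds provided $\lambda$ is chosen sufficiently large, which Lemma~\ref{lemma1} allows. After tuning the constants $\alpha,\beta$ in \eqref{183} so that $\underline\chi_\delta(0)\leq T_\delta\leq\overline\chi_\delta(0)$, and using the Harnack inequality of Section~\ref{sec3} together with the interior $C^0$ bound to arrange the analogous matching on $S_{\tau_0}$, the usual comparison argument sandwiches
\begin{equation*}
\underline\chi_\delta(\tau)\leq h_{\delta,\varepsilon}^R(x)\leq\overline\chi_\delta(\tau)\quad\text{for }\tau\in[0,\tau_0],
\end{equation*}
which both supplies the boundary gradient estimate at $\partial M$ and establishes the asymptotics \eqref{186} with constants independent of $\varepsilon$ and $R$. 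The outer boundary gradient estimate on $\partial B_R\cap M$ is routine.

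With these boundary estimates the Bernstein argument of Section~\ref{sec4} gives a global $C^1$ bound, Schauder theory upgrades to $C^{2,\alpha}$, and the continuity method produces $h_{\delta,\varepsilon}^R$. Sending first $R\to\infty$ (using the standard asymptotic barrier at spatial infinity to enforce the decay in \eqref{185}) and then $\varepsilon\to 0$, and invoking the parametric estimates of Theorem~\ref{thm-ParametricEstimates} together with the Harnack inequality of Theorem~\ref{thm-HarnackInequality}---both uniform in $\varepsilon$ and $R$ at fixed $\delta>0$ because $\phi_\delta$ is uniformly bounded below---extracts subsequential limits $h_\delta$ solving \eqref{184}--\eqref{185}. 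The sandwich estimate passes to the limit, yielding simultaneously the continuity of $h_\delta$ up to $\partial M$ and the asymptotics \eqref{186}.

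The main technical obstacle I anticipate is making the subsolution inequality survive the $\varepsilon$-regularization near $\partial M$: there one has $\underline\chi_\delta\sim T_\delta\sim \delta^{1-2b}$, while $(\tau+\delta)^{\widetilde l}$ is only bounded below by $\delta^{\widetilde l}$ with $\widetilde l>2b-1$, so the required bound $\lambda\delta^{\widetilde l}\gtrsim\varepsilon\delta^{1-2b}$ forces $\lambda$ to be taken large as $\delta\to 0$. This is harmless because Lemma~\ref{lemma1} permits $\lambda$ arbitrarily large---one chooses $\lambda$ after fixing $\delta$ but before sending $\varepsilon\to 0$---but it is the one place where the order of quantifiers must be tracked carefully. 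A secondary issue is that $\underline\chi_\delta$ and $\overline\chi_\delta$ are constructed only on the tubular neighborhood $[0,\tau_0]$, so the Harnack inequality must be invoked to match $h_{\delta,\varepsilon}^R$ at $\tau=\tau_0$ and convert the local barriers into bona fide global comparison functions.
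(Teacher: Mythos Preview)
Your overall scheme—$\varepsilon$-regularize, solve by continuity on exhausting domains with barrier-based boundary gradient estimates, then pass to limits—is exactly the paper's. The real divergence is in the regularization: the paper replaces $\varepsilon h$ by $\varepsilon(h-\chi)$, where $\chi$ is a fixed smooth function with $\chi=T_\delta$ on $\partial M$, $\sup|\chi|\le T_\delta$, $\chi$ decreasing in $\tau$ near $\partial M$, and $\chi\equiv 0$ near infinity. This shift is precisely what dissolves the obstacle you flag. In the comparison at a negative interior minimum of $w=h_{\delta,\varepsilon}-\underline\chi_{\delta,\varepsilon}$ one gets
\[
0\le \varepsilon(h_{\delta,\varepsilon}-\chi)-\lambda(\tau+\delta)^{\widetilde l}
=\varepsilon w+\varepsilon(\underline\chi_{\delta,\varepsilon}-\chi)-\lambda(\tau+\delta)^{\widetilde l},
\]
and since the barrier parameters are chosen so that $\underline\chi_{\delta,\varepsilon}\le\chi$, every term on the right is nonpositive, a contradiction—no balancing of $\lambda$ against $\varepsilon\delta^{1-2b-\widetilde l}$ is needed, and no quantifier juggling arises.

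A second, smaller difference: the paper runs the barrier argument in two stages. First it uses $\varepsilon$-dependent barriers (with values $\pm(T_\delta+c\varepsilon^{-1})$ at $\tau_0$, and an \emph{increasing} supersolution $\widetilde\chi_{\delta,\varepsilon}$ for the upper gradient bound) purely to close the continuity method; only afterward, once $h_{\delta,\varepsilon}|_{S_{\tau_0}}$ is known to be uniformly bounded, does it invoke $\varepsilon$-independent barriers $\underline\chi_\delta\le\overline\chi_\delta$ (with $\underline\chi_\delta\le\chi\le\overline\chi_\delta$) to obtain \eqref{186}. You attempt both at once, which forces the barrier boundary values to match $T_\delta$ exactly while the constants $\alpha,\beta$ remain $\delta$-free—this tuning is more delicate than your sketch suggests, and the paper's separation avoids it. Finally, for the global $C^1$ bound independent of $\varepsilon$ the paper cites Theorem~15.2 of \cite{GilbargTrudinger} rather than the Bernstein argument of Section~\ref{sec4}.
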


\begin{proof}
We first solve a boundary value problem for the
$\varepsilon\delta$-regularized generalized Jang equation:
\begin{equation}\label{184.1}
H_{\delta}(h_{\delta,\varepsilon})-K_{\delta}(h_{\delta,\varepsilon})
=\varepsilon (h_{\delta,\varepsilon}-\chi)
\text{ }\text{ }\text{ on }\text{ }\text{ }M,
\end{equation}
\begin{equation}\label{185.1}
h_{\delta,\varepsilon}=T_{\delta}\text{ }\text{ }\text{ on }\text{ }\text{ }
\partial M,\quad
h_{\delta,\varepsilon}\rightarrow 0\text{ }\text{ }\text{ as }\text{ }\text{ }
|x|\rightarrow\infty,
\end{equation}
where $\chi$ is a smooth function on $M$ such that $\chi=T_\delta$ on
$\partial M$, $\sup_{M}|\chi|\leq T_{\delta}$,
and $\chi\equiv 0$ in a neighborhood of spatial infinity. Moreover,
we require that $\chi$ is decreasing in $\tau$ near $\partial M$.

We now employ the continuity method to show that a unique
solution exists for \eqref{184.1} and \eqref{185.1}. This is similar to arguments used
for \eqref{147} and \eqref{148}.
To begin, observe that the maximum principle yields the bound
\begin{equation*}\label{186.1}
\sup_{M}|h_{\delta,\varepsilon}|\leq T_{\delta}+c\varepsilon^{-1}.
\end{equation*}
To establish boundary
gradient estimates, we construct subsolutions and supersolutions.

Fix a $\tau_0$ sufficiently small. For subsolutions, take $\underline{\chi}_{\delta, \varepsilon}$
as in Lemma \ref{lemma1}. Specifically, $\underline{\chi}_{\delta, \varepsilon}$
satisfies
\begin{equation*}
H_{\delta}(\underline{\chi}_{\delta, \varepsilon})
-K_{\delta}(\underline{\chi}_{\delta, \varepsilon})
\geq\lambda(\tau+\delta)^{\widetilde{l}},
\end{equation*}
for some positive $\lambda$.
Since this subsolution is a solution to a second order ODE, there are
two free parameters that may be chosen appropriately so that
\begin{equation*}\label{188}
\underline{\chi}_{\delta,\varepsilon}(0)=T_{\delta},
\text{ }\text{ }\text{ }\text{ }\underline{\chi}_{\delta,\varepsilon}(\tau_{0})
=-(T_{\delta}+c\varepsilon^{-1}),
\end{equation*}
and
\begin{equation*}
\underline{\chi}_{\delta,\varepsilon}\leq\chi
\text{ }\text{ }\text{ for }\text{ }\text{ }\tau\in[0,\tau_{0}].
\end{equation*}
This shows that $w_{\delta,\varepsilon}:=h_{\delta,\varepsilon}
-\underline{\chi}_{\delta,\varepsilon}\geq 0$
along the surfaces $\partial M$ and $S_{\tau_{0}}$.
Suppose that $w_{\delta,\varepsilon}$ achieves an interior negative minimum.
Then at that point, a standard comparison argument yields
\begin{equation*}\label{188.1}
[H_{\delta}(h_{\delta,\varepsilon})-K_{\delta}(h_{\delta,\varepsilon})]
-[H_{\delta}(\underline{\chi}_{\delta,\varepsilon})
-K_{\delta}(\underline{\chi}_{\delta,\varepsilon})]\geq 0.
\end{equation*}
On the other hand, according to \eqref{183}
and $w_{\delta,\varepsilon}<0$ at the point in question, we have
\begin{align*}\label{188.2}
\begin{split}
[H_{\delta}(h_{\delta,\varepsilon})-K_{\delta}(h_{\delta,\varepsilon})]
-[H_{\delta}(\underline{\chi}_{\delta,\varepsilon})
-K_{\delta}(\underline{\chi}_{\delta,\varepsilon})]
&\leq\varepsilon (h_{\delta,\varepsilon}-\chi)-\lambda(\tau+\delta)^{\widetilde{l}}\\
&=\varepsilon w_{\delta,\varepsilon}+\varepsilon(\underline{\chi}_{\delta,\varepsilon}-\chi)
-\lambda(\tau+\delta)^{\widetilde{l}}<0.
\end{split}
\end{align*}
It follows that $h_{\delta,\varepsilon}\geq\underline{\chi}_{\delta,\varepsilon}$ for all $\tau\in[0,\tau_{0}]$,
and this yields a lower bound for $\partial_{\tau}h_{\delta,\varepsilon}$.

For supersolutions, let $\widetilde{\chi}$ be defined in the same way as $\overline{\chi}$,
from the proof of Lemma \ref{lemma1}, except that \eqref{173} is replaced by
\begin{equation*}\label{187}
\widetilde{\chi}'=
\left(\frac{-2\lambda}{\widetilde{l}+1-2b}\tau^{\widetilde{l}+1+2b}
+\Lambda^{2}\tau^{4b}\right)^{-\frac{1}{2}}.
\end{equation*}
Then $\widetilde{\chi}_{\delta,\varepsilon}:=\widetilde{\chi}(\tau+\delta)$ satisfies
\begin{equation*}
H_{\delta}(\widetilde{\chi}_{\delta, \varepsilon})
-K_{\delta}(\widetilde{\chi}_{\delta, \varepsilon})
\leq-\lambda(\tau+\delta)^{\widetilde{l}}.
\end{equation*}
Choose the two parameters defining $\widetilde{\chi}_{\delta,\varepsilon}$ appropriately so that
\begin{equation*}\label{187.1}
\widetilde{\chi}_{\delta,\varepsilon}(0)=T_{\delta},\text{ }\text{ }\text{ }\text{ }\widetilde{\chi}_{\delta,\varepsilon}(\tau_{0})= T_{\delta}+c\varepsilon^{-1}.
\end{equation*}
Then $\overline{\chi}_{\delta,\varepsilon}\geq\chi$
for all $\tau\in[0,\tau_{0}]$. As above, a comparison argument can be
employed to show that $h_{\delta,\varepsilon}\leq\widetilde{\chi}_{\delta,\varepsilon}$ for all $\tau\in[0,\tau_{0}]$.
This yields an upper bound for $\partial_{\tau}h_{\delta,\varepsilon}$. A solution of \eqref{184.1}, \eqref{185.1} is now guaranteed, and uniqueness follows
from a maximum principle argument.

As in the proof of Theorem \ref{thm5}, we may now let $\varepsilon\rightarrow 0$ and extract a subsequence of solutions,
still denoted $h_{\delta,\varepsilon}$, which converges smoothly away from the horizon to a solution $h_{\delta}$ of \eqref{184}. Moveover
$$h_\delta\to 0\quad\text{as}\quad|x|\to\infty.$$ We need to show that
$$h_\delta =T_\delta \quad\text{on}\quad\partial M.$$
Note that the functions $h_{\delta,\varepsilon}$ are uniformly bounded,
independent of $\varepsilon$ (in fact independent of $\delta$ as well),
when restricted
to the surface $S_{\tau_{0}}$. It follows that $h_{\delta,\varepsilon}|_{S_{\tau_{0}}}<T_{\delta}$ for $\delta$ sufficiently small. We may then choose sub and supersolutions $\underline{\chi}_{\delta}\leq\overline{\chi}_{\delta}$,
satisfying \eqref{183}, the estimate \eqref{186} with constants $\alpha$ and $\beta$ independent of $\varepsilon$, and such that
\begin{equation*}\label{188.3}
\underline{\chi}_{\delta}(0)= h_{\delta,\varepsilon}|_{\partial M}=\overline{\chi}_{\delta}(0),\text{ }\text{ }\text{ }\text{ }
\underline{\chi}_{\delta}(\tau_{0})\leq h_{\delta,\varepsilon}|_{S_{\tau_{0}}}\leq\overline{\chi}_{\delta}(\tau_{0}),
\end{equation*}
and
\begin{equation*}
\underline{\chi}_{\delta}\leq\chi\leq\overline{\chi}_{\delta}
\text{ }\text{ }\text{ for }\text{ }\text{ }\tau\in[0,\tau_{0}].
\end{equation*}
A comparison argument, as in the first half of this proof, may now be used to establish \eqref{186} for $h_{\delta,\varepsilon}$.
This establishes $C^{0}$ and boundary gradient estimates, independent of $\varepsilon$. We may now apply Theorem 15.2 of \cite{GilbargTrudinger} to
obtain global $C^{1}$ bounds independent of $\varepsilon$. It follows that the limit $h_{\delta}$ is $C^{0}$ up to the boundary.

Lastly we may take the subsequential limit $h_{\delta}\rightarrow h$ as $\delta\rightarrow 0$, to find that the
functions $h_{\delta}$ are uniformly bounded, independent of $\delta$, when restricted
to the surface $S_{\tau_{0}}$. Then as in the previous paragraph, sub and supersolutions $\underline{\chi}_{\delta}$ and $\overline{\chi}_{\delta}$ may
be used to obtain the estimate \eqref{186} with constants $\alpha$ and $\beta$ independent of $\delta$.
\end{proof}

\begin{theorem}\label{thm6}
Suppose that $(M,g,k)$ is a smooth, asymptotically flat initial data set,
with outermost apparent horizon boundary $\partial M$ consisting of a
single future apparent horizon component.
Suppose further that $\phi$ is a smooth function,
strictly positive away from
$\partial M$ and satisfying
\eqref{10} and \eqref{14}, and that $\theta_+$
satisfies \eqref{15}.
If $\frac{1}{2}\leq b <\frac{l+1}{2}$ then
there exists a smooth solution $h$ of the generalized
Jang equation \eqref{7}, satisfying \eqref{11}, and such that the following
estimates hold in a neighborhood of $\partial M$
\begin{align}\label{189}
\begin{split}
\alpha^{-1}\tau^{1-2b}+\beta^{-1} &\leq  h \leq \alpha\tau^{1-2b}
+\beta\text{ }\text{ }\text{ if }\text{ }\text{ }\frac{1}{2}< b<\frac{l+1}{2},\\
-\alpha^{-1}\log\tau+\beta^{-1} &\leq h \leq -\alpha\log\tau+\beta
\text{ }\text{ }\text{ if }\text{ }\text{ }b=\frac{1}{2},
\end{split}
\end{align}
for some positive constants $\alpha$ and $\beta$.
\end{theorem}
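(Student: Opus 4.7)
The plan is to obtain $h$ as the subsequential limit of the solutions $h_\delta$ supplied by Proposition \ref{prop4} as $\delta\to 0$. For each $\delta>0$, Proposition \ref{prop4} gives a smooth solution of the $\delta$-regularized generalized Jang equation \eqref{eq-deltaModifiedJang} which takes the finite boundary value $T_\delta$ on $\partial M$, decays to $0$ at spatial infinity, and satisfies the uniform two-sided bound \eqref{186} with constants $\alpha,\beta$ independent of $\delta$. Since $T_\delta\to\infty$, the lower estimate in \eqref{186} forces the $h_\delta$ to blow up at the horizon in the limit, which is exactly what we need.

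The first step is to extract a subsequence $\delta_i\to 0$ along which $\Sigma_{\delta_i}=\{t=h_{\delta_i}(x)\}$ converges smoothly on compact subsets of $M\setminus\partial M$ to a graph $\Sigma=\{t=h(x)\}$. On any such compact set, $\phi_{\delta_i}=(\tau+\delta_i)^b\widetilde\phi$ is bounded above and below with bounds independent of $\delta_i$, so the norms $|\log\phi_{\delta_i}|_{C^{2,\alpha}}$ are uniformly controlled. Consequently the parametric estimates of Theorem \ref{thm-ParametricEstimates} and the Harnack inequality of Theorem \ref{thm-HarnackInequality} apply uniformly in $i$, exactly as in the passage $\delta\to 0$ at the end of the proof of Proposition \ref{prop1}. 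This yields smooth subsequential convergence to a solution $h$ of the generalized Jang equation \eqref{7} on $M\setminus\partial M$.

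The second step is to pass the estimates to the limit. Since the constants in \eqref{186} do not depend on $\delta$, fixing any point with $\tau(x)>0$ and letting $\delta_i\to 0$ yields
\begin{equation*}
\alpha^{-1}\tau^{1-2b}+\beta^{-1}\leq h(x)\leq \alpha\tau^{1-2b}+\beta
\end{equation*}
when $\tfrac12<b<\tfrac{l+1}{2}$, and the corresponding logarithmic bound when $b=\tfrac12$. This is \eqref{189}. The lower bound in particular implies $h(x)\to+\infty$ as $x\to\partial M$, so $h$ is a genuine blow-up solution, not merely a bounded one. The asymptotic decay \eqref{11} at spatial infinity follows from the standard barrier construction at infinity used in Section \ref{sec4}, applied uniformly in $\delta_i$ and passed to the limit.

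The main obstacle is the uniform (in $\delta$) compactness required for the first step: one must know that the parametric $C^{3,\alpha}$ estimates and Harnack inequality genuinely survive the degeneration $\phi_\delta\to\phi$, which is only legitimate away from $\partial M$ because the degeneracy in \eqref{7} where $\phi$ vanishes is precisely what makes the boundary behavior subtle. Fortunately, the dependence on $|\log\phi|_{C^{2,\alpha}}$ and $|F|_{C^{1,\alpha}}$ recorded in Theorems \ref{thm-ParametricEstimates} and \ref{thm-HarnackInequality} (with $F=0$ here) is local and controlled uniformly on each compact piece of $M\setminus\partial M$, so the convergence argument of Proposition \ref{prop1} carries over verbatim. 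Once this is in hand, everything else is a direct limit of the uniform bounds of Proposition \ref{prop4}, and Theorem \ref{thm4} then follows by combining Theorem \ref{thm5} (for the asymptotics \eqref{16a}) with Theorem \ref{thm6} (for the asymptotics \eqref{16b}), together with the obvious extension to the case of several future and past horizon components.
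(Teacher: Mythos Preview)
Your proposal is correct and follows essentially the same approach as the paper: obtain $h$ as a subsequential limit of the $h_\delta$ from Proposition~\ref{prop4} using the parametric estimates and Harnack inequality uniformly away from $\partial M$, then pass the $\delta$-independent bounds \eqref{186} to the limit to get \eqref{189}. The paper's proof is simply a terser version of what you wrote, citing the convergence argument of Theorem~\ref{thm5} (which in turn cites Proposition~\ref{prop1}) rather than spelling out the compactness mechanism.
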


\begin{proof}
By Proposition \ref{prop4}, for each $\delta>0$ there exists a
solution $t=h_{\delta}$ of boundary value problem \eqref{184},
\eqref{185}. As in the proof of Theorem \ref{thm5}, there is a subsequence
$\delta_{i}\rightarrow 0$ such that these graphs converge smoothly
away from the boundary to a solution $t=h$ of the generalized Jang
equation \eqref{7}, which
satisfies \eqref{11}. Since the constants $\alpha$ and $\beta$ in
\eqref{186}
are independent of $\delta$, the limit $h$ satisfies \eqref{189}.
\end{proof}

\begin{proof}[Proof of Theorem \ref{thm4}] This follows from
Theorems \ref{thm5} and \ref{thm6} if the boundary has one future
apparent horizon component. Similar arguments hold if the boundary has a single past apparent horizon
component.

Now consider the case of multiple components in which some components belong to
the category (1) and some belong to the category (2). The asymptotics of (1) arise from taking the limit as
$\varepsilon\rightarrow 0$ to obtain a blow-up solution, and then taking the limit as $\delta\rightarrow 0$. The asymptotics of (2)
arise from taking the limit as $\varepsilon\rightarrow 0$ to obtain a solution with finite boundary values \eqref{185}, and then taking the limit
as $\delta\rightarrow 0$ to obtain the blow-up solution. Moreover the relevant constructions may be performed in a neighborhood of each component,
so that this process of letting $\varepsilon\rightarrow 0$ first, and then taking the limit $\delta\rightarrow 0$, may be carried out
to obtain the desired result.
\end{proof}

We now give an example to show that the nice asymptotics described
in Theorems \ref{thm5} and \ref{thm6} can fail if $b=\frac{l+1}{2}$.
In particular, we will exhibit a solution of the generalized Jang equation
which sticks to the cylinder.

\begin{example}\label{ex1}
Let us consider the exterior region of the Schwarzschild spacetime
with metric
\begin{equation*}\label{190}
-\left(1-\frac{2m}{r}\right)dt^{2}+\left(1-\frac{2m}{r}\right)^{-1}dr^{2}
+r^{2}d\sigma^{2},
\end{equation*}
where $d\sigma^{2}$ is the round metric on $\mathbb{S}^{2}$. Let $t=f(r)$
be a radial graph, with induced metric
\begin{equation*}\label{191}
g=\left(\left(1-\frac{2m}{r}\right)^{-1}
-\left(1-\frac{2m}{r}\right)f'^{2}\right)dr^{2}+r^{2}d\sigma^{2}.
\end{equation*}
As is calculated in \cite{BrayKhuri}, the second fundamental form of
the graph is given by
\begin{equation*}\label{192}
k_{ij}=\frac{\phi\nabla_{ij}f+\phi_{i}f_{j}+\phi_{j}f_{i}}
{\sqrt{1+\phi^{2}|\nabla f|^{2}}},
\end{equation*}
where $\phi=\sqrt{1-\frac{2m}{r}}$ and the covariant derivatives are
calculated with respect to the
metric $g$. Thus $(M=\mathbb{R}^{3}-B_{2m}(0),g,k)$ forms an initial
data set for which the graph $t=f(r)$ is a solution of the generalized Jang
equation. We choose a solution $t=f(r)$ such that the
function $f$ is smooth up to the boundary $r=2m$. In particular,
$f(2m)<\infty$ and we may consider this as an example of a
Jang graph `sticking' to the cylinder.
For such an $f$,
we note that
\begin{equation*}\label{193}
g_{11}=\left(1-\frac{2m}{r}\right)^{-1}-\left(1-\frac{2m}{r}\right)f'^{2}
\sim\left(1-\frac{2m}{r}\right)^{-1}.
\end{equation*}
Therefore the distance to the boundary is given by
\begin{equation*}\label{194}
\tau=\int_{2m}^{r}\sqrt{g_{11}}\sim \sqrt{1-\frac{2m}{r}}.
\end{equation*}
This implies that $\phi\sim \tau$, or rather $b=1$.
We now compute the  null expansion of the coordinate spheres
with respect to the initial data metric $g$. A standard formula
\cite{BrayKhuri} yields
\begin{equation*}\label{195}
H_{S_{r}}=\frac{2\sqrt{g^{11}}}{r}\sim \sqrt{1-\frac{2m}{r}},
\end{equation*}
and the trace of the initial data $k$ over the coordinate spheres is given by
\begin{equation*}\label{196}
Tr_{S_{r}}k=-\frac{\phi\gamma^{ij}\Gamma_{ij}^{1}f'}
{\sqrt{1+\phi^{2}|\nabla f|^{2}}}=
\frac{2}{r}\frac{\phi g^{11}f'}{\sqrt{1+\phi^{2}g^{11}f'^{2}}}
\sim \left(1-\frac{2m}{r}\right)^{\frac32}.
\end{equation*}
It follows that
\begin{equation*}\label{197}
\theta_{\pm}=H_{S_{r}}\pm Tr_{S_{r}}k \sim \sqrt{1-\frac{2m}{r}}.
\end{equation*}
In terms of the notation used above, we have
$\theta_{\pm}\sim \tau$, and in particular,
$l=1$.   $\hfill\Box$
\end{example}

\bibliographystyle{amsplain}

\begin{thebibliography}{99}

\bibitem{AnderssonMetzger} L. Andersson, J. Metzger, \emph{The area of horizons and the trapped region}, Comm. Math. Phys.,
\textbf{290} (2009), no. 3, 941–-972.

\bibitem{anderssonmetzger} L. Andersson, J. Metzger, \emph{Curvature estimates for stable marginally trapped surfaces}, J. Differential Geom.,
\textbf{84} (2010), 231-265.

\bibitem{AnderssonEichmairMetzger} L. Andersson, M. Eichmair, J. Metzger, \emph{Jang's equation and its applications to marginally trapped surfaces},
Contemporary Mathematics, Complex Analysis and Dynamical Systems IV: Part 2. General Relativity, Geometry, and PDE (2011), 13-46.

\bibitem{Bray} H. Bray, \emph{Proof of the Riemannian Penrose conjecture using the positive mass theorem},
J. Differential Geom., \textbf{59} (2001), 177-267.

\bibitem{BrayKhuri} H. Bray, M. Khuri,
\emph{A Jang equation approach to the Penrose inequality}, Discrete Contin. Dyn. Syst., \textbf{27} (2010),
741-766. arXiv:0910.4785

\bibitem{BrayKhuri1} H. Bray, M. Khuri, \emph{P.D.E's which imply the Penrose conjecture}, Asian J. Math.,
\textbf{15} (2011), no. 4, 557-610. arXiv:0905.2622

\bibitem{DisconziKhuri} M. Disconzi, M. Khuri, \emph{On the Penrose inequality for charged black holes},
Class. Quantum Grav., \textbf{29} (2012), 245019. arXiv:1207.5484


\bibitem{EichmairMetzger} M. Eichmair, J. Metzger, \emph{Jenkins-Serrin type results for the Jang equation},
preprint, 2012, arXiv:1205.4301.

\bibitem{Galloway} G. Galloway, \emph{Rigidity of marginally trapped surfaces and the topology of black holes},
Commun. Anal. Geom., \textbf{16} (2008), 217-229.

\bibitem{GallowaySchoen} G. Galloway, R. Schoen, \emph{A generalization of Hawking's black hole topology theorem to higher dimensions},
Comm. Math. Phys., \textbf{266} (2006), no. 2, 571-576.

\bibitem{GilbargTrudinger} D. Gilbarg, N. Trudinger, \emph{Elliptic Partial Differential Equations of Second Order},
Springer-Verlag, New York, 1998.

\bibitem{HoffmanSpruck} D. Hoffman, J. Spruck, \emph{Sobolev and isoperimetric inequalities for riemannian submanifolds},
Communications on Pure and Applied Mathematics, \textbf{27} (1974), 715-727.

\bibitem{HuiskenIlmanen} G. Huisken, T. Ilmanen, \emph{The inverse mean curvature flow and the Riemannian Penrose
inequality}, J. Differential Geom., \textbf{59} (2001), 353-437.

\bibitem{Jang} P.-S. Jang, \emph{On the positivity of energy in General Relativity},
J. Math. Phys., \textbf{19} (1978), 1152-1155.

\bibitem{KhuriWeinstein} M. Khuri, G. Weinstein, \emph{Rigidity in the positive mass theorem with charge}, J. Math. Phys., \textbf{54} (2013), 092501. arXiv:1307.5499

\bibitem{MalecOMurchadha} E. Malec, N. \'{O} Murchadha, \emph{The Jang equation, apparent horizons, and the Penrose inequality},
Class. Q. Grav., \textbf{21} (2004), 5777-5787.

\bibitem{Metzger} J. Metzger, \emph{Blowup of Jang's equation at outermost marginally trapped surfaces}, Comm. Math. Phys.,
\textbf{294} (2010), no. 1, 61-–72.

\bibitem{SchoenYau0} R. Schoen, S.-T. Yau, \emph{On the positive mass conjecture in general relativity}, Comm. Math. Phys.,
\textbf{65} (1979), no. 1, 45-76.

\bibitem{SchoenYau} R. Schoen, S.-T. Yau, \emph{Proof of the positive mass theorem II},  Comm. Math. Phys., \textbf{79} (1981), no. 2, 231-260.

\bibitem{Witten} E. Witten, \emph{A new proof of the positive energy theorem},
Comm. Math. Phys., \textbf{80} (1981), no. 3, 381-402.


\end{thebibliography}

\end{document}